\newtheorem{thm}{Theorem}[section]
\newtheorem{cor}[thm]{Corollary}
\newtheorem{prop}[thm]{Proposition}
\newtheorem{claim}[thm]{Claim}
\newtheorem{con}[thm]{Conjecture}
\theoremstyle{definition}
\newtheorem{defn}[thm]{Definition}
\theoremstyle{remark}
\newtheorem{rem}[thm]{Remark}
\newtheorem{exa}[thm]{Example}
\numberwithin{equation}{section}
\DeclareMathOperator{\diam}{diam}
\DeclareMathOperator{\card}{card}
\DeclareMathOperator{\sgn}{sgn}
\DeclareMathOperator{\is}{is}
\DeclareMathOperator{\ac}{ac}
\DeclareMathOperator{\DP}{dp}
\newcommand{\Sp}[1]{\operatorname{Sp}(#1)}
\newcommand{\ve}{\varepsilon}
\newcommand{\NN}{\mathbb{N}}
\newcommand{\RR}{\mathbb{R}}
\newcommand{\mfb}{\mathfrak{B}}
\newcommand{\mfu}{\mathfrak{U}}
\newcommand{\mfo}{\mathfrak{O}}
\begin{document}

\title[Ordinal spaces]{Ordinal spaces}

\author{Karsten Keller}

\address{Institute of Mathematics, University of L\"{u}beck, Ratzeburger Allee 160, 23562 L\"{u}beck,  Germany}

\email{keller@math.uni-luebeck.de}

\author{Evgeniy Petrov}

\address{Institute of Applied Mathematics and Mechanics of NAS of Ukraine,  Dobrovolskogo str. 1, 84100 Slovyansk, Ukraine}

\email{eugeniy.petrov@gmail.com}

\subjclass{Primary 54E99; Secondary 54E25, 54E35}

\keywords{ordinal space, embedding in Euclidean space, metric space, semimetric space}

\begin{abstract}
Ordinal data analysis is an interesting direction in machine learning. It mainly deals with data for which only the relationships \mbox{`$<$', `$=$', `$>$'} between pairs of points are known. We do an attempt of formalizing structures behind ordinal data analysis by introducing the notion of ordinal spaces on the base of a strict axiomatic approach. For these spaces we study general properties as isomorphism conditions, connections with metric spaces, embeddability in Euclidean spaces, topological properties etc.
\end{abstract}

\maketitle

\section{Introduction}

\noindent
Sometimes it is much easier to gather information about comparisons between objects or persons than to measure exact values related to them, leading to data given on an ordinal scale. Ordinal data usually collected from persons by observation, testing, or questionnaires arise in social, educational, psychological and behavioral sciences, and in governmental and business sectors. Moreover, there are many reasons to investigate given metric data on the pure ordinal level.

Ordinal data analysis has been recently used in machine learning, see, e.g.,~\cite{LK14, LK15,SWS16,CC07,KL171, KL172}. Several real-life motivations for studying machine learning tasks in a setting of ordinal distance information are described in~\cite{KL172}: humans are better and more reliable in assessing dissimilarity on a relative scale than on an absolute one; there are situations where ordinal distance information is readily available, but the underlying dissimilarity function is unknown; there are several applications where actual dissimilarity values between objects can be collected, but it is clear to the practitioner that these values only reflect a rough picture and should be considered informative only on an ordinal scale level.

An example of relative assessments by a person may be the following. Imagine a cyclist with neither a watch nor a speedometer who has ridden all pairwise distances between a set of some localities $A_1,\ldots A_n$. He cannot assign a numerical value to the pair of localities in the form of time or distance. The only thing that the cyclist has is a general impression of the form ``the distance between the localities $A_i$ and $A_j$ is less than, equal to or larger than the distance between localities $A_k$ and $A_l$''. Such assessments generate an ``ordinal structure''.

The aim of our  paper is a formalization of structures of this type and a demonstration that such structures have nontrivial theoretical properties and deserve to be studied. For these purposes we mainly use methods of metric geometry and graph theory.

A straightforward formalization of ``ordinal structure'' leads us to the following definition. For a nonempty set $X$ consider a map

\[
\delta\colon X\times X\times X\times X\longrightarrow \{\mbox{`$<$'},\mbox{`$>$'},\mbox{`$=$'}\}
\]

and the following conditions on $\delta$ for all $x,y,u,v,z,w \in X$:

\begin{itemize}
  \item [(i)] $\delta (x,y,x,y)=\mbox{`$=$'}$;
  \item [(ii)] $\delta (x,y,z,w)=\delta (y,x,z,w)=\delta (x,y,w,z)$;
  \item [(iii)] $\delta (x,y,z,w)=-\delta (z,w,x,y) \mbox{ where } -\mbox{`$<$'}=\mbox{`$>$',\,} -\mbox{`$>$'}=\mbox{`$<$', and } -\mbox{`$=$'}=\mbox{`$=$'}$;
  \item [(iv)] $\delta (x,y,u,v)=\delta (u,v,z,w)=\mbox{`$=$' implies }\delta (x,y,z,w)=\mbox{`$=$'}$;
  \item [(v)] $\delta (x,y,u,v)=\mbox{`$<$' and }\delta (u,v,z,w)\in\{\mbox{`$<$', `$=$'}\}\mbox{ implies }\delta (x,y,z,w)=\mbox{`$<$'}$;
  \item [(vi)] $\delta (x,y,u,v)\in\{\mbox{`$<$', `$=$'}\}\mbox{ and }\delta (u,v,z,w)=\mbox{`$<$' implies }\delta (x,y,z,w)=\mbox{`$<$'}$;
  \item [(vii)] $\delta(x,x,z,w)= \mbox{`$<$' if } z\neq w  \mbox{ and } \delta(x,x,z,w)=\mbox{`$=$' if } z=w$.
\end{itemize}

\begin{defn}\label{d1.1}
$(X,\delta)$ is called an {\em ordinal space} if (i)--(vii) are satisfied.
\end{defn}
Note that the second  equality in (ii) can be omitted since it follows from the first one and condition (iii). In some cases, for convenience, instead of $\delta(x,y,z,w)=\mbox{`$<$',`$=$',`$>$'}$ we will write $\delta(x,y)<, =, >\delta(z,w)$, respectively.

Like the axioms of a metric space all conditions in Definition~\ref{d1.1} express a natural understanding of relations between pairs of points. Condition (i) is similar to the axiom $d(x,x)=0$, (ii) is analogous to the symmetry condition $d(x,y)=d(y,x)$, (iii)  is analogous to relations between real numbers $(a<b)\Leftrightarrow (b>a)$. Conditions (iv), (v), (vi) express transitivity of the relations `$=$' and `$<$' for real numbers and eventually condition (vii) states that the distance from a point to itself is less than a distance between two different points, similarly to the axiom of a metric space $(d(x,y)=0)\Leftrightarrow (x=y)$.

According to axioms (i)--(iv) the relation `$=$' is an equivalence relation on the set of all unordered pairs $\{x,y\}$, $x,y \in X$. Since we know all relations between all pairs of points in $X$, the set of all equivalence classes $(X\times X)/\mbox{`$=$'}$ can be linearly ordered with respect to these relations. This observation leads us to the following equivalent definition of an ordinal space.
\begin{defn}\label{d2.3}
An ordinal space is an ordered triplet $(X,L,\delta)$, where $X$ is a set, $L$ is a linearly ordered set with minimal element $0$ and $\delta$ is a \textbf{surjective} mapping $\delta\colon X\times X \to L$ such that for any $x,y \in X$ the following two axioms hold:
\begin{itemize}
  \item [(i)] $\delta(x,y)=\delta(y,x)$;
  \item [(ii)] $(\delta(x,y)=0)\Leftrightarrow(x=y)$.
\end{itemize}
\end{defn}

Note that according to this definition if $(X,L,\delta)$ is an ordinal space, then $X\neq\varnothing$, as there is $x\in X$ with $\delta(x,x)=0$.

\begin{rem}\label{r1.3}
In Definitions~\ref{d1.1} and~\ref{d2.3} we identify $(X\times X)/\mbox{`$=$'}$ with $L$.
\end{rem}

In what follows we shall use both designations $(X,\delta)$ and $(X,L,\delta)$ for ordinal spaces depending on context.

The introduced concept of ordinal space is closely connected to such concepts as ordinal scaling, multidimensional scaling and ranking, which have a lot of applications, see, e.g.,~\cite{AWC07, BG05, JN11, K64, QY04, RF06, Sh62, Sh66, WJJ13}. The presence of these relationships and, thus, the existence of potential applications, makes the study of ordinal spaces important.

Observe that in some sense an ordinal space is a natural generalization of a semimetric space. Recall that a \emph{semimetric} on a set $X$ is a function $d\colon X\times X\to \mathbb{R}^+$, $\mathbb{R}^+=[0,\infty)$, such that $d(x,y)=d(y,x)$ and $(d(x,y)=0)\Leftrightarrow (x=y)$ for all $x,y \in X$. A pair $(X,d)$, where  $d$  is a semimetric on $X$, is called a \emph{semimetric space} (see, for example, \cite[p.~7]{Bl}). Note  also that a semimetric $d$ is a \emph{metric} if, in addition, the \emph{triangle inequality} $d(x,y)\leqslant d(x,z)+d(z,y)$ holds for all $x, y, z \in X$. A metric is an \emph{ultrametric} if we have the \emph{ultrametric inequality}
$ d(x,y)\leqslant \max \{d(x,z),d(z,y)\}$
instead of the weaker triangle one.

\begin{exa}
Every semimetric space $(X,d)$ may be considered as an ordinal space $(X,\delta)$ if we define $\delta$ for all $x,y,z,w \in X$ in the following way:
\begin{equation}\label{eq1}
\delta(x,y,z,w)=
\begin{cases}
\text{`$<$'}, &\text{if} \ \, d(x,y)<d(z,w);\\
\text{`$=$'}, &\text{if} \ \,d(x,y)=d(z,w);\\
\text{`$>$'}, &\text{if} \ \,d(x,y)>d(z,w).
\end{cases}
\end{equation}
\end{exa}

We shall say that a semimetric space $(X,d)$  has the \emph{ordinal type} $(X, \delta)$ if  equality~(\ref{eq1}) holds for all $x,y,z,w \in X$. In this case we also shall call $(X,d)$ a \emph{realization} of  $(X,\delta)$.

It is easy to see that for every ordinal space $(X,L,\delta)$ with $\card(L)>\mathfrak{c}$ there does not exist a realization which is a semimetric space. Let us call an
ordinal space \emph{semimetric-like} if such a realization exists. It is clear that an ordinal space $(X,L,\delta)$ is semimetric-like if $L\! \setminus \! \{0\}$ is similar to some subset of $\mathbb R^+ \! \setminus \! \{0\}$ (with the usual order) or equivalently to some subset of the real line. (For the concept of similarity of ordered sets, compare the text below Definition \ref{d3.1}.) In~\cite{A77} it was shown that every continuous linearly ordered set $\widetilde{L}$ with no least and greatest element in which some countable subset is dense is order-isomorphic to the real line. Thus $(X,L,\delta)$ is semimetric-like if $L \! \setminus \! \{0\}$ is similar to some subset of such an $\widetilde{L}$. (For the definition of a continuous linearly ordered set, see~\cite{Ha05}.)

The following simple proposition shows that for every semimetric-like ordinal space there exists a realization which is a metric space.

\begin{prop}\label{p1}
Let $(X,L,\delta)$ be a semimetric-like ordinal space. Then there exists a metric $d$ on $X$ such that the metric space  $(X,d)$ has the ordinal type $(X,\delta)$.
\end{prop}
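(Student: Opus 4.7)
The plan is to take any given semimetric realization $d_0$ of $(X,\delta)$ and post-compose it with a strictly increasing function that fixes $0$ and is bounded both above and below on the positive reals, so that the triangle inequality becomes automatic while the ordinal type is preserved.

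First, the semimetric-like hypothesis furnishes a semimetric $d_0 \colon X\times X\to \mathbb{R}^+$ realizing $(X,\delta)$. I would then introduce the function
\[
\phi(t)=\begin{cases}0,&t=0,\\[2pt] 1+\dfrac{t}{1+t},& t>0,\end{cases}
\]
which is strictly increasing on $[0,\infty)$, satisfies $\phi(0)=0$, and sends every positive real number into the interval $(1,2)$. Define $d(x,y):=\phi(d_0(x,y))$.

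Next I would verify that $d$ is a metric and realizes $(X,\delta)$. Symmetry and the identity-of-indiscernibles property follow immediately from the corresponding properties of $d_0$ together with $\phi^{-1}(0)=\{0\}$. For the triangle inequality at points $x,y,z\in X$, the cases where $x=y$ or $z\in\{x,y\}$ are trivial because the right-hand side already exceeds or equals the left. In the remaining case, when $x,y,z$ are pairwise distinct, one has $d(x,z),\,d(z,y)\in(1,2)$ while $d(x,y)<2$, hence $d(x,z)+d(z,y)>2>d(x,y)$. Finally, since $\phi$ is strictly increasing with $\phi(0)=0$, the relations `$<$', `$=$', `$>$' between two values of $d$ agree with those between the corresponding values of $d_0$, so $(X,d)$ realizes $(X,\delta)$.

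There is no serious obstacle: the argument reduces to a bounded monotone rescaling of the semimetric, and the only real choice is the auxiliary function $\phi$. Any strictly increasing $\phi\colon[0,\infty)\to[0,\infty)$ with $\phi(0)=0$ and $\phi((0,\infty))\subseteq[a,2a)$ for some $a>0$ would serve equally well (for instance $\phi(t)=1+\arctan t$ on $t>0$), since only the order-theoretic behavior of $\phi$ and the uniform upper/lower bounds on its positive values are used.
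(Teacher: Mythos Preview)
Your proof is correct and follows essentially the same approach as the paper: both force all nonzero distances into an interval of the form $(a,2a)$ via a strictly increasing map, so that the triangle inequality is automatic while the order relations are preserved. The paper does this by order-embedding $L\setminus\{0\}$ into $(0,\varepsilon)$ and translating by a large $a$, whereas you compose a given semimetric realization with the explicit function $\phi$ mapping $(0,\infty)$ into $(1,2)$; the underlying idea is identical, and your version has the minor advantage of making the constants explicit rather than appealing to ``sufficiently small $\varepsilon$ and sufficiently large $a$''.
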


\begin{proof}
Since $(X,L,\delta)$ is semimetric-like and the real line is similar to an open segment $(0,\ve)$, $\ve>0$, there exists
an injective mapping $f\colon L\!\setminus\!\{0\} \to (0,\ve)$ preserving the linear order on $L\!\setminus\!\{0\}$. For every $x,y \in X$ define
\[
d(x,y)=
\begin{cases}
a+f(l), &\text{if} \ \;  x\neq y;\\
0, &\text{if} \ \, x=y;
\end{cases}
\]
where $l\in L\!\setminus\!\{0\}$ is such that $\delta(x,y)=l$ and $a$ is a positive real. It is clear that  the triangle inequality in $(X,d)$ is satisfied for every $x,y,z \in X$ and~(\ref{eq1}) holds for all $x,y,z,w \in X$ with sufficiently small $\ve$ and sufficiently large $a$.
\end{proof}

The concepts of metric and semimetric spaces are natural and well investigated. In some parts of this paper we are interested in studying connections between ordinal spaces and semimetric spaces. It is natural to study these connections on the class of semimetric-like ordinal spaces. Thus, in what follows under {\bf ordinal spaces} $(X,\delta)$ we only understand {\bf semimetric-like ordinal spaces}.

In 1992 Matthews~\cite{Mat} generalized metrics to partial metrics admitting not necessarily zero self-distances. Later, O'Neill~\cite{Neil} in 1995 considered partial metrics by admitting negative distances.
In 1999 Heckmann~\cite{Heck} extended the structure from~\cite{Mat} by omitting the axiom of small self-distances ($d(x,x)\leqslant d(x,y)$ for all $x,y \in (X,d)$). The another well-known generalization is so called quasi-metric spaces which are obtained from metric spaces by omitting the symmetry axiom ($d(x,y)=d(y,x)$ for all $x,y \in (X,d)$).
In this connection, if necessary,  more general classes of spaces than ordinal spaces can be considered in order to have an embedding of such spaces in ordinal structure.

The paper is organized as follows. In Section~\ref{iso} we consider two approaches for defining isomorphism between ordinal spaces based on the different Definitions~\ref{d1.1} and~\ref{d2.3}. This isomorphism is also expressed in terms of weak similarities which form a special class of mappings between semimetric spaces, see Propositions~\ref{p23} and~\ref{p231}.

In Section~\ref{balls} we give an approach for defining the set of balls in ordinal spaces, which is based on the concept of cuts of linearly ordered sets, and classify balls with respect to their ``appearance'' in realizations of ordinal spaces.

Section~\ref{topology} is devoted to topological properties of ordinal spaces. We give some conditions under which an appropriate set of balls in the ordinal space $X$ forms a base of a topology on $X$, see Proposition~\ref{p7.3}. In Proposition~\ref{p314} we show a way of defining a topology on a set $X$ using the structure of the ordinal space $(X,\delta)$. We also describe some connections of ordinal spaces with uniform spaces. It is worth emphasizing here the specificity of the statements in Theorems~\ref{t7.1} and~\ref{p34}. The interesting point of these statements is that it is not necessary to have a triangle inequality for all triplets of points in semimetric spaces in order to have some topological properties for these spaces.

The set of all balls $\mathbf B_X$ of an ordinal space $X$ can be considered as a poset $(\mathbf B_X, \subseteq)$ with the partial order defined by the inclusion $\subseteq$. The Hasse diagram $\mathcal{H}(\mathbf B_X)$ of the poset $(\mathbf B, \subseteq)$ for finite $X$ is a directed graph which for small $X$ gives a clear visual representation of the ball structure of $X$. In Section~\ref{Hasse} we give a criterium for isomorphism of Hasse diagrams $\mathcal{H}(\mathbf B_X)$ and $\mathcal{H}(\mathbf B_Y)$ of finite ordinal spaces $X$ and $Y$, see Theorem~\ref{t25}. It is also established that an isomorphism of the ordinal spaces $X$ and $Y$ implies an isomorphism of the corresponding Hasse diagrams $\mathcal{H}(\mathbf B_X)$ and $\mathcal{H}(\mathbf B_Y)$, but not vise versa, Proposition~\ref{p26}.

In Section~\ref{ballnum} we put forward two hypotheses about maximal and minimal number of balls in finite ordinal spaces.  We conjecture that the sequence of maximal numbers of balls coincides with the sequence A263511 from~\cite{oeis} and the sequence of minimal numbers of balls in $(X,\delta)$ (with some restrictions on $\delta$) coincides with the sequence of triangular numbers.

Section~\ref{rline} is devoted to embeddings of ordinal spaces in the real line. We define a special type of points enumeration in an ordinal space $X$, the so called enumerations with majorization property. Proposition~\ref{p101} claims that for spaces embeddable in the real line such enumeration always exists. The converse assertion is a new interesting conjecture. In Theorem~\ref{t10} we establish that for an ordinal space $X$ with $|X|\leqslant 4$ the existence of an enumeration with majorization property is necessary and sufficient for embeddability in $\RR^1$. Propositions~\ref{p6.8} and~\ref{p210} describe some classes of ordinal spaces embeddable in $\RR^1$.

Section~\ref{hdes} considers embeddings of ordinal spaces in higher dimensional Euclidean spaces. We start this section with a hypothesis that an analog of the famous Menger's theorem holds for ordinal spaces, i.e., that in order to verify that a finite ordinal space is embeddable in $\RR^n$ it is enough to verify the embeddability in $\RR^n$ of each of its $n+3$ point subsets. In Theorem~\ref{t22} we establish that if an ordinal space $X$ is embeddable in $\RR^2$, then the number of its diametrical pairs does not exceed the number of points in $X$ and describe extremal embeddings. In Proposition~\ref{p10} we show that any ordinal space $X$ with $|X|=n+1$ is irreducibly embeddable in $\RR^n$.

In Section~\ref{dist} we propose an analog of the Gromov--Hausdorff distance for the case of ordinal spaces with a fixed finite number of points.

\section{Isomorphism properties of ordinal spaces}\label{iso}
\noindent Based on Definition~\ref{d1.1}, it is natural to define isomorphisms between ordinal spaces as follows.
\begin{defn}\label{d3.1} Ordinal spaces $(X,\delta_X)$  and $(Y,\delta_Y)$ are said to be \emph{isomorphic} if there exists a bijective mapping $\Phi \colon X \to Y$ such that
\begin{equation}\label{e2.1*}
\delta_X(x,y,z,w)=\delta_Y(\Phi(x),\Phi(y),\Phi(z),\Phi(w))
\end{equation}
for all $x,y,z,w \in X$.
\end{defn}

Recall that two linearly ordered sets $(L_1,<_1)$ and $(L_2,<_2)$ are isomorphic or \emph{similar} if there exists a bijection $\Psi\colon L_1\to L_2$ such that for arbitrary $x,y\in L_1$ there holds $(x<y)\Leftrightarrow (\Psi(x)<\Psi(y))$, see, e.g.,~\cite[p.~34--35]{Ha05}. In this case $\Psi$ is called an isomorphism.

Definition~\ref{d2.3} leads us to the following definition of isomorphism.

\begin{defn}\label{d3.1*} Ordinal spaces $(X,L_X,\delta_X)$  and $(Y,L_Y,\delta_Y)$ are said to be  \emph{isomorphic} if $L_X$ and  $L_Y$ are isomorphic as linearly ordered sets and there exists a bijective mapping $\Phi\colon X \to Y$ such that
\begin{equation}\label{e2.2*}
\Psi(\delta_X(x,y))=\delta_Y(\Phi(x),\Phi(y))
\end{equation}
for all $x,y\in X$, where $\Psi\colon L_X\to L_Y$ is an isomorphism of the linearly ordered sets $L_X$ and $L_Y$.
\end{defn}

We have already mentioned the equivalence of isomorphy concepts in Definitions~\ref{d1.1} and~\ref{d2.3}. It is easy to see that isomorphy according to Definition~\ref{d3.1*} implies  isomorphy according to Definition~\ref{d3.1}, but the converse implication we now show is not so evident.

Let $(X,\delta_X)$  and $(Y,\delta_Y)$ be ordinal spaces isomorphic in the sense of Definition~\ref{d3.1} and let $(X,L_X,\delta_X)$  and $(Y,L_Y,\delta_Y)$ be designations of these spaces in the sense of Definition~\ref{d2.3} (see Remark~\ref{r1.3}).  Let us first show that $L_X$ and $L_Y$ are similar. Let $\delta_{1} \in L_X$ and let $\{x,y\}$ be any pair such that $\delta_X(x,y)= \delta_{1}$. Define the mapping $\Psi\colon L_X\to L_Y$ by
\begin{equation}\label{e2.11}
\Psi(\delta_1) = \delta_2
\end{equation}
where $\delta_2=\delta_Y(\Phi(x), \Phi(y))$.

Suppose that $\delta_{X_1}, \delta_{X_2} \in (L_X,<_X)$, $\delta_{X_1}<_X\delta_{X_2}$,  and let $\delta_X(x,y)= \delta_{X_1}$, $\delta_X(z,w)=\delta_{X_2}$. According to the definition of $\Psi$, we get
$$
\Psi(\delta_{X_1}) = \delta_{Y_1}, \, \  \Psi(\delta_{X_2}) = \delta_{Y_2}
$$
with  $\delta_{Y_1}=\delta_Y(\Phi(x),\Phi(y))$ and $\delta_{Y_2}=\delta_Y(\Phi(z),\Phi(w))$. It follows from~(\ref{e2.1*}) that $\delta_Y(\Phi(x),\Phi(y))<\delta_Y(\Phi(z),\Phi(w))$. Hence $\delta_{Y_1}<_Y\delta_{Y_2}$. The monotonicity of $\Psi$ implies its injectivity. The surjectivity easily follows from the surjectivity of $\delta_X$ and $\delta_Y$. Thus the isomorphism of the ordinal spaces $(X,L_X,\delta_X)$  and $(Y,L_Y,\delta_Y)$ implies a similarity of $L_X$ and $L_Y$. It suffices to note only that equality~(\ref{e2.2*}) follows from~(\ref{e2.11}).

The \emph{spectrum} of a semimetric space $(X,d)$ is the set
$\operatorname{Sp}(X)=\{d(x,y)\colon x,y \in  X\}$.

\begin{defn}\label{d4.5}
Let  $(X,d_X)$ and $(Y,d_Y)$ be semimetric spaces. A bijective mapping $\Phi\colon X\to Y$ is called a \emph{weak similarity} if there exists a strictly increasing bijection $f\colon \Sp{X}\to \Sp{Y}$ such that the equality
\begin{equation}\label{e2.1}
f(d_X(x,y))=d_Y(\Phi(x),\Phi(y))
\end{equation}
holds for all $x$, $y\in X$.
\end{defn}

If $\Phi \colon X \to Y$ is a weak similarity, then we say that $X$ and $Y$ are \emph{weakly similar} with the realization $(f, \Phi)$.  The notion of weak similarity was introduced in~\cite{DP2} in a slightly different form.

\begin{prop}\label{p23}
Let $(X,d_X)$ and $(Y,d_Y)$ be semimetric spaces.
The ordinal types $(X,\delta_X)$ and $(Y,\delta_Y)$ are isomorphic if and only if  $(X,d_X)$ and $(Y,d_Y)$ are weakly similar.
\end{prop}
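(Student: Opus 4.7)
The plan is to verify both implications by directly translating between the quaternary relation $\delta$ and the numerical distance $d$. The forward direction ($\Leftarrow$) is essentially immediate: given a weak similarity $(f,\Phi)$, the strict monotonicity of $f$ means that for any $x,y,z,w\in X$ we have $d_X(x,y) \lessgtr d_X(z,w)$ if and only if $f(d_X(x,y)) \lessgtr f(d_X(z,w))$, which by (\ref{e2.1}) is the same as $d_Y(\Phi(x),\Phi(y)) \lessgtr d_Y(\Phi(z),\Phi(w))$; substituting into~(\ref{eq1}) gives the identity required in Definition~\ref{d3.1}, so $\Phi$ realizes the ordinal isomorphism.

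For the nontrivial direction ($\Rightarrow$), suppose $\Phi\colon X\to Y$ is an isomorphism in the sense of Definition~\ref{d3.1}. My plan is to define $f\colon \Sp(X)\to \Sp(Y)$ by
\[
f(d_X(x,y)) := d_Y(\Phi(x),\Phi(y)),
\]
and check the four properties: well-definedness, injectivity, surjectivity, and strict monotonicity. Well-definedness is where one actually uses the $\delta$-condition with the relation `$=$': if $d_X(x,y)=d_X(x',y')$ then $\delta_X(x,y,x',y')=\text{`$=$'}$, so by~(\ref{e2.1*}) the same equality holds on the $Y$-side, giving $d_Y(\Phi(x),\Phi(y))=d_Y(\Phi(x'),\Phi(y'))$. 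Injectivity follows by the identical argument applied to $\Phi^{-1}$, and surjectivity is immediate because $\Phi$ is a bijection and thus the pairs $(\Phi(x),\Phi(y))$ exhaust $Y\times Y$, so their distances exhaust $\Sp(Y)$. Strict monotonicity is the exact analogue using the relation `$<$' in place of `$=$'.

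I do not anticipate any real obstacle; the only step that requires a moment of care is well-definedness, because $f$ is defined on the set of distance \emph{values} but constructed via representing \emph{pairs}, and one must invoke axiom~(\ref{e2.1*}) on the diagonal-type quadruple $(x,y,x',y')$ to see that different representatives of the same value of $d_X$ are sent to the same value of $d_Y$. Once this is observed, the remaining verifications are symmetric and formal, and combining them yields that $(f,\Phi)$ is the desired realization of a weak similarity.
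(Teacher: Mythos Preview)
Your proposal is correct and follows essentially the same route as the paper: both directions are handled by translating between $d$-comparisons and $\delta$-values via~(\ref{eq1}), and in the nontrivial direction the map $f$ is defined exactly as you do, with the equivalences in~(\ref{e2.1*}) supplying well-definedness, monotonicity, and (via bijectivity of $\Phi$) surjectivity. The only cosmetic difference is that the paper derives injectivity of $f$ from strict monotonicity rather than by invoking $\Phi^{-1}$, and leaves well-definedness implicit in the phrase ``a pair (possibly not unique)''.
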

\begin{proof}
Let $(X,d_X)$ and $(Y,d_Y)$ be weakly similar with the realization $(f, \Phi)$. Let us prove that $\Phi$ is an isomorphism of the ordinal types $(X,\delta_X)$ and $(Y, \delta_Y)$. According to~(\ref{e2.1}) we have
\begin{equation}\label{e22}
\begin{split}
&f(d_X(x,y)) = d_Y(\Phi(x), \Phi(y)),\\
&f(d_X(z,w)) = d_Y(\Phi(z), \Phi(w))
\end{split}
\end{equation}
for all $x, y, z, w \in X$.
Since $f$ is strictly increasing, equalities~(\ref{e22}) imply the equivalences
\begin{equation}\label{e23}
\begin{split}
&d_X(x,y)<d_X(z,w) \ \Leftrightarrow \  d_Y(\Phi(x),\Phi(y)) <d_Y(\Phi(z), \Phi(w)),\\
&d_X(x,y)=d_X(z,w) \ \Leftrightarrow \ d_Y(\Phi(x),\Phi(y)) =d_Y(\Phi(z), \Phi(w)),\\
&d_X(x,y)>d_X(z,w) \ \Leftrightarrow \  d_Y(\Phi(x),\Phi(y)) >d_Y(\Phi(z), \Phi(w)).
\end{split}
\end{equation}
Using~(\ref{eq1}) and~(\ref{e2.1*}) we obtain the desired implication.

Conversely, let $(X,d_X)$ and $(Y,d_Y)$ be semimetric spaces and let their ordinal types $(X,\delta_X)$ and $(Y,\delta_Y)$ be isomorphic, with isomorphism $\Phi\colon X\to Y$. Let us prove that $\Phi$ is a weak similarity. According to~(\ref{e2.1*}) and to~(\ref{eq1}) we obtain that relations~(\ref{e23}) hold again.

For all $r\in \Sp{X}$ define
\begin{equation}\label{e4}
  f(r)=d_Y(\Phi(x_0), \Phi(y_0)) \in \Sp{Y}
\end{equation}
where $x_0, y_0 \in X$ is a pair (possibly not unique) such that $d_X(x_0,y_0)=r$. Let us show that $f$ is a strictly increasing bijection. The fact that $f$ is strictly increasing follows form the first equivalence in~(\ref{e23}). Hence $f$ is injective.

To prove the surjectivity suppose that  $s\in \Sp{Y}$. Consequently there exist $u,v\in Y$ such that $d_Y(u,v)=s$. Since $\Phi$ is a bijection, we have $u=\Phi(u_0)$, $v=\Phi(v_0)$ for some $u_0, v_0\in X$. Let $s_0=d_X(u_0,v_0)\in \Sp{X}$. According to~(\ref{e4}) we have $f(s_0)=s$. The surjectivity of $f$ is established. Which completes the proof.
\end{proof}

It is easy to see that the next proposition is an equivalent formulation of Proposition~\ref{p23}, only, of course,
under the restriction given after Proposition \ref{p1} for the whole paper.

\begin{prop}\label{p231}
The ordinal spaces $(X,\delta_X)$ and $(Y,\delta_Y)$ are isomorphic if and only if any their realizations $(X,d_X)$ and $(Y,d_Y)$ are weakly similar.
\end{prop}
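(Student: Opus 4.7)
The plan is to reduce Proposition~\ref{p231} directly to Proposition~\ref{p23}, using the standing convention fixed right after Proposition~\ref{p1} that from now on every ordinal space is semimetric-like, so that realizations as semimetric spaces always exist. The statement of Proposition~\ref{p231} is essentially a translation of Proposition~\ref{p23}: the latter is phrased with the semimetric spaces as primary data and their ordinal types as derived objects, while the former reverses the roles, taking the ordinal spaces as primary and their realizations as the derived objects. Once this translation is spelled out, no genuine new content is required.

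For the forward implication, I would assume that $(X,\delta_X)$ and $(Y,\delta_Y)$ are isomorphic ordinal spaces and let $(X,d_X)$ and $(Y,d_Y)$ be arbitrary realizations. By the very definition of a realization (equation~(\ref{eq1}) holds for all $x,y,z,w \in X$), the ordinal type of $(X,d_X)$ is precisely $(X,\delta_X)$, and the ordinal type of $(Y,d_Y)$ is precisely $(Y,\delta_Y)$. Since these ordinal types are isomorphic by hypothesis, Proposition~\ref{p23} applied to the semimetric spaces $(X,d_X)$ and $(Y,d_Y)$ yields that they are weakly similar, which is exactly the conclusion.

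For the converse, I would use the semimetric-like standing assumption to ensure that the claim is not vacuous: there exist realizations $(X,d_X)$ of $(X,\delta_X)$ and $(Y,d_Y)$ of $(Y,\delta_Y)$. By the hypothesis, any such pair of realizations is weakly similar. Invoking the other direction of Proposition~\ref{p23}, the ordinal types of $(X,d_X)$ and $(Y,d_Y)$ are isomorphic. Since these ordinal types coincide with $(X,\delta_X)$ and $(Y,\delta_Y)$ respectively, the desired isomorphism of ordinal spaces follows.

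There is essentially no obstacle to overcome; the only point that deserves mention is the quantifier interpretation. Reading ``any realizations'' as a universal quantifier, the forward direction needs to verify weak similarity for every choice of realizations (handled uniformly by the argument above), and the converse direction needs the existence of at least one pair of realizations on which to apply Proposition~\ref{p23} (supplied by the semimetric-like restriction, or more concretely by the metric construction in Proposition~\ref{p1}). Hence the proof is a one-step reduction, which matches the author's own remark that this proposition is ``an equivalent formulation'' of Proposition~\ref{p23}.
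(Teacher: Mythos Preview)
Your proposal is correct and matches the paper's own treatment: the paper does not give a separate proof but simply remarks that Proposition~\ref{p231} is an equivalent formulation of Proposition~\ref{p23} under the standing semimetric-like restriction, which is exactly the reduction you spell out. Your explicit handling of the quantifier and the existence of realizations makes the equivalence precise.
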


\section{What is a ball in an ordinal space?}\label{balls}

\noindent Recall that an open ball of radius $r$ with center $x_0$ in a general semimetric  space $(X,d)$ is defined by
$$
B_r(x_0)=\{x\in X \colon d(x,x_0)<r\}
$$
and a closed ball by
$$
B_r[x_0]=\{x\in X \colon d(x,x_0)\leqslant r\}
$$
where $r>0$ is some positive real and $x_0\in X$. Denote by $\widetilde{\mathbf B}_{X}$ and $\bar{\mathbf B}_{X}$ the sets of all open and closed balls in a semimetric space $(X,d)$, respectively.

The problem in defining balls in ordinal spaces is that we cannot refer to the real numbers as in the semimetric case.
Therefore we want to use the concept of a \emph{cut} in a linearly ordered set $S$ meaning that $S=A\cup B$ and each element in $A$ is less than each element in $B$. We use the designation $(A,B)$ for such a cut and call $A$ the initial segment of $S$ and $B$ the complementary final segment of $S$.

For an ordinal space $(X,L,\delta)$ with $c\in X$ we shall call the set $$\Sp{c}=\{\delta(c,x) \,  | \,  x\in X\}\subseteq L$$ the \emph{spectrum at the point} $c$.
\begin{defn}\label{d2.5}
Let $(X,L,\delta)$ be an ordinal space and
let $(A,B)$ be a cut of the set $\Sp{c}$, $c\in X$, with $A\neq\varnothing$.
The set
$$
B_c=B[c](A,B)=\{x\in X \, | \, \delta(c,x) \in A\}
$$
is called a \emph{ball} at the point $c$ corresponding to the cut $(A,B)$. We denote by $\mathbf B_X$ the set of all balls in the ordinal space $X$.
\end{defn}

\begin{rem}
Note that we do not introduce concepts of open and closed balls for ordinal spaces as they are adopted for metric spaces. The reason is that it is impossible to achieve a compatibility of these concepts in ordinal spaces and their realizations. Below we just classify balls with respect to their appearance in realizations of ordinal spaces. Note also that according to Definition~\ref{d2.5} every one-point set is a ball in $(X,L,\delta)$.
\end{rem}

\begin{claim}
Let $(X,d)$ be any realization of an ordinal space $(X,\delta)$. Then for every open ball $B_r(c)$ or closed one $B_r[c]$ in $(X,d)$ there exists a cut $(A,B)$ of the set $\Sp{c}$ such that $B[c](A,B)=B_r(c)$ or $B[c](A,B)=B_r[c]$, respectively.
\end{claim}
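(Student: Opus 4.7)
The plan is to exploit the fact that a realization $(X,d)$ of $(X,\delta)$ induces, for each fixed center $c$, a natural order-preserving bijection between the spectrum $\Sp{c}\subseteq L$ and the set of realized distances $D_c:=\{d(c,x):x\in X\}\subseteq\RR^+$. Indeed, because $(X,d)$ realizes $(X,\delta)$, condition~(\ref{eq1}) gives $\delta(c,x)=\delta(c,y)$ iff $d(c,x)=d(c,y)$, so the assignment $\phi_c\colon\Sp{c}\to D_c$ sending $\delta(c,x)$ to $d(c,x)$ is well-defined and bijective, and the $<$-clause of~(\ref{eq1}) shows it is strictly increasing. Every element of $\Sp{c}$ thus corresponds unambiguously to a realized distance from $c$.

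Given a positive real $r$, define
\[
A=\{\delta(c,x)\in\Sp{c}:d(c,x)<r\},\qquad B=\Sp{c}\setminus A
\]
for the open-ball case, and
\[
A'=\{\delta(c,x)\in\Sp{c}:d(c,x)\leqslant r\},\qquad B'=\Sp{c}\setminus A'
\]
for the closed-ball case. I would first observe that $A$ and $A'$ are nonempty because $0=\delta(c,c)$ lies in both (since $d(c,c)=0<r$). Next, using the order-preserving bijection $\phi_c$, I would verify the cut property: if $l\in A$ and $m\in B$, choose $x,y$ realizing them; then $d(c,x)<r\leqslant d(c,y)$ forces $l=\delta(c,x)<\delta(c,y)=m$ by~(\ref{eq1}), and analogously for $(A',B')$.

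Finally, I would check the equalities $B[c](A,B)=B_r(c)$ and $B[c](A',B')=B_r[c]$ in both directions. The inclusion $\supseteq$ is immediate from the construction. For $\subseteq$, suppose $x\in B[c](A,B)$, so $\delta(c,x)\in A$; by definition of $A$, there is some $y$ with $\delta(c,y)=\delta(c,x)$ and $d(c,y)<r$. Well-definedness of $\phi_c$ then gives $d(c,x)=d(c,y)<r$, hence $x\in B_r(c)$. The closed-ball case is identical with $<$ replaced by $\leqslant$.

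I do not expect any serious obstacle here: the whole statement rests on the single observation that~(\ref{eq1}) forces $\delta(c,\cdot)$ and $d(c,\cdot)$ to have identical fibres and to induce the same linear order on the respective image sets, so the cut one writes down in the obvious way automatically produces the required ball.
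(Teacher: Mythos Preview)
Your proof is correct and follows essentially the same approach as the paper's own proof. Both define $A$ as the set of $\delta$-values $\delta(c,x)$ for which $d(c,x)<r$ (respectively $\leqslant r$), verify the cut property via~(\ref{eq1}), and check the resulting ball coincides with $B_r(c)$ (respectively $B_r[c]$); your explicit introduction of the order-isomorphism $\phi_c$ is a clean organizing device, but the underlying argument is the same.
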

\begin{proof}
In fact, let $c\in X$ and let $r>0$ be a positive real. Let $A=\{\delta(c,x) \, |\, x\in B_r(c)\}$ and $B=\{\delta(c,x) \, |\, x\in X\setminus B_r(c)\}$. Then $\Sp{c}=A\cup B$. As $d(c,c)=0<r$, we have $c\in B_r(c)$ and hence $\delta(c,c)\in A$ implying $A\neq \varnothing$. Now consider $a\in A$ and $b\in B$. Choose $x\in B_r(c)$ and $y\in X \setminus B_r(c)$ with $a=\delta(c,x)$ and $b=\delta(c,y)$. Then $d(x,c)<r$ and $d(y,c)\geqslant r$ implying that $d(x,c)<d(y,c)$ and thus that $d(c,x)<d(c,y)$; it follows that $\delta(c,x,c,y)=\mbox{`$<$'}$ and hence that $\delta(c,x)<\delta(c,y)$, i.e., $a<b$. We conclude that $(A,B)$ is a cut of $\Sp{c}$. For every $x\in X$ we have $\delta(c,x)\in A$ if and only if $x\in B_r(c)$. This shows that $B[c](A,B)=B_r(c)$. The construction of the cut $(A,B)$ in the case of $B_r[c]$ is similar, with $A=\{\delta(c,x) \, | \, x\in B_r[c]\}$.
\end{proof}

Naturally, the following question arises: Can one see from the structure of $(X,\delta)$ whether the set $B[c](A,B)$ is an open or closed ball in an \textbf{arbitrary} realization $(X,d)$? As it will be seen below, in some cases the given set $B[c](A,B)$ is a ball of some type for all realizations $(X,d)$, see also Example~\ref{ex4.1}. In other cases it depends on the structure of the realization $(X,d)$.

Recall some definitions from the theory of ordered sets, see~\cite{Ha05}. A cut $(A,B)$ is called \emph{proper}, if $A$ and $B$ are both non-empty. Let $(S, \leqslant)$ be a linearly ordered set. A proper cut $(A, B)$ in $S$ is said to be of type
\begin{quote}
$(1, 1)$, if $A$ has a last and B has a first element,\\
$(\infty, 1)$, if $A$ has no last and B  has a first element,\\
$(1, \infty)$, if $A$ has a last and B has no first element,\\
$(\infty, \infty)$ if $A$ has no last and B has no first element.\
\end{quote}

Again, let $(X,d)$ be an arbitrary realization of an ordinal space $(X,\delta)$ and let $(A,B)$ be a cut of the set $\Sp{c}$, $c\in X$. The reader can verify that the following holds:

\begin{enumerate}
  \item [(1)] If the linearly ordered set $\Sp{c}\!\setminus\! \{0\}$  has a least element, then $\{c\} \in \widetilde{\mathbf B}_{X}, \bar{\mathbf B}_{X}$.
  \item [(2)] If the cut $(A,B)$ has a type $(1,1)$ or $(\infty,\infty)$, then $B_c\in \widetilde{\mathbf B}_{X}, \bar{\mathbf B}_{X}$.
  \item [(3)] If there exists a point $c \in X$ such that the set $\Sp{c}$ has a last element, then $B[c](\Sp{c},\varnothing) =X \in \widetilde{\mathbf B}_{X}$, $\bar{\mathbf B}_{X}$.
\end{enumerate}

Now consider the other types of cuts of the linearly ordered set $\Sp{c}$.
\begin{itemize}
  \item [(4)] If the linearly ordered set $\Sp{c}\setminus \{0\}$  has no least element, then the point $\{c\}$ in some realization $(X,d)$ can be both a closed and an open ball or can not be a ball.
  \item [(5)] If $(A,B)$ has a type $(\infty,1)$, then the corresponding ball $B_c$ of some realization $(X,d)$ can be both closed and open or only open.
  \item [(6)] If $(A,B)$ has a type $(1,\infty)$, then the corresponding ball $B_c$ of some realization $(X,d)$ can be both closed and open or only closed.
  \item [(7)] If there exists a point $c \in X$ such that the set $\Sp{c}$ has no last element, then the set $X$ in some realization can be a ball, both open and closed, or can not be a ball.
\end{itemize}

In this connection an open problem arises:
\emph{Let $(X,\delta)$ be an ordinal space. For every ball $B_c\in \mathbf B_X$ belonging to the one of the cases \emph{(4)--(7)} assign an admissible type of ball described in these cases. Does there exist a realization $(X,d)$ of $(X,\delta)$ such that the assigned types of balls coincide with types of balls (open, closed, not a ball) in $(X,d)$? }

\begin{exa}\label{ex4.1}
Let $(x_n)_{n=1}^{\infty}$ and $(y_n)_{n=1}^{\infty}$ be
 strictly decreasing sequences of positive real numbers such that
$\lim_{n\to\infty}x_n=0$ and
$\lim_{n\to\infty}y_n=p>0$. Let $X=  \{x_0,x_1,\ldots ,x_n,\ldots \}$ and
$Y=  \{y_0, y_1,\ldots ,y_n,\ldots \}$, where $x_0=0$ and $y_0=p$. Define metrics $d_X$ and $d_Y$ by the rules
\begin{equation}\label{eq4.1}
\begin{split}
d_X(x_i,x_j)&=
\begin{cases}
\max\{x_i, x_j\},  &x_i\neq x_j;\\
0, &x_i= x_j;
\end{cases}\\
d_Y(y_i,y_j)&=
\begin{cases}
\max\{y_i, y_j\},  &y_i\neq y_j;\\
0, &y_i= y_j.
\end{cases}
\end{split}
\end{equation}
It can be proved directly that $(X,d_X)$, $(Y,d_Y)$ are ultrametric spaces.
The functions $\Phi$ and $f$ defined by
\begin{equation}\label{eq4.2}
f(0)=  0,\quad f(x_i)=  y_i, \quad \Phi(x_0)=  y_0,\quad
\Phi(x_i)=  y_i,\quad \, \ i=1,2,\ldots
\end{equation}
are bijective and, moreover, $f$ is increasing. It follows from~(\ref{eq4.1}) and~(\ref{eq4.2}) that equation~(\ref{e2.1}) holds for all $x_i, x_j \in X$. Consequently, we have that $(X,d_X)$ and $(Y,d_Y)$ are weakly similar with the realization $(f,\Phi)$.
The spaces $X$ and $Y$ are not homeomorphic,
because $X$ has the limit point $x_0$ but $Y$ is discrete. Nevertheless, according to Proposition~\ref{p23} their ordinal types $(X,\delta)$ and $(Y,\delta)$ are isomorphic.
One can see that
$$
\widetilde{\mathbf B}_{X}=\bar{\mathbf B}_{X}=\{\{x_i\}\, | \, i\in \mathbb N^+\}\cup\{\{x_0\}\cup\{x_k, x_{k+1},\ldots \} \, | \,  k\in \mathbb N^+\}
$$
and
$$
\widetilde{\mathbf B}_{Y}=\bar{\mathbf B}_{Y}=\{\{y_i\} \, |\, i\in \mathbb N^+\}\cup \{\{y_0\}\cup\{y_k, y_{k+1},\ldots \} \, | \,  k\in \mathbb N^+\}\cup\{\{y_0\}\}.
$$
Here $\NN^+=\{1,2,3,\ldots\}$.
In other words, there exists an ordinal space $(X,\delta)$ with nonhomeomorphic realizations $(X,d_X)$ and $(Y,d_Y)$ and with different sets of balls $\widetilde{\mathbf B}_{X}=\bar{\mathbf B}_{X}$ and $\widetilde{\mathbf B}_{Y}=\bar{\mathbf B}_{Y}$ (speaking about realization $(Y,d_Y)$ we identify $y_i$ with $x_i$).
\end{exa}

\section{Topological properties}\label{topology}

\subsection{Bases of topology}
In this subsection we give a condition under which an appropriate set of balls in an ordinal space forms a base of some topology, which is natural in view of the following basic topological property of metric spaces:
\begin{prop}\label{p7}
The set of open balls of a metric space $X$ is the base of a topology on $X$ which is called the metric topology.
 \end{prop}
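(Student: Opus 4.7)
The plan is to verify the two standard axioms for a collection of subsets to be a base of a topology on $X$: (B1) every point of $X$ lies in some member of the collection, and (B2) whenever $x$ lies in the intersection of two members, there is a third member containing $x$ and contained in that intersection. Once (B1) and (B2) are established, the topology generated by the base consists exactly of arbitrary unions of open balls, which gives the metric topology.

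First I would check (B1): given any $x \in X$, the ball $B_1(x) = \{y \in X : d(x,y) < 1\}$ contains $x$ because $d(x,x) = 0 < 1$. For (B2), suppose $x \in B_{r_1}(c_1) \cap B_{r_2}(c_2)$ and set
\[
r = \min\{r_1 - d(x,c_1),\, r_2 - d(x,c_2)\}.
\]
Both quantities in the minimum are strictly positive by the assumption that $x$ lies in both balls, so $r > 0$. I claim $B_r(x) \subseteq B_{r_1}(c_1) \cap B_{r_2}(c_2)$: indeed, for $y \in B_r(x)$, the triangle inequality gives
\[
d(y,c_i) \leqslant d(y,x) + d(x,c_i) < r + d(x,c_i) \leqslant r_i
\]
for $i = 1,2$, so $y$ lies in both balls.

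The one place where we genuinely use the metric (as opposed to semimetric) structure is the triangle inequality in (B2); this is why the analogous statement for ordinal spaces (taken up in Section~\ref{topology}) will require extra conditions, and why Theorems~\ref{t7.1} and~\ref{p34} are interesting — they circumvent the full triangle inequality. I do not anticipate any real obstacle here; the proof is completely routine and the only care needed is to ensure the chosen radius $r$ in (B2) is strictly positive, which follows directly from $x$ being in the interior of each ball.
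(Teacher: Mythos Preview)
Your proof is correct and is exactly the standard argument the paper has in mind: the paper does not spell out a proof of Proposition~\ref{p7} but explicitly notes that the standard proof is based on Proposition~\ref{p7.1}, whose conditions (a) and (b) are precisely your (B1) and (B2). Your verification of these two conditions, including the choice $r=\min\{r_1-d(x,c_1),\,r_2-d(x,c_2)\}$ and the use of the triangle inequality, matches the approach the paper later generalizes in Theorem~\ref{t7.1}.
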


Note that a standard proof of Proposition~\ref{p7} is based on the following well-known proposition which we need below.

\begin{prop}\label{p7.1}
Let $\mathcal B$ be a family of subsets of a set $X$ such that the following conditions hold:
\begin{itemize}
  \item [$(a)$] every point $x\in X$ belongs to some $B\in \mathcal B$,
  \item [$(b)$] if $x\in B_1\cap B_2$ and $B_1, B_2\in \mathcal B$, then there exists a $B_3 \in \mathcal B$ such that $x\in B_3 \subseteq B_1\cap B_2$.
\end{itemize}
Then $\mathcal B$ is a base of some (uniquely defined) topology on the set $X$.
\end{prop}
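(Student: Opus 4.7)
The plan is to define the candidate topology explicitly and then verify the axioms. Let $\tau$ be the family of all subsets $U\subseteq X$ such that for every $x\in U$ there exists $B\in\mathcal{B}$ with $x\in B\subseteq U$; equivalently (and the equivalence is the first routine check), $\tau$ consists of the empty set together with all unions of members of $\mathcal{B}$. One has to verify the three topology axioms for $\tau$ and show that $\mathcal{B}\subseteq\tau$, which together yield that $\mathcal{B}$ is a base of $\tau$.

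First I would check that $\varnothing,X\in\tau$: the empty set is a vacuous union, while $X\in\tau$ follows from condition $(a)$, since $X=\bigcup_{B\in\mathcal{B}}B$. Closure under arbitrary unions is immediate from the equivalent description of $\tau$: a union of unions of elements of $\mathcal{B}$ is again such a union. The main content lies in closure under finite intersections, and it suffices to treat the case of two sets $U_1,U_2\in\tau$. For any $x\in U_1\cap U_2$ choose $B_1,B_2\in\mathcal{B}$ with $x\in B_i\subseteq U_i$; condition $(b)$ supplies a $B_3\in\mathcal{B}$ with $x\in B_3\subseteq B_1\cap B_2\subseteq U_1\cap U_2$. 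Hence every point of $U_1\cap U_2$ lies in some $B\in\mathcal{B}$ contained in $U_1\cap U_2$, so $U_1\cap U_2\in\tau$. Since trivially every $B\in\mathcal{B}$ is a member of $\tau$ (take the one-term union), $\mathcal{B}$ is a base for $\tau$.

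For uniqueness, suppose $\tau'$ is any topology on $X$ having $\mathcal{B}$ as a base. Then by the definition of ``base'', a set $U\subseteq X$ lies in $\tau'$ if and only if it is a union of elements of $\mathcal{B}$, which is precisely the condition defining $\tau$; hence $\tau'=\tau$.

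The only real obstacle is the finite-intersection step, and condition $(b)$ is tailored exactly to overcome it; the remaining verifications are formalities. No appeal to any specific feature of ordinal spaces is needed, so the argument is purely set-theoretic and uses only $(a)$ and $(b)$.
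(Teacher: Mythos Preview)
Your proof is correct and is the standard argument. Note that the paper does not actually give a proof of this proposition: it is introduced as a ``well-known proposition which we need below'' and is used without justification, so there is no in-paper proof to compare against; your argument is precisely the routine verification one would expect.
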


For the set of all accumulation points of a semimetric space $X$ we use the designation $\ac(X)$. The following theorem is a generalization of Proposition~\ref{p7} to semimetric spaces.

\begin{thm}\label{t7.1}
Let $(X,d)$ be a semimetric space. If for every $z\in \ac(X)$ there exists an $\ve>0$ such that the inequality
\begin{equation}\label{e7.3}
      d(x,y)\leqslant d(x,z)+d(z,y)
\end{equation}
holds  for all $x\in X$ and all $y\in B_{\ve}(z)$, then $\widetilde{\mathbf B}_X$ is a base of some topology on $X$.
\end{thm}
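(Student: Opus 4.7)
The plan is to verify the two hypotheses of Proposition~\ref{p7.1} for the family $\mathcal{B} = \widetilde{\mathbf{B}}_X$. Condition $(a)$ is trivial, since for any $x \in X$ and any $r > 0$ we have $x \in B_r(x)$ thanks to $d(x,x) = 0$. The substance of the argument lies entirely in condition $(b)$.

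To verify $(b)$, I would fix two open balls $B_{r_1}(c_1), B_{r_2}(c_2) \in \widetilde{\mathbf{B}}_X$ and a point $x$ in their intersection, and split on whether $x$ is isolated. If $x \notin \ac(X)$, then by definition there exists $\rho > 0$ with no point of $X$ other than $x$ satisfying $d(x,\cdot) < \rho$, so $B_\rho(x) = \{x\}$; since $x$ itself lies in the intersection, $\{x\} \subseteq B_{r_1}(c_1) \cap B_{r_2}(c_2)$ and we take $B_3 = B_\rho(x)$.

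If instead $x \in \ac(X)$, the hypothesis supplies some $\varepsilon > 0$ such that $d(u,v) \leq d(u,x) + d(x,v)$ for every $u \in X$ and every $v \in B_\varepsilon(x)$. I would then set
\[
r = \min\bigl\{\varepsilon,\; r_1 - d(c_1,x),\; r_2 - d(c_2,x)\bigr\},
\]
which is strictly positive because $d(c_i,x) < r_i$ for $i = 1,2$. For any $y \in B_r(x)$ one has $d(x,y) < \varepsilon$, so the restricted triangle inequality applies with $u = c_i$ and $v = y$, giving $d(c_i,y) \leq d(c_i,x) + d(x,y) < d(c_i,x) + (r_i - d(c_i,x)) = r_i$. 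Symmetry of the semimetric then yields $y \in B_{r_i}(c_i)$, so $B_r(x) \subseteq B_{r_1}(c_1) \cap B_{r_2}(c_2)$, as required.

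The only delicate point is matching the roles of the variables correctly: the weakened triangle inequality constrains only the second argument $v$ to lie in $B_\varepsilon(x)$, so it is essential to place the moving point $y$ in that slot and the fixed centers $c_i$ (on which we have no control) in the other. Once this observation is made, the rest is a routine mimicking of the standard metric proof of Proposition~\ref{p7}.
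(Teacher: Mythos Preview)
Your proof is correct and follows essentially the same approach as the paper: both verify Proposition~\ref{p7.1} by splitting on whether the point in the intersection is isolated, and in the accumulation-point case both shrink the ball so that the restricted triangle inequality applies with the fixed centers in the unrestricted slot. The only cosmetic difference is that you take $r=\min\{\varepsilon, r_1-d(c_1,x), r_2-d(c_2,x)\}$ in one step, whereas the paper first sets $\varepsilon_0=\min\{r_1-d(b_1,z), r_2-d(b_2,z)\}$ and then chooses $\varepsilon\leqslant\varepsilon_0$ satisfying~\eqref{e7.3}.
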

\begin{proof}
To prove this theorem it suffices to establish conditions $(a)$ and $(b)$ of Proposition~\ref{p7.1} for $\mathcal B = \widetilde{\mathbf B}_X$. Condition $(a)$ evidently holds. Let $z\in B_1\cap B_2$, $B_1, B_2 \in \widetilde{\mathbf B}_X$. If $z$ is an isolated point, then $B_3$ can be chosen as $B_3=\{z\}$. Otherwise, suppose $z\in \ac(X)$ and $B_1=B_{r_1}(b_1)$ and $B_2=B_{r_2}(b_2)$ for some $b_1,b_2\in X$ and $r_1,r_2\in {\mathbb R}$. Let $\ve_0=\min\{r_1-d(b_1,z), r_2-d(b_2,z)\}$ and let  $\ve \leqslant \ve_0$ be a positive real such that~(\ref{e7.3}) holds for all $x\in X$ and all $y$ with $d(z,y)<\ve$. Then
$$
d(b_i,y)\leqslant d(b_i,z)+d(z,y)<d(b_i,z)+\ve \leqslant d(b_i,z)+r_i-d(b_i,z)=r_i.
$$
Consequently, $y\in B_i$, $i=1,2$, hence $y\in B_{\ve}(z)\subseteq B_1\cap B_2$.
\end{proof}

\begin{prop}\label{p7.3}
Let $(X,\delta)$ be an ordinal space and let $\mathcal B$ be some set of balls in $(X,\delta)$.
If there exists a semimetric $d$ on $X$, with the property that for every $z\in \ac(X,d)$ there exists an $\ve>0$ such that inequality~\emph{(\ref{e7.3})} holds  for all $x\in X$ and all $y\in B_{\ve}(z)$ and $\mathcal B = \widetilde{\mathbf B}_{(X,d)}$, then $\mathcal B$ is a base of some topology on $(X,\delta)$.
\end{prop}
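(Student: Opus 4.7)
The plan is to reduce the proposition directly to Theorem~\ref{t7.1} applied to the semimetric space $(X,d)$ supplied by the hypothesis. First I would note that the condition imposed on $(X,d)$ in Proposition~\ref{p7.3}—namely that for every $z\in \ac(X,d)$ there is an $\ve>0$ making inequality~(\ref{e7.3}) valid for all $x\in X$ and all $y\in B_{\ve}(z)$—is literally the hypothesis of Theorem~\ref{t7.1}. So Theorem~\ref{t7.1} applies to $(X,d)$ without modification and yields that $\widetilde{\mathbf B}_{(X,d)}$ is a base of some topology on the set $X$.

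Next I would observe that the underlying set of the ordinal space $(X,\delta)$ and the underlying set of the semimetric space $(X,d)$ are the same set $X$, so a base of a topology on one is a base of a topology on the other; the ordinal structure $\delta$ does not interact with the topological verification at all. Combining this with the assumption $\mathcal B = \widetilde{\mathbf B}_{(X,d)}$ from the statement, one immediately deduces that $\mathcal B$ itself is a base of a topology on $X$.

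In other words, the proof is essentially a single line: apply Theorem~\ref{t7.1} to $(X,d)$ and transfer the conclusion through the assumed equality $\mathcal B = \widetilde{\mathbf B}_{(X,d)}$. There is no genuine obstacle. The only subtle point worth mentioning is that the semimetric $d$ need not be a realization of $(X,\delta)$ in the sense of \eqref{eq1}; the proposition only requires that $d$ exist on the same underlying set and that its open balls coincide with the members of $\mathcal B$ (which we think of as ordinal balls via Definition~\ref{d2.5}). The argument goes through regardless of whether the ordinal type of $(X,d)$ happens to be $(X,\delta)$, and it would be misguided to try to strengthen the proof by invoking the \textbf{Claim} before Example~\ref{ex4.1}, since no such invocation is needed.
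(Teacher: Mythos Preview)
Your proposal is correct and matches the paper's own proof, which simply says that the result follows directly from Proposition~\ref{p7.1} and Theorem~\ref{t7.1}. Your one-line reduction via Theorem~\ref{t7.1} together with the equality $\mathcal B=\widetilde{\mathbf B}_{(X,d)}$ is exactly the intended argument; the paper's additional mention of Proposition~\ref{p7.1} is just a nod to the mechanism underlying Theorem~\ref{t7.1} and not a separate ingredient.
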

\begin{proof}
The proof follows directly from Proposition~\ref{p7.1} and Theorem~\ref{t7.1}.
\end{proof}

\subsection{Connections with uniform spaces}

In this subsection it is shown that given an ordinal space $(X,\delta)$, the set $X$ can be equipped with
a topology in a natural way and, under an additional assumption, with an uniform structure. For an introduction to uniform spaces see, for example,~\cite{Is64} or~\cite[pp.~169--218]{B}.

Let $(X,L,\delta)$ be an ordinal space such that there is no least element in $L\setminus \{0\}$. Define a system $\mfu$ of subsets of $X\times X$ by $U\in \mfu$ iff there exists a cut $(A,B)$ of the set $L$ with $A\setminus \{0\}\neq \varnothing$, such that the implication
$$
(\delta(u,v)\in A)\Rightarrow ((u,v) \in U)
$$
holds for all $u,v \in X$.

It is clear that the following is valid:
\begin{align}
\mbox{$\Delta = \{(x,x)\, | \, x \in X\}\subseteq U$ for all $U\in \mfu$,}\tag{U1}\label{u1}\\
\mbox{If $U\in \mfu$ and $U\subset V \subseteq X\times X$, then $V\in \mfu$,}\tag{U2}\label{u2}\\
\mbox{If $U,V \in \mfu$, then $U\cap V \in \mfu$,}\tag{U3}\label{u3}\\
\mbox{If $U\in \mfu$, then $\{(y,x) \, | \, (x,y)\in U\}\in \mfu$.\tag{U4}\label{u4}}
\end{align}

Recall that a \emph{uniformity} on a set $X$ is a structure given by a non-empty set $\mfu$ of subsets of $X\times X$ which satisfies axioms  \eqref{u1}--\eqref{u4}, and axiom \eqref{b4} from Definition~\ref{d6.3} below with
$\mfu=\mfb$, see~\cite[p.~169]{B}. Also note that axioms \eqref{u1}--\eqref{u3} mean that $\mfu$ is a filter if $X\neq\varnothing$.

For $U\in \mfu$ and $x\in X$, let $U[x]=\{y\in X \, | \, (x,y) \in U\}$. Define the set $\mathfrak O$
of subsets $O$ of $X$ as follows: $O\in \mathfrak O$ iff for each $x\in O$ there exists some $U\in \mfu$ with $U[x]\subseteq O$.
\begin{prop}~\label{p314}
  $\mathfrak O$ is a topology on $X$.
\end{prop}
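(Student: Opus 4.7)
The plan is to verify the three axioms defining a topology: that $\varnothing$ and $X$ lie in $\mathfrak{O}$, that $\mathfrak{O}$ is closed under arbitrary unions, and that it is closed under finite intersections. The whole argument is a routine exercise in the definition of $\mathfrak{O}$ together with properties \eqref{u2} and \eqref{u3} of $\mathfrak{u}$; no property of the underlying ordinal space is needed beyond the fact, established just above the proposition, that $\mathfrak{u}$ satisfies \eqref{u1}--\eqref{u4}.

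First I would handle the two extremal sets. The empty set belongs to $\mathfrak{O}$ vacuously. For $X \in \mathfrak{O}$ I need to produce, for every $x \in X$, some $U \in \mathfrak{u}$ with $U[x] \subseteq X$; the latter inclusion is automatic, so it is enough to exhibit a single element of $\mathfrak{u}$. Taking $U = X \times X$ works, provided there is at least one admissible cut $(A,B)$ of $L$ with $A \setminus \{0\} \neq \varnothing$. The hypothesis that $L \setminus \{0\}$ has no least element in particular forces $L \setminus \{0\}$ to be non-empty, so the cut $(L, \varnothing)$ qualifies.

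Next I would treat arbitrary unions. Given a family $\{O_i\}_{i \in I} \subseteq \mathfrak{O}$ and a point $x \in \bigcup_{i \in I} O_i$, I pick some index $j$ with $x \in O_j$ and use the definition of $\mathfrak{O}$ to obtain $U \in \mathfrak{u}$ with $U[x] \subseteq O_j \subseteq \bigcup_{i} O_i$. Hence $\bigcup_i O_i \in \mathfrak{O}$.

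Finally, for finite intersections it suffices to treat two sets. Suppose $O_1, O_2 \in \mathfrak{O}$ and $x \in O_1 \cap O_2$. Choose $U_1, U_2 \in \mathfrak{u}$ with $U_i[x] \subseteq O_i$ for $i = 1, 2$. By \eqref{u3}, $U_1 \cap U_2 \in \mathfrak{u}$, and the straightforward identity $(U_1 \cap U_2)[x] = U_1[x] \cap U_2[x]$ shows that $(U_1 \cap U_2)[x] \subseteq O_1 \cap O_2$, so $O_1 \cap O_2 \in \mathfrak{O}$. Since this is a standard filter-of-entourages argument, I do not expect any serious obstacle; the only subtle point is the non-emptiness of $\mathfrak{u}$, which is why the hypothesis on $L \setminus \{0\}$ is worth pointing out explicitly.
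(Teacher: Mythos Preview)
Your proof is correct and follows the same overall scheme as the paper: verify $\varnothing,X\in\mathfrak O$, closure under unions, and closure under pairwise intersection. The only noticeable difference is in the intersection step. You invoke property \eqref{u3} directly, taking $U_1\cap U_2\in\mathfrak u$ and using the identity $(U_1\cap U_2)[x]=U_1[x]\cap U_2[x]$. The paper instead goes back to the cuts $(A_1,B_1)$ and $(A_2,B_2)$ associated with $U_1,U_2$, assumes without loss of generality $A_1\subset A_2$, and exhibits the ``basic'' entourage $\widetilde U_1=\{(u,v)\mid\delta(u,v)\in A_1\}$, which is contained in both $U_1$ and $U_2$. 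Your route is a touch cleaner, since \eqref{u3} has already been recorded; the paper's route is more hands-on and in effect reproves a special case of \eqref{u3}. Both arguments are valid and equally short.
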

\begin{proof}
It is clear that $\varnothing, X\in \mathfrak O$ and any union of elements of $\mathfrak O$ is an element of $\mathfrak O$. Suppose $O_1, O_2 \in \mfo$. Let us show that $O_1\cap O_2 \in \mfo$ and let $x\in O_1\cap O_2$. By definition there exist $U_1, U_2\in \mfu$ such that $U_1[x]\subset O_1, U_2[x]\subset O_2$. Let $(A_1,B_1)$, $(A_2, B_2)$ be the corresponding cuts for $U_1$ and $U_2$. Without loss of generality consider that $A_1\subset A_2$. Let
$$
\widetilde{U}_1=\{(u,v)\, | \, \delta(u,v)\in A_1\},\quad \widetilde{U}_2=\{(u,v)\, | \, \delta(u,v)\in A_2\}.
$$
It is clear that $\widetilde{U}_1, \widetilde{U}_2 \in \mfu$, $\widetilde{U}_1\subset U_1$, $\widetilde{U}_1\subset \widetilde{U}_2$, $\widetilde{U}_2\subset U_2$, i.e., $\widetilde{U}_1\subset U_1, U_2$. Hence $\widetilde{U}_1[x]\subset U_1[x], U_2[x]$. Since $\widetilde{U}_1[x]\subset U_1[x]\cap U_2[x]\subset O_1\cap O_2$,
we have $O_1\cap O_2 \in \mfo$.
\end{proof}

If $(X,L,\delta)$ is an ordinal space such that there exists a least element in $L\setminus \{0\}$, then it is natural
to consider the discrete topology on $X$.

\begin{defn}[{\protect\cite[p.~170]{B}}]\label{d6.3}
 A non-empty set $\mfb$ of subsets of $X\times X$ is a fundamental system of entourages of a uniformity on $X$ if and only if $\mfb$  satisfies the following axioms:
\begin{align}
\mbox{The intersection of two sets of $\mfb$ contains a set of $\mfb$,}\tag{B1}\label{b1}\\
\mbox{Every set of $\mfb$ contains the diagonal $\Delta$,}\tag{B2}\label{b2}\\
\mbox{For each $V\in \mfb$ there exists $V'\in \mfb$ such that $V'\subset V^{-1}$,}\tag{B3}\label{b3}\\
\mbox{For each $V\in \mfb$ there exists $W\in \mfb$ such that $W^2\subset V$.}\tag{B4}\label{b4}
\end{align}
The uniformity defined by $\mfb$ consists of all subsets of $X$ containing some $B\in\mfb$. (One easily sees that indeed a uniformity is defined.)
\end{defn}

We recall that if $W$ is a subset of $X\times X$, then the set of pairs $(x,y) \in X\times X$ such that $(x,z), (z,y)\in W$ for some $z\in X$ is denoted by $W^2$, and that the set of pairs $(x,y) \in X\times X$ such that $(y,x)\in V$ is denoted by $V^{-1}$.

For a metric space $(X,d)$ and, more generally, for a pseudometric space a fundamental system of entourages, hence a uniformity, is provided by the sets \begin{equation}\label{s61}
U_a=\{(x,y) \in X\times X\colon d(x,y)\leqslant a\}
\end{equation}
with $a>0$. One of the approaches for defining uniform structures is based on using systems of pseudometrics,
where a uniformity is given by the least upper bound of the uniform structures defined by the single pseudometrics.

In the next proposition we show that uniform structures can be defined by a wider class of spaces than pseudometric ones, namely by spaces for which the triangle inequality need not hold for all triples of points.

We shall say that a pair $(X,d)$ is a \emph{pseudosemimetric space} if $X$ is a set and $d\colon X\times X\to\mathbb R^+$ is a mapping satisfying the properties $d(x,y)=d(y,x)$ and $d(x,x)=0$ for all $x,y \in X$.
\begin{thm}\label{p34}
Let $(X,d)$ be a pseudosemimetric space and let there exist an $\ve>0$ such that for every $z\in X$ the triangle inequality
\begin{equation}\label{ti6}
d(x,y)\leqslant d(x,z)+d(z,y)
\end{equation}
holds for every $x,y \in B_{\ve}(z)$. Then the sets~\emph{(\ref{s61})} form a fundamental system of entourages of some uniformity on $X$.
\end{thm}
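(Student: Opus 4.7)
The plan is to verify the four axioms (B1)--(B4) of Definition~\ref{d6.3} for the family $\mfb=\{U_a:a>0\}$ given by~(\ref{s61}). Axioms (B1)--(B3) are essentially formal consequences of the properties $d(x,x)=0$ and $d(x,y)=d(y,x)$ together with the linear order on the parameter $a$; the local triangle inequality is not needed for them. Axiom (B4) is the substantive one, and this is where I would use the hypothesis on $\ve$.

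First I would dispose of the easy axioms. For (B1), observe that $U_a\cap U_b=U_{\min\{a,b\}}\in\mfb$. For (B2), since $d(x,x)=0\leqslant a$ for every $a>0$, we have $\Delta\subseteq U_a$. For (B3), the symmetry $d(x,y)=d(y,x)$ gives $U_a^{-1}=U_a$, so one may take $V'=V$.

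The main obstacle is (B4), i.e., producing for each $V=U_a\in\mfb$ some $W\in\mfb$ with $W^2\subseteq V$. In a genuine pseudometric space one would simply take $W=U_{a/2}$, but here the triangle inequality is only available inside $\ve$-balls, so the argument has to be localized. I would set
\[
b=\min\!\left\{\tfrac{a}{2},\tfrac{\ve}{2}\right\}>0
\]
and claim that $W=U_b$ does the job. Suppose $(x,y)\in W^2$; then there exists $z\in X$ with $d(x,z)\leqslant b$ and $d(z,y)\leqslant b$. Since $b<\ve$, both $x$ and $y$ lie in $B_{\ve}(z)$, so the hypothesis of the theorem applies and yields
\[
d(x,y)\leqslant d(x,z)+d(z,y)\leqslant 2b\leqslant a,
\]
whence $(x,y)\in U_a=V$. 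This shows $W^2\subseteq V$ and completes the verification.

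I do not anticipate additional subtleties: the pseudosemimetric assumptions $d(x,x)=0$ and $d(x,y)=d(y,x)$ handle (B1)--(B3) automatically, while the only real use of the local triangle inequality is in (B4), where it is precisely the ingredient needed to bridge from the pair $(x,z),(z,y)$ to the pair $(x,y)$. The one point that deserves a sentence of care in the writeup is that $b$ must be chosen strictly smaller than $\ve$ so that $d(x,z)\leqslant b$ actually places $x$ inside $B_{\ve}(z)$; the choice $b\leqslant\ve/2$ ensures this.
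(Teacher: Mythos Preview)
Your proof is correct and follows essentially the same approach as the paper's. The only cosmetic difference is that the paper splits (B4) into the two cases $a>\ve$ (taking $b=\ve/2$) and $a\leqslant\ve$ (taking $b=a/2$), whereas your single choice $b=\min\{a/2,\ve/2\}$ handles both at once; the underlying computation is identical.
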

\begin{proof}
For $a\geqslant b>0$ it holds $U_a\cap U_b =U_b$. Hence axiom \eqref{b1} is satisfied. Axiom \eqref{b2} is valid because the equality $d(x,x)=0$ holds for every $x\in X$. Since $d$ is symmetric, we have $U_a^{-1}=U_a$ and therefore \eqref{b3} is satisfied. Let $U_a$ be an entourage. Define $b$ as follows:
$$
b=
\begin{cases}
  \frac{\ve}{2}, &a>\ve;\\
  \frac{a}{2}, &a\leqslant \ve.
\end{cases}
$$
Observe that for every $U_a$ with $a\leqslant \ve$ by inequality~(\ref{ti6}) we have the inclusion $U_a^2\subseteq U_{2a}$. Hence for the case $a>\ve$ we have the relations
$$
U_b^2=U_{\frac{\ve}{2}}^2\subset U_{\ve}\subset U_a,
$$
and for the case $a\leqslant \ve$ we have
$$
U_b^2=U_{\frac{a}{2}}^2\subset U_a,
$$
which establish axiom \eqref{b4}.
\end{proof}

\section{Isomorphisms of Hasse diagrams}\label{Hasse}

\noindent Recall some definitions. Let  $(P,\leqslant)$ be a finite partially ordered set. The \emph{Hasse diagram} $\mathcal{H}(P)$ of the poset $(P,\leqslant)$ is a directed graph with the set of vertices $P$ and the set of arcs (directed edges) $A_P\subseteq P\times P$ such that the pair $\langle u,v \rangle$ belongs to $A_P$ if and only if $u\leqslant v$, $u\neq v$ and the implication
$$
(u\leqslant w\leqslant v)\Rightarrow (u=w \vee v=w)
$$
holds for every $w\in P$.

Two directed graphs $(X,A_X)$ and $(Y,A_Y)$ are isomorphic, if there exists a bijection $F\colon X\to Y$ such that
$$
  (\langle x,y\rangle\in A_X)\Leftrightarrow(\langle F(x),F(y)\rangle \in A_Y).
$$
In this case $F$ is an isomorphism of the directed graphs $(X,A_X)$ and $(Y,A_Y)$.

It is evident that for every ordinal space $X$ the set $\mathbf B_X$ can be considered as a poset $(\mathbf B_X,\subseteq)$ with the partial order defined by the relation of inclusion $\subseteq$.

Hasse diagrams are a very convenient tool for describing ball-structure of ordinal spaces. If node $C$ is a successor of $A$ in $\mathcal H(\mathbf B_X)$, then the ball $A\in \mathbf B_X$ is contained in $C\in \mathbf B_X$ and $A\neq C$. If additionally $C$ is a direct successor of $A$, then there is no ball between $A$ and $C$, i.e., the relation $A\subseteq B \subseteq C$ is impossible for any $B\in \mathbf B_X$ with $A\neq B \neq C$. Otherwise the balls $A$ and $C$ are not comparable.

Let  $(X,\delta_X)$  and $(Y,\delta_Y)$ be ordinal spaces. A mapping $F\colon X\to Y$ is called \emph{ball-preserving}, if for every $Z\in \mathbf B_X$ and $W \in \mathbf B_Y$ it holds
$$
F(Z) \in \mathbf B_Y \text{ and } F^{-1}(W) \in \mathbf B_X.
$$

The following theorem is a generalization of a series of results. In~\cite{P(TIAMM)} it was shown that representing trees of finite ultrametric spaces are isomorphic if and only if there exists a ball-preserving bijection between these spaces. In fact the representing tree of a finite ultrametric space is a Hasse diagram of the ballean, i.e., the set of balls of this space. Later it became clear that this assertion also holds  for metric spaces and moreover for semimetric ones~\cite[Theorem 3.3]{P18}. The proof of Theorem~\ref{t25} is analogous to the proof of Theorem 3.3 from \cite{P18} but  we reproduce it here for convenience.

\begin{thm}\label{t25}
Let $X$ and $Y$ be finite ordinal spaces. Then the Hasse diagrams $\mathcal H(\mathbf B_X)$ and $\mathcal H(\mathbf B_Y)$ are isomorphic as directed graphs if and only if there exists a bijective ball-preserving mapping $f\colon X\to Y$.
\end{thm}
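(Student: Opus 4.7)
The plan is to prove the two implications separately. The $(\Leftarrow)$ direction is essentially formal: given a ball-preserving bijection $f\colon X\to Y$, I would define $F\colon \mathbf B_X\to \mathbf B_Y$ by $F(B)=f(B)$. Because $f$ maps balls to balls (and so does $f^{-1}$), $F$ is a bijection between the ball sets. Since $f$ is a set-theoretic bijection, $A\subseteq B$ if and only if $f(A)\subseteq f(B)$; hence $F$ is an order isomorphism of $(\mathbf B_X,\subseteq)$ onto $(\mathbf B_Y,\subseteq)$. Any order isomorphism preserves the covering relation, so $F$ is an isomorphism of Hasse diagrams.

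For the $(\Rightarrow)$ direction, the bulk of the work lies in reconstructing a pointwise bijection $f\colon X\to Y$ from a graph isomorphism $F$ of $\mathcal H(\mathbf B_X)$ onto $\mathcal H(\mathbf B_Y)$. First I would upgrade $F$ to a poset isomorphism by using finiteness: in a finite poset the order relation is the reflexive-transitive closure of the covering relation, so an isomorphism of the Hasse diagrams as directed graphs automatically preserves the full partial order. Next I would identify singletons combinatorially: by the remark after Definition~\ref{d2.5} every one-point set is a ball, and any ball of cardinality at least two properly contains each of its singletons, so the singletons of $X$ are exactly the minimal elements of $(\mathbf B_X,\subseteq)$, and similarly for $Y$. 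Since $F$ is an order isomorphism, it restricts to a bijection between the sets of singletons, producing a bijection $f\colon X\to Y$ defined by $F(\{x\})=\{f(x)\}$.

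To finish, I would verify that $f(B)=F(B)$ for every $B\in\mathbf B_X$, which simultaneously establishes that $f$ sends balls to balls. For $y\in Y$, note that $y\in F(B)$ iff $\{y\}\subseteq F(B)$ iff $F^{-1}(\{y\})\subseteq B$ (using the order isomorphism); writing $F^{-1}(\{y\})=\{x\}$, this becomes $x\in B$, equivalently $y=f(x)\in f(B)$. Applying the same argument to $F^{-1}$ shows $f^{-1}$ is ball-preserving as well. The main obstacle, such as it is, is the twofold observation that (i) in the finite setting the Hasse diagram recovers the whole partial order, and (ii) the singletons are precisely the minimal balls; together these allow the point-level bijection to be extracted from a purely graph-theoretic isomorphism, after which everything reduces to bookkeeping.
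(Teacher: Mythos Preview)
Your proposal is correct and follows essentially the same strategy as the paper's proof: in both directions one passes between the ball-preserving bijection $f$ and the map $F(B)=f(B)$, identifies singletons as the minimal balls (equivalently, the vertices with no incoming arc), and shows $f(B)=F(B)$ for every ball. Your presentation is slightly more streamlined in that you explicitly upgrade the Hasse-diagram isomorphism to a poset isomorphism first and then argue $y\in F(B)\Leftrightarrow \{y\}\subseteq F(B)\Leftrightarrow F^{-1}(\{y\})\subseteq B$, whereas the paper phrases the same step in terms of predecessors in the directed graph; but the underlying argument is the same.
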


\begin{proof}
Suppose first that there exists a map $f\colon X\to Y$ which is bijective and ball-preserving. Let $\Phi\colon \mathbf B_X \to \mathbf B_Y$ be the mapping defined in the following way:
\begin{equation*}
\mathbf{B}_X \ni B \stackrel{\rm \Phi}{\longmapsto} f(B) \in \mathbf{B}_Y.
\end{equation*}
It is easy to see that $\Phi$ is a bijection with $\Phi$ and $\Phi^{-1}$ order-preserving, i.e., that $\Phi$ is an order isomorphism. To prove that $\mathcal H(\mathbf B_X)$ and $\mathcal H(\mathbf B_Y)$ are isomorphic with isomorphism $\Phi$ we have to establish the following equivalence:
\begin{equation}\label{equiv}
\langle B_1,B_2 \rangle \in A_{\mathcal H(\mathbf B_X)} \Leftrightarrow \langle \Phi(B_1),\Phi(B_2) \rangle \in A_{\mathcal H(\mathbf B_Y)}
\end{equation}
for all $B_1,B_2\in \mathbf B_X$.

According to the definition of a Hasse diagram the left part of equivalence~(\ref{equiv}) is equivalent to the following two conditions:
\begin{itemize}
  \item [A)] $B_1\subset B_2$.
  \item [B)] ($B_1\subseteq B \subseteq B_2) \Rightarrow (B_1= B \vee B_2=B)$ for every $B\in \mathbf B_X$.
\end{itemize}
Analogously, the right part of equivalence~(\ref{equiv}) is equivalent to
\begin{itemize}
  \item [C)] $\Phi(B_1)\subset \Phi(B_2)$.
  \item [D)] ($\Phi(B_1)\subseteq \tilde{B} \subseteq \Phi(B_2)) \Rightarrow (\Phi(B_1)= \tilde{B} \vee \Phi(B_2)=\tilde{B})$ for every $\widetilde{B}\in \mathbf B_Y$.
\end{itemize}
The equivalence of conditions A) and C) follows directly from the fact that $\Phi$ is an order isomorphism between $\mathbf B_X$ and $\mathbf B_Y$. Suppose that the implication B) $\Rightarrow$ D) does not hold. Consequently, there exists a $\tilde{B}\in \mathbf B_Y$ such that $\Phi(B_1)\subseteq \tilde{B} \subseteq \Phi(B_2)$ and $\Phi(B_1)\neq \tilde{B} \neq \Phi(B_2)$. Since $\Phi$ is an order isomorphism, we have $B_1\subseteq \Phi^{-1}(\tilde{B})\subseteq B_2$ and   $B_1 \neq \Phi^{-1}(\tilde{B}) \neq B_2$ which contradicts condition B). The implication D) $\Rightarrow$ B) can be proved analogously.

Conversely, suppose that $\mathcal H(\mathbf B_X)\simeq \mathcal H(\mathbf B_Y)$, with the isomorphism $F\colon \mathbf B_X\to\mathbf B_Y$. Define by $B_X^1$ and $B_Y^1$ the sets of one-point balls of the spaces $X$ and $Y$. It is clear that the vertices of $\mathcal H(\mathbf B_X)$ and $\mathcal H(\mathbf B_Y)$ having no incoming arc
form the sets $B_X^1$ and $B_Y^1$, respectively. Since $F$ is an isomorphism, the mapping
\begin{equation*}
f^*=F|_{B_X^1}\colon B_X^1\to B_Y^1
\end{equation*}
is a bijection. Define a mapping $f\colon X\to Y$ by
$$
(f(x)=y) \ \, \Leftrightarrow \ \, (f^*(\{x\})=\{y\}),
$$
which is evidently a bijection. We claim that $f$ is ball-preserving.

It is clear that for $B\in \mathbf B_X$ the image $f(B)$ is a ball in $Y$ if and only if the equality
\begin{equation}\label{eq11}
f^*\left(\bigcup\limits_{x\in B}\{\{x\}\}\right) = \bigcup\limits_{y \in B'} \{\{y\}\}
\end{equation}
holds  for some $B'\in \mathbf B_Y$.

Let $B$ be a ball in $X$. Define by $G_B$ the directed subgraph of $\mathcal H(\mathbf B_X)$ induced by the set of all predecessors of $B\in V(\mathcal H(\mathbf B_X))$ and by $G_{F(B)}$ the directed subgraph of $\mathcal H(\mathbf B_Y)$ induced by the set of all predecessors of $F(B) \in V(\mathcal H(\mathbf B_Y))$. Since $\mathcal H(\mathbf B_X) \simeq \mathcal H(\mathbf B_Y)$ and $F$ is an isomorphism, we have
\begin{equation}\label{eq12}
G_B\simeq G_{F(B)}.
\end{equation}

For all  $Z\in  V(\mathcal H(\mathbf B_X))$ ($W\in  V(\mathcal H(\mathbf B_Y))$)  denote by $\Gamma_X(Z)$ ($\Gamma_Y(W)$) the set of all predecessors of $Z$  ($W$) with no incoming arc.
By~(\ref{eq12}) we have $f^* (\Gamma_X(B)) = \Gamma_Y (F(B))$. To establish~(\ref{eq11}) it suffices to note that
$$
\Gamma_X(B)=\bigcup\limits_{x\in B}\{\{x\}\}  \ \text{ and  } \ \Gamma_Y(F(B))=\bigcup\limits_{y\in F(B)}\{\{y\}\}.
$$
These two equalities follow directly from the constructions of $\mathcal H(\mathbf B_X)$ and $\mathcal H(\mathbf B_Y)$, respectively. The arguing for $f^{-1}$ is analogous.
\end{proof}

The following statement is obvious:

\begin{prop}\label{p26}
For finite isomorphic ordinal spaces $(X,\delta_X)$  and $(Y,\delta_Y)$ the Hasse diagrams $\mathcal{H}(\mathbf B_X)$ and $\mathcal{H}(\mathbf B_Y)$ are isomorphic as directed graphs.
\end{prop}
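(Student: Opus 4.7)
The plan is to reduce immediately to Theorem~\ref{t25}: since $X$ and $Y$ are finite, it suffices to exhibit a bijective ball-preserving mapping $f \colon X \to Y$, and then the Hasse diagrams are automatically isomorphic. My candidate for $f$ is simply the given ordinal-space isomorphism $\Phi \colon X \to Y$. So the whole task reduces to checking that $\Phi$ is ball-preserving.

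First I would invoke the equivalence between Definitions~\ref{d3.1} and~\ref{d3.1*} established in the discussion after the latter: alongside $\Phi$ we obtain a similarity $\Psi \colon L_X \to L_Y$ of linearly ordered sets satisfying $\Psi(\delta_X(x,y)) = \delta_Y(\Phi(x),\Phi(y))$ for all $x,y \in X$, and $\Psi(0_{L_X}) = 0_{L_Y}$. Next, for any $c \in X$, I would observe that $\Psi$ maps the spectrum at $c$ bijectively onto the spectrum at $\Phi(c)$: indeed,
\[
\Psi(\Sp{c}) = \{\Psi(\delta_X(c,x)) : x \in X\} = \{\delta_Y(\Phi(c),\Phi(x)) : x \in X\} = \Sp{\Phi(c)},
\]
where the last equality uses the bijectivity of $\Phi$. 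Because $\Psi$ is order-preserving on $L_X$, it carries every cut $(A,B)$ of $\Sp{c}$ (with $A \neq \varnothing$) to a cut $(\Psi(A),\Psi(B))$ of $\Sp{\Phi(c)}$ with $\Psi(A) \neq \varnothing$.

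Now I would verify the key identity
\[
\Phi\bigl(B[c](A,B)\bigr) = B[\Phi(c)]\bigl(\Psi(A),\Psi(B)\bigr).
\]
For $x \in X$, we have $x \in B[c](A,B) \Leftrightarrow \delta_X(c,x) \in A \Leftrightarrow \Psi(\delta_X(c,x)) \in \Psi(A) \Leftrightarrow \delta_Y(\Phi(c),\Phi(x)) \in \Psi(A) \Leftrightarrow \Phi(x) \in B[\Phi(c)](\Psi(A),\Psi(B))$, using that $\Psi$ is a bijection between $\Sp{c}$ and $\Sp{\Phi(c)}$ for the second equivalence. Since $\Phi$ is a bijection from $X$ to $Y$, this shows $\Phi(B) \in \mathbf{B}_Y$ for every $B \in \mathbf{B}_X$. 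The same argument applied to the ordinal-space isomorphism $\Phi^{-1}$ (with inverse similarity $\Psi^{-1}$) shows that $\Phi^{-1}(W) \in \mathbf{B}_X$ for every $W \in \mathbf{B}_Y$. Hence $\Phi$ is ball-preserving, Theorem~\ref{t25} applies, and the two Hasse diagrams are isomorphic as directed graphs.

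There is really no serious obstacle; the only point deserving any care is confirming that $\Psi$ is available (not just $\Phi$), so that images and preimages of cuts of the relevant spectra are again cuts of the corresponding spectra. Once that is in place, the ball-preservation property follows by direct substitution, and Theorem~\ref{t25} does all the remaining work, which is why the authors record the statement as obvious.
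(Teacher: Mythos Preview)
Your proposal is correct. The paper offers no proof at all---it simply declares the proposition ``obvious''---so there is nothing to compare against line by line. Your route via Theorem~\ref{t25} is a perfectly valid way to supply the missing details: once you have shown that the isomorphism $\Phi$ is ball-preserving (which your cut-transport argument via $\Psi$ does cleanly), the theorem finishes the job. An essentially equivalent but slightly more direct phrasing would bypass Theorem~\ref{t25} and observe that your argument in fact shows $\Phi$ induces an order isomorphism $(\mathbf B_X,\subseteq)\to(\mathbf B_Y,\subseteq)$, and any order isomorphism of finite posets yields an isomorphism of their Hasse diagrams; but this is exactly the forward direction of the proof of Theorem~\ref{t25}, so the two routes coincide.
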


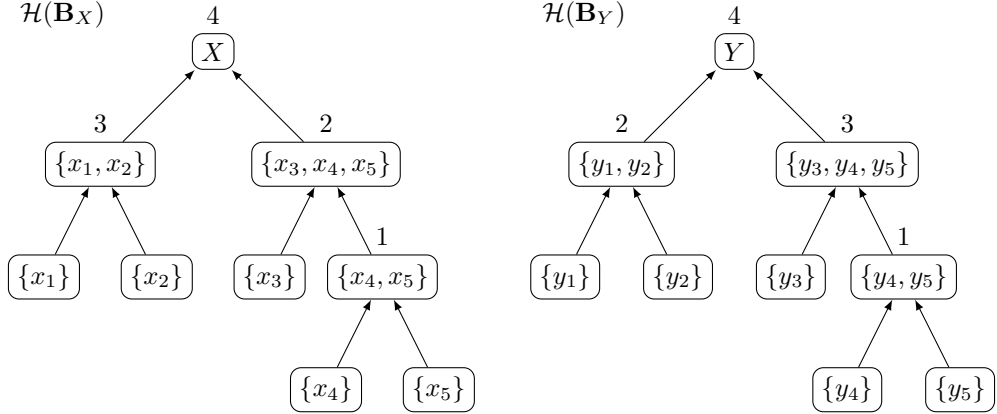
\begin{figure}[ht]
\begin{minipage}[h]{0.49\linewidth}
\begin{center}
\begin{tikzpicture}[sibling distance=5em,
  every node/.style = {shape=rectangle, rounded corners,
    draw, align=center}]
\tikzstyle{level 1}=[level distance=15mm,sibling distance=30mm]
\tikzstyle{level 2}=[level distance=15mm,sibling distance=15mm]

\draw (-2,0.5) node[draw=none] {$\mathcal H(\mathbf B_X)$};

\node[label=above:{$4$}]{$X$}
    child [>=latex, <-]{
        node[label=above:{$3$}]{$\{x_1,x_2\}$}
            child [>=latex, <-] { node{$\{x_1\}$} }
            child [>=latex, <-] { node {$\{x_2\}$} }
    }
    child [>=latex, <-] {
        node[label=above:{$2$}] {$\{x_3, x_4, x_5\}$}
            child[>=latex, <-] { node {$\{x_3\}$} }
            child [>=latex, <-]{
                node[label=above:{$1$}] {$\{x_4, x_5\}$}
                    child[>=latex, <-] { node {$\{x_4\}$} }
                    child [>=latex, <-]{ node {$\{x_5\}$} }
        }
    };
\end{tikzpicture}
\end{center}
\end{minipage}
\hfill
\begin{minipage}[h]{0.49\linewidth}
\begin{center}
\begin{tikzpicture}[sibling distance=5em,
  every node/.style = {shape=rectangle, rounded corners,
    draw, align=center}]
\tikzstyle{level 1}=[level distance=15mm,sibling distance=30mm]
\tikzstyle{level 2}=[level distance=15mm,sibling distance=15mm]

\draw (-2,0.5)node[draw=none] {$\mathcal H(\mathbf B_Y)$};

   \node[label=above:{$4$}]{$Y$}
    child [>=latex, <-] {
        node[label=above:{$2$}]{$\{y_1,y_2\}$}
            child[>=latex, <-] { node{$\{y_1\}$} }
            child[>=latex, <-] { node {$\{y_2\}$} }
    }
    child[>=latex, <-] {
        node[label=above:{$3$}] {$\{y_3, y_4, y_5\}$}
            child[>=latex, <-] { node {$\{y_3\}$} }
            child[>=latex, <-] {
                node[label=above:{$1$}] {$\{y_4, y_5\}$}
                    child[>=latex, <-] { node {$\{y_4\}$} }
                    child[>=latex, <-] { node {$\{y_5\}$} }
        }
    };
\end{tikzpicture}
\end{center}
\end{minipage}
\caption{The Hasse diagrams $\mathcal H(\mathbf B_X)$ and $\mathcal H(\mathbf B_Y)$ for the ordinal spaces  $(X,\delta_X)$, $(Y,\delta_Y)$. }
\label{f1}
\end{figure}

In what follows by $l(v)$ we denote a label of a vertex $v$ belonging to some graph and by $\operatorname{LCS}(v,w)$ the least common successor of the vertices $v$ and $w$ of some directed graph.
\begin{rem}
The assertion converse to Proposition~\ref{p26} is not true. Let $X=\{x_1,x_2, x_3, x_4, x_5\}$ and  $Y=\{y_1,y_2, y_3, y_4, y_5\}$ be the sets of leaves of the directed trees depicted at Figure~\ref{f1}. Define metric spaces $(X,d_X)$, $(Y,d_Y)$ by the following rule:
$$
d_X(x_i, x_j)=
\begin{cases}
l(\operatorname{LCS}(\{x_i\},\{x_j\})), &x_i\neq x_j;\\
0, &x_i=x_j,
\end{cases}
$$
$$
d_Y(y_i, y_j)=
\begin{cases}
l(\operatorname{LCS}(\{y_i\},\{y_j\})), &y_i\neq y_j;\\
0, &y_i=y_j,
\end{cases}
$$
and let $(X,\delta_X)$, $(Y,\delta_Y)$ be their ordinal types, respectively. It is easy to show that $(X,d_X)$ and $(Y,d_Y)$ are ultrametric spaces which are not weakly similar. By Proposition~\ref{p23} the ordinal spaces $(X,\delta_X)$, $(Y,\delta_Y)$ are not isomorphic, though the Hasse diagrams $\mathcal H(\mathbf B_X)$, $\mathcal H(\mathbf B_Y)$ of ordinal spaces  $(X,\delta_X)$, $(Y,\delta_Y)$ are isomorphic as directed graphs and coincide with the directed trees at Figure~\ref{f1}.

Note that it was proved in~\cite[Theorem 2.8]{DPT} that for a finite metric space $X$ the Hasse diagram $\mathcal H(\mathbf B_X)$  is a tree if and only if $X$ is an ultrametric space.
\end{rem}

With $\mathbf B_X=\bar{\mathbf B}_X=\widetilde{\mathbf B}_X$, justified by coincidence of open and closed balls in finite semimetric spaces, one easily sees the following:

\begin{prop}\label{c4.8}
Let $(X,d)$ be a realization of the finite ordinal space $(Y,\delta_Y)$. Then the Hasse diagrams $\mathcal{H}(\mathbf B_X)$ and $\mathcal{H}(\mathbf B_Y)$ are isomorphic as directed graphs.
\end{prop}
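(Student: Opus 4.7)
The plan is to reduce the statement to Proposition~\ref{p26}. Since $(X,d)$ realizes $(Y,\delta_Y)$, the ordinal type $(X,\delta_X)$ of $(X,d)$ is isomorphic to $(Y,\delta_Y)$; hence by Proposition~\ref{p26} the Hasse diagrams of the sets of \emph{ordinal} balls of $(X,\delta_X)$ and $(Y,\delta_Y)$ are isomorphic as directed graphs. It therefore suffices to show that, in the finite case, $\mathbf B_X$ (the family of open-$=$-closed semimetric balls of $(X,d)$) coincides, \emph{as a family of subsets of} $X$, with the set of ordinal balls of $(X,\delta_X)$. If so, $\mathcal H(\mathbf B_X)$ is literally identical to the Hasse diagram of ordinal balls of $(X,\delta_X)$, and Proposition~\ref{p26} gives the desired isomorphism.

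For one inclusion I would appeal directly to the Claim in Section~\ref{balls}: any $B_r(c)$ or $B_r[c]$ in $(X,d)$ equals $B[c](A,B)$ for a suitable cut $(A,B)$ of $\Sp{c}$, hence is an ordinal ball of $(X,\delta_X)$.

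For the reverse inclusion I would use finiteness. Fix $c\in X$; the spectrum $\Sp{c}\subseteq L_X$ is a finite linearly ordered set, so $\Sp{c}$ has a greatest element, $\Sp{c}\setminus\{0\}$ has a least element, and every proper cut $(A,B)$ with $A,B\neq\varnothing$ is of type $(1,1)$. Consider an arbitrary ordinal ball $B[c](A,B)$. If $B=\varnothing$, the ordinal ball is $X$, which is a closed semimetric ball of radius $\max_{x\in X}d(c,x)$. If $A=\{0\}$, the ordinal ball is $\{c\}$, which is the semimetric ball $B_r(c)$ for any positive $r$ strictly less than the least element of $\Sp{c}\setminus\{0\}$, transported along~(\ref{eq1}) to $d$. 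Otherwise $A$ has a last element $a\in L_X\setminus\{0\}$; picking any $x_a\in X$ with $\delta_X(c,x_a)=a$ and using~(\ref{eq1}) together with the fact that $a$ is the largest element of $A$, one checks that $\delta_X(c,y)\in A$ iff $d(c,y)\leqslant d(c,x_a)$, so $B[c](A,B)=B_{d(c,x_a)}[c]\in \mathbf B_X$.

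The two inclusions together give the equality of the two families of subsets, which finishes the proof via Proposition~\ref{p26}. There is no real obstacle here: the whole argument is routine bookkeeping between Definition~\ref{d2.5} (ordinal balls as cuts) and the metric-style definition via radii, made possible by the fact that finite linearly ordered spectra admit only cuts of type $(1,1)$, so the classification (4)--(7) in Section~\ref{balls} is not triggered.
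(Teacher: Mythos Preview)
Your proposal is correct and is essentially a fleshed-out version of the paper's one-line justification (``With $\mathbf B_X=\bar{\mathbf B}_X=\widetilde{\mathbf B}_X$ \ldots\ one easily sees''); the substance in both cases is the observation that for a finite spectrum every cut is of type $(1,1)$ (or trivial), so the ordinal balls of $(X,\delta_X)$ and the semimetric balls of $(X,d)$ coincide as families of subsets. One small simplification: by the definition of realization the underlying sets agree, so $(X,\delta_X)=(Y,\delta_Y)$ and the appeal to Proposition~\ref{p26} is vacuous---the two Hasse diagrams are literally the same once you have $\mathbf B_{(X,d)}=\mathbf B_{(Y,\delta_Y)}$.
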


\section{The number of balls in finite ordinal spaces}\label{ballnum}
\noindent In this section we formulate two conjectures about the maximal and the minimal number of balls in finite ordinal spaces $(X,L,\delta)$. In the latter case we use an additional condition that $\delta(x,y)\neq \delta(z,w)$ for every different pairs of points $\{x,y\}\neq\{z,w\}$ with $x\neq y$, $z\neq w$. Without this additional condition we trivially obtain that the minimal number of balls is equal to $|X|+1$ if $|X|>1$.

\begin{con}\label{cofin1}
Let $X$ be an ordinal space with $|X|=n\geqslant 1$. Then the maximal number of balls in $X$ is equal to $b_n$, where
$$
b_1=1, \  b_2=3, \  b_3=6, \  b_4=12, \  b_5=19, \  b_6=29, \  b_7=40, \ldots
$$
is the sequence A263511 from~\textup{\cite{oeis}}  (Total number of ON (black) cells after n iterations of the ``Rule 155'' elementary cellular automaton starting with a single ON (black) cell.).
\end{con}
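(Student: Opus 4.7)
The plan is to first reduce to a generic situation (all nonzero distances from any point are distinct), then to describe balls combinatorially via nearest-neighbor sets, and finally to try to match the resulting count with the sequence A263511.

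\emph{Step 1: reduction to the generic case.} I would show that the supremum of $|\mathbf{B}_X|$ among ordinal spaces with $|X|=n$ is attained on those whose spectrum $\Sp{c}$ has exactly $n$ elements for every $c \in X$. Given an extremal space, produce any semimetric realization via Proposition~\ref{p1}. If $d(c,x)=d(c,y)$ for some $c$ and distinct $x,y$, perturb $d(c,x)$ by a small amount; this refines the linear order of $L$ and splits one cut of $\Sp{c}$ into two, strictly increasing the ball count at $c$, while for every other center $c'$ the spectrum $\Sp{c'}$ either gains an element (creating a new cut) or remains the same, so the ball count at no center ever decreases. Iterating yields a generic space at which the maximum is attained.

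\emph{Step 2: combinatorial description in the generic case.} In a generic ordinal space the cuts of $\Sp{c}$ correspond bijectively to the chain
\[
\{c\} \subset \{c\} \cup N_1(c) \subset \cdots \subset \{c\} \cup N_{n-1}(c) = X,
\]
where $N_k(c)$ denotes the set of the $k$ points closest to $c$. Writing $B_k(c) := \{c\} \cup N_k(c)$, every ball equals some $B_k(c)$, and, since $|B_k(c)|=k+1$, balls with distinct index $k$ never coincide. Hence
\[
|\mathbf{B}_X| \;=\; \sum_{k=0}^{n-1}\bigl|\{B_k(c) : c \in X\}\bigr|,
\]
with the $k=0$ term always equal to $n$ and the $k=n-1$ term always equal to $1$. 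The $4$-cycle semimetric with $d(x_i,x_{i+1})=i$ for $i=1,2,3$, $d(x_1,x_4)=4$, $d(x_1,x_3)=5$, $d(x_2,x_4)=6$ already realises the split $4+3+4+1=12=b_4$, suggesting that cyclic or otherwise highly symmetric configurations furnish the extremal examples.

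\emph{Step 3: matching A263511.} This is the main obstacle. Because the target sequence is defined through the Rule $155$ cellular automaton, there is no visible structural reason for it to arise from nearest-neighbor counting on $n$ points. I would pursue an upper bound via an incidence-type inequality on the number of distinct $k$-neighborhood sets $\{B_k(c) : c \in X\}$, seeking a generating function matching that of A263511; in parallel, I would try to build extremal $(n+1)$-point examples inductively from extremal $n$-point ones, adding a point that contributes exactly $b_{n+1}-b_n$ new balls. The alternation visible in the differences $2,3,6,7,10,11,16,15,\ldots$ hints at a parity-dependent construction. Before either line of attack can gain traction, an exhaustive computer enumeration of ordinal types on $n\leq 7$ points is needed to verify the claimed values $19,29,40$ and to extract structural invariants of the extremal configurations. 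The genuine difficulty is either exhibiting an explicit correspondence between extremal ordinal spaces and states of Rule $155$, or else discovering that no such correspondence exists and that the numerical agreement breaks down for some $n$.
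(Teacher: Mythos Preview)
The statement you are addressing is labelled a \emph{conjecture} in the paper, not a theorem; the authors give no proof and explicitly write that their supporting example ``is not a proof of this maximality.'' So there is no proof in the paper to compare your attempt against, and what you have produced is a research plan rather than a proof.

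Your Steps~1 and~2 are reasonable and essentially correct as preliminary reductions. One imprecision in Step~1: you argue that ``the ball count at no center ever decreases,'' but $|\mathbf{B}_X|$ is the number of \emph{distinct} balls, not the sum over centers. The right statement is that the \emph{collection} of balls at each center, viewed as subsets of $X$, does not shrink under a generic perturbation; since $\mathbf{B}_X$ is the union of these collections, $|\mathbf{B}_X|$ cannot decrease. Your $n=4$ example is correct and does realise $b_4=12$. (A small slip: the first differences of A263511 after $10,11$ are $14,15$, not $16,15$.)

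Step~3 is where the actual content of the conjecture lies, and you candidly identify it as ``the main obstacle'' without resolving it. Your suggestions --- an incidence bound on the number of distinct $k$-neighbourhoods, an inductive extremal construction, computer search for small $n$ --- are sensible heuristics, but none is carried out, and you yourself allow for the possibility that ``the numerical agreement breaks down for some $n$.'' In short, your write-up is an honest outline of how one might \emph{attack} the conjecture, consistent with the paper's own stance that the problem is open; it is not a proof, and the paper does not claim one either.
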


Conjecture \ref{cofin1} and Conjecture \ref{cofin2} below are based on direct constructions of ordinal spaces having a maximal number of balls and a minimal number of balls with the above mentioned restrictions, respectively. In the next example we  construct an ordinal space $X$,  $|X|=6$, with the greatest possible number of balls. Note also that such a construction is not a proof of this maximality.

\begin{exa} Let $X=\{a,b,c,d,e,f\}$.  We describe the relations on $X\times X$ by labeling all the unordered pairs of points from $X$ by natural numbers from the set $\{1,2,3,\ldots ,14,15\}$ such that $\delta(x,y)<\delta(z,w)$ if and only if the label of the pair $\{x,y\}$ is strictly less then the label of $\{z,w\}$. See the symmetric table~(\ref{em}) with the labels given in dependence of points corresponding to a row and column, respectively.

\begin{equation}\label{em}
    \begin{array}{c|cccccc}
        & a & b & c & d & e & f \\ \hline
      a & \cdot & 5 & 12 & 15 & 8 & 6 \\
      b & 5 & \cdot & 4 & 11 & 14 & 7 \\
      c & 12 & 4 & \cdot & 3 & 10 & 13 \\
      d & 15 & 11 & 3 & \cdot & 2 & 9 \\
      e & 8 & 14 & 10 & 2 & \cdot & 1 \\
      f & 6 & 7 & 13 & 9 & 1 & \cdot \\
    \end{array}
\end{equation}
Every of the following rows contains a list of all balls with the centers $a,b,c,\ldots $, respectively:
\begin{equation*}
    \begin{array}{ccccccc}
\{a\}, & \{a,b\}, & \{a,b,f\}, & \{a,b,e,f\}, & \{a,b,c,e,f\}, & X; \\

\{b\}, & \{b,c\}, & \{a,b,c\}, & \{a,b,c,f\}, & \{a,b,c,d,f\}, & X; \\

\{c\}, & \{c,d\}, & \{b,c,d\}, & \{b,c,d,e\}, & \{a,b,c,d,e\}, & X; \\

\{d\}, & \{d,e\}, & \{c,d,e\}, & \{c,d,e,f\}, & \{b,c,d,e,f\}, & X; \\

\{e\}, & \{e,f\}, & \{d,e,f\}, & \{a,d,e,f\}, & \{a,c,d,e,f\}, & X; \\

\{f\}, & \{e,f\}, & \{a,e,f\}, & \{a,b,e,f\}, & \{a,b,d,e,f\}, & X.
    \end{array}
\end{equation*}

The total number of different balls is $b_6=29$.
\end{exa}

\begin{con}\label{cofin2}
Let $(X,\delta)$ be an ordinal space with $|X|=n$ and let $\delta(x,y)\neq \delta(z,w)$ for every different pairs of points $\{x,y\}\neq\{z,w\}$ with $x\neq y$, $z\neq w$. Then the following inequality holds:
\begin{equation}\label{e5.1}
\frac{n(n+1)}{2} \leqslant |\mathbf B_X|.
\end{equation}
Equality in~\textup{(\ref{e5.1})} holds if and only if the Hasse diagram $\mathcal H(X)$ has a structure as depicted in Figure~\textup{\ref{f2}}.
\end{con}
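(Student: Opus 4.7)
The plan is to induct on $n = |X|$. The base case $n = 1$ is immediate, as $|\mathbf{B}_X| = 1 = 1\cdot 2/2$. For the inductive step, remove a point $p \in X$ and set $X' = X \setminus \{p\}$; $X'$ still satisfies the distinctness hypothesis, so by the inductive hypothesis $|\mathbf{B}_{X'}| \geq n(n-1)/2$, and it would suffice to prove $|\mathbf{B}_X| \geq |\mathbf{B}_{X'}| + n$. Under the distinctness assumption $\Sp{p}$ has exactly $n$ elements, so the balls centered at $p$ in $X$ form a strict chain $\{p\} \subsetneq B^{(2)} \subsetneq \cdots \subsetneq B^{(n)} = X$ of length $n$. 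Each of these contains $p$ and hence lies outside $\mathbf{B}_{X'}$ (whose members are subsets of $X'$). If one could show in addition that every ball of $X'$ remains a ball of $X$ (i.e., $\mathbf{B}_{X'} \subseteq \mathbf{B}_X$), the inductive step would follow at once.

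The main obstacle is that this inclusion can fail. A ball $B' \in \mathbf{B}_{X'}$ centered at $c \in X'$ comes from an initial segment of the spectrum of $c$ in $X'$ with some maximum $r$; the ``same'' cut, now viewed in the larger spectrum of $c$ in $X$, yields a different set whenever $\delta(c, p) < r$, because the ball in $X$ must then include $p$. The set $B'$ could still appear as a ball at some other center of $X$, but this cannot be guaranteed a priori. The heart of the induction is therefore a combinatorial inequality: for a judicious choice of $p$, the number of balls genuinely new to $\mathbf{B}_X$ beyond the $n$ centered at $p$ must compensate for the balls of $X'$ lost in passage to $X$. Natural candidates for $p$ are one vertex of the diameter pair of $X$, or more generally an extremal vertex of a longest nearest-neighbor chain in a metric realization supplied by Proposition~\ref{p1}; these are the choices that should minimize the number of lost balls.

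For the equality characterization, the plan is to back-propagate equality through the induction. Equality at size $n$ forces that exactly $n$ new balls are added at every step and that no ball of $X'$ is lost, which pins $p$ down as an ``endpoint'' of a linear ordering of $X'$ induced by $\delta(p, \cdot)$. Iterating yields a labeling $x_1, x_2, \ldots, x_n$ of $X$ for which every ball is a consecutive interval $\{x_i, x_{i+1}, \ldots, x_j\}$ with $1 \leq i \leq j \leq n$; there are $n(n+1)/2$ such intervals, and the Hasse diagram $\mathcal H(\mathbf{B}_X)$ then has, from each interval $\{x_i, \ldots, x_j\}$, outgoing edges exactly to $\{x_{i-1}, \ldots, x_j\}$ and $\{x_i, \ldots, x_{j+1}\}$ whenever these remain valid subsets of $X$, matching the structure in Figure~\ref{f2}. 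The delicate step throughout the program is the net-gain inequality of the inductive step; a naive per-size bound $\beta_k \geq n-k+1$ on the number of size-$k$ balls can already fail in the presence of mutual nearest-neighbor pairs, so the argument must invoke a finer invariant tracking the interaction of nearest- and farthest-neighbor structures across all sizes simultaneously.
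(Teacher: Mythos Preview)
The statement you are attempting to prove is labelled a \emph{Conjecture} in the paper; the authors do not supply a proof, and they say explicitly that the conjectures in this section are ``based on direct constructions'' of extremal examples rather than on a demonstration that those constructions are optimal. There is therefore no paper proof to compare your proposal against.

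Your proposal is not a proof either, and you are candid about this. The outline sets up the natural induction and correctly isolates the obstacle: removing a point $p$ and passing to $X'=X\setminus\{p\}$ need not give $\mathbf B_{X'}\subseteq\mathbf B_X$, so one must show that the $n$ balls centred at $p$ (which do all contain $p$ and hence are new) at least compensate for the balls of $X'$ that fail to survive as balls of $X$. You do not establish this compensation inequality; you only suggest that a judicious choice of $p$ (an endpoint of the diametrical pair, or an extremum of a nearest-neighbour chain) ``should minimize the number of lost balls'', and you note yourself that the naive size-stratified bound $\beta_k\geq n-k+1$ can already fail. That missing inequality is the entire content of the problem; everything else in your plan is straightforward bookkeeping.

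The equality discussion is plausible scaffolding---the extremal picture in Figure~\ref{f2} is exactly the ``interval'' poset you describe---but it, too, rests on the unproved inductive step, since back-propagating equality presupposes that the lower bound has been established at every size. In short: the architecture is reasonable, but the decisive combinatorial lemma is absent, and since the authors themselves leave the statement open, filling that gap would be a new result rather than a reconstruction of anything in the paper.
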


\begin{rem}
Note that a number of the form $n(n+1)/2$ for some $n\geqslant 1$ is called a triangular number since it counts the number of points which can be arranged to an ``equilateral triangle'' of side length $n$.
\end{rem}
\begin{figure}[ht]
\begin{minipage}[h]{0.39\linewidth}
\begin{center}
\begin{tikzpicture}[sibling distance=5em,
  every node/.style = {shape=rectangle, rounded corners,
    draw, align=center}]
\tikzstyle{level 1}=[level distance=15mm,sibling distance=15mm]
\tikzstyle{level 2}=[level distance=15mm,sibling distance=15mm]

\draw (-2,0.5)node[draw=none] {$\mathcal H(\mathbf B_X)$};

   \node {$X$}
    child [>=latex, <-] {
        node{$\{x_1,x_2\}$}
            child[>=latex, <-] { node{$\{x_1\}$} }
            child[>=latex, <-] { node{$\{x_2\}$} }
    }
    child[>=latex, <-] {
        node {$\{x_2, x_3\}$}
            child[>=latex, <-] { node {\phantom{$\{x_2\}$}} }
            child[>=latex, <-] { node {$\{x_3\}$} }
    };
\end{tikzpicture}
\end{center}
\end{minipage}
\hfill
\begin{minipage}[h]{0.59\linewidth}
\begin{center}
\begin{tikzpicture}[sibling distance=5em,
  every node/.style = {shape=rectangle, rounded corners,
    draw, align=center}]
\tikzstyle{level 1}=[level distance=15mm,sibling distance=20mm]
\tikzstyle{level 2}=[level distance=15mm,sibling distance=20mm]

\draw (-2,0.5)node[draw=none] {$\mathcal H(\mathbf B_Y)$};

   \node {$Y$}
    child [>=latex, <-] {
        node{$\{y_1,y_2,y_3\}$}
            child[>=latex, <-] { node{$\{y_1, y_2\}$}
                   child[>=latex, <-] { node {$\{y_1\}$} }
                    child [>=latex, <-]{ node(2) {$\{y_2\}$} }
            }
            child[>=latex, <-] { node{$\{y_2, y_3\}$}}
    }
    child[>=latex, <-] {
        node {$\{y_2, y_3,y_4\}$}
            child[>=latex, <-] { node(1) {\phantom{$\{y_2, y_3\}$}} }
            child[>=latex, <-] { node {$\{y_3, y_4\}$}
                    child[>=latex, <-] { node(3) {$\{y_3\}$} }
                    child [>=latex, <-]{ node {$\{y_4\}$} }
            }
    };

          \draw[>=latex,->] (2)--(1);
        \draw[>=latex,->] (3)--(1);
\end{tikzpicture}
\end{center}
\end{minipage}
\caption{The Hasse diagrams $\mathcal H(\mathbf B_X)$, $\mathcal H(\mathbf B_Y)$ of the ordinal spaces  $(X,\delta_X)$, $(Y,\delta_Y)$ having minimal numbers of balls with $|X|=3$, $|Y|=4$.}
\label{f2}
\end{figure}
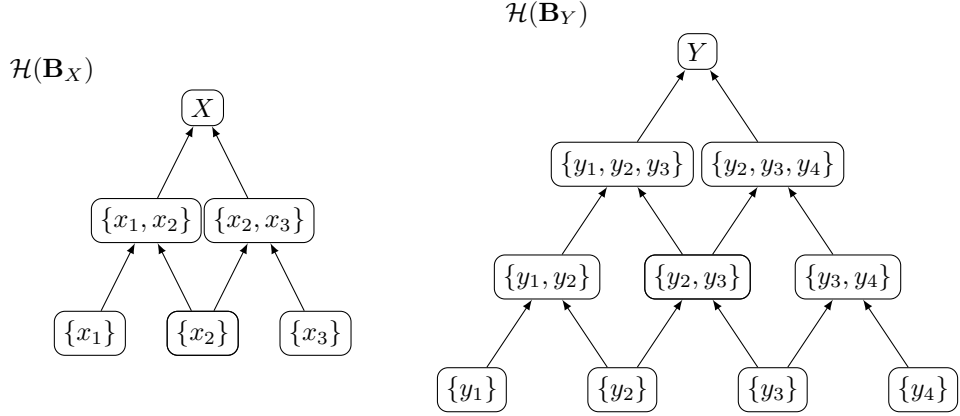

\begin{rem}
Of course Conjectures \ref{cofin1} and \ref{cofin2} could be formulated for metric spaces instead of ordinal ones.
Indeed, the considered problems do not depend on the triangle inequality. Moreover, we saw above that also the concept of distance is not important here.  From this point of view, ordinal spaces provide a “minimal structure” for the problems considered.
\end{rem}

\section{Embeddings of ordinal spaces in the real line}\label{rline}
\noindent We shall say that an ordinal space $(X, \delta)$ is \emph{embeddable} in $\mathbb R^n$ if there exists a mapping
$\Phi\colon X\to \mathbb R^n$ such that the following relations hold for all $x,y,z,w \in X$:
\begin{equation}\label{e221}
\delta(x,y)<\delta(z,w) \text{ iff }  d(\Phi(x),\Phi(y))<d(\Phi(z),\Phi(w)),
\end{equation}
\begin{equation}\label{e222}
\delta(x,y)=\delta(z,w) \text{ iff }  d(\Phi(x),\Phi(y))=d(\Phi(z),\Phi(w)),
\end{equation}
\begin{equation}\label{e223}
\delta(x,y)>\delta(z,w) \text{ iff }  d(\Phi(x),\Phi(y))>d(\Phi(z),\Phi(w)),
\end{equation}
where $d$ is the Euclidean metric in $\mathbb R^n$.
If $X$ is embeddable in $\mathbb R^n$, then we write $X\hookrightarrow \mathbb R^n$. Note that~(\ref{e221}) implies that $\Phi$ is injective (if $z\neq w$, choose $x=y$ to get $\Phi(z)\neq \Phi(w)$).

It is easy to see that ordinal spaces $X$ with $|X|=1,2$ are always embeddable in $\mathbb R^1$ and a space $X$ with $|X|=3$ is embeddable in $\mathbb R^1$ if and only if its three points form a triangle with unique maximal side.

In order to now study embeddings of finite ordinal spaces $(X,\delta)$ in ${\mathbb R}^1$, we need some special concepts.  First of all, for $n\in \mathbb N^+$ let
\begin{eqnarray*}
{\mathcal I}_n=\{(i_0,i_1,\ldots , i_k)\in \{1,2,\ldots ,n\}^{k+1}\ |\ k\in{\mathbb N^+}, \, i_0\leq i_1\leq\ldots\leq i_{k-1}\leq i_k\}.
\end{eqnarray*}

\begin{defn}
Let  $x_1, x_2,\ldots ,x_n$ be an enumeration of the points of an ordinal space $X$ with $|X|=n$ and let $(i_0,i_1,\ldots , i_k), (j_0,j_1,\ldots , j_k)\in {\mathcal I}_n$. Then we write
\begin{equation*}
(i_0,i_1,\ldots , i_k)\sim (j_0,j_1,\ldots , j_k)
\end{equation*}
and
\begin{equation*}
(i_0,i_1,\ldots , i_k)\prec (j_0,j_1,\ldots , j_k),
\end{equation*}
respectively, if there exists a permutation $\pi$ of $\{1,2,\ldots ,k\}$ with
\begin{eqnarray}\label{allequal}
\delta (x_{i_{l-1}},x_{i_l})=\delta (x_{j_{\pi (l)-1}},x_{j_{\pi (l)}})\mbox{ for all }l\in\{1,2,\ldots ,k\}
\end{eqnarray}
and
\begin{eqnarray*}
\delta (x_{i_{l-1}},x_{i_l})\leq\delta (x_{j_{\pi (l)-1}},x_{j_{\pi (l)}})\mbox{ for all }l\in\{1,2,\ldots ,k\}\mbox{ but not \eqref{allequal},}
\end{eqnarray*}
respectively. (It is suggestive to think that $\pi$ associates to the $l$-th ``interval'' with the endpoints $x_{i_{l-1}},x_{i_l}$ on the left side the $\pi(l)$-th ``interval'' with the endpoints $x_{j_{\pi (l)-1}},x_{j_{\pi (l)}}$ on the right side for $l=1,2,\ldots ,k$.)

We say that the enumeration has the {\em majorization property} if for all $(i_0,i_1,\ldots , i_k),\linebreak (j_0,j_1,\ldots , j_k)\in {\mathcal I}_n$ and $k\in {\mathbb N^+}$ with
$(i_0,i_1,\ldots , i_k)\prec (j_0,j_1,\ldots , j_k)$ or $(i_0,i_1,\ldots , i_k)\sim (j_0,j_1,\ldots , j_k)$, respectively,
it holds $\delta(x_{i_0},x_{i_k})<\delta(x_{j_0},x_{j_k})$ or $\delta(x_{i_0},x_{i_k})=\delta(x_{j_0},x_{j_k})$, respectively.
\end{defn}

For the subsequent considerations, call a pair $\{a,b\}$ \emph{diametrical} in $(X,\delta)$ if $\delta(a,b)\geqslant \delta(x,y)$ for all $x,y \in X$.

Let us list some consequences of the majorization property.

\begin{prop}\label{p102}
Let $(X,\delta)$ be an ordinal space with $|X|=n\geqslant 2$ and $x_1, x_2,\ldots ,x_n$ be an enumeration of the points of $X$ satisfying the majorization property.
Then the following is valid:
      \begin{enumerate}
        \item[(i)] The implication
            $$
            (i\leqslant k \leqslant l \leqslant j) \Rightarrow (\delta(x_k, x_l)< \delta(x_i, x_j))
            $$
            holds for every $i,k,l,j \in \{1,\ldots ,n\}$ with $i\neq k$ or $l\neq j$.
        \item[(ii)] The following inequalities hold:
            $$
            \delta (x_1,x_2)<\delta (x_1,x_3)<\ldots <\delta (x_1,x_n)>\ldots>\delta(x_{n-2},x_n)>\delta(x_{n-1},x_n).
            $$
        \item[(iii)]  $\{x_1,x_n\}$ is a single diametrical pair in $X$.
        \item[(iv)] The equivalences
            \begin{eqnarray*}
            (\delta(x_i, x_j)<\delta(x_k, x_l))&\Leftrightarrow &(\delta(x_i, x_k)<\delta(x_j, x_l)),\\
             (\delta(x_i, x_j)=\delta(x_k, x_l))&\Leftrightarrow &(\delta(x_i, x_k)=\delta(x_j, x_l)),\\
              (\delta(x_i, x_j)>\delta(x_k, x_l))&\Leftrightarrow &(\delta(x_i, x_k)>\delta(x_j, x_l))
            \end{eqnarray*}
            hold for every $i,k,j,l \in \{1,\ldots ,n\}$ with $i<k<j<l$.
       \end{enumerate}
\end{prop}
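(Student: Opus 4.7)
The strategy is to prove all four items by applying the majorization property to carefully chosen pairs of tuples in $\mathcal I_n$; items (ii) and (iii) will follow at once from (i), so the real work is in (i) and (iv).

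For (i), fix $i\leq k\leq l\leq j$ with $i\neq k$ or $l\neq j$ and consider the two length-$4$ tuples $(k,k,l,l)$ and $(i,k,l,j)$ (both lie in $\mathcal I_n$ since the indices are weakly increasing). Taking the identity permutation $\pi$, the interval-by-interval comparison becomes
\[
0=\delta(x_k,x_k)\leq \delta(x_i,x_k),\qquad \delta(x_k,x_l)\leq\delta(x_k,x_l),\qquad 0=\delta(x_l,x_l)\leq \delta(x_l,x_j),
\]
all three of which hold. The hypothesis forces at least one outer inequality to be strict, so $(k,k,l,l)\prec (i,k,l,j)$, and the majorization property, read off the endpoints, yields $\delta(x_k,x_l)<\delta(x_i,x_j)$. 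Items (ii) and (iii) are then immediate specializations of (i): the substitutions $(i,k,l,j)=(1,1,m,m+1)$ and $(m,m+1,n,n)$ for $1\leq m\leq n-1$ give the two monotone chains of (ii); while for (iii), any pair $\{x_k,x_l\}$ with $k\leq l$ and $(k,l)\neq(1,n)$ satisfies the hypothesis of (i) with $i=1$, $j=n$, so $\delta(x_k,x_l)<\delta(x_1,x_n)$, whence $\{x_1,x_n\}$ is the unique diametrical pair.

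The main step is (iv). Fix $i<k<j<l$ and consider the length-$3$ tuples $(i,k,j)$ and $(k,j,l)$, whose endpoint distances are the two quantities appearing on the left of (iv): $\delta(x_i,x_j)$ and $\delta(x_k,x_l)$. The crucial observation is that the interior interval $\delta(x_k,x_j)$ is shared between the two tuples. Using the swap permutation $\pi(1)=2$, $\pi(2)=1$, the interval-by-interval condition for $(i,k,j)$ versus $(k,j,l)$ reduces to the pair
\[
\delta(x_i,x_k)\leq \delta(x_j,x_l),\qquad \delta(x_k,x_j)\leq \delta(x_k,x_j),
\]
where the second comparison is automatic. If the first holds strictly, then $(i,k,j)\prec (k,j,l)$ and majorization gives $\delta(x_i,x_j)<\delta(x_k,x_l)$; if the first holds with equality, then $(i,k,j)\sim (k,j,l)$ and majorization gives $\delta(x_i,x_j)=\delta(x_k,x_l)$. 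Exchanging the roles of the two tuples (apply the same swap-based argument with $(k,j,l)$ on the left and $(i,k,j)$ on the right) handles the remaining case $\delta(x_i,x_k)>\delta(x_j,x_l)$ and yields $\delta(x_i,x_j)>\delta(x_k,x_l)$. The three implications together with the trichotomy of the linear order on $L$ upgrade to the three equivalences stated in (iv).

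\textbf{Main obstacle.} The only nonobvious step is selecting the correct pair of tuples in (iv). The guiding idea is that the ``swap'' permutation $\pi$ is useful precisely because $(i,k,j)$ and $(k,j,l)$ share the interval $\delta(x_k,x_j)$: that common interval is cancelled by $\pi$, and the remaining single interval comparison is then transferred to a comparison of endpoint distances by the majorization property. Once this pairing is identified, the rest is bookkeeping via trichotomy in $L$.
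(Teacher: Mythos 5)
Your proof is correct and follows essentially the same route as the paper: item (i) via the comparison $(k,k,l,l)\prec(i,k,l,j)$, items (ii)--(iii) as immediate specializations, and item (iv) via the tuples $(i,k,j)$ and $(k,j,l)$ with the shared interval $\delta(x_k,x_j)$ cancelled by the swap permutation. Your write-up is in fact more detailed than the paper's, which leaves the permutation choice and the trichotomy upgrade implicit.
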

\begin{proof}

(i): If $i\leqslant k \leqslant l \leqslant j$ and $i\neq k$ or $l\neq j$, it holds $(k,k,l,l)\prec (i,k,l,j)$, hence $\delta(x_k, x_l)<\delta(x_i,x_j)$

Conditions (ii) and (iii) easily follow from (i).

(iv): Let $i<k<j<l$. If $\delta(x_i, x_k)<\delta(x_j, x_l)$, then $(i,k,j)\prec (k,j,l)$, hence $\delta(x_i, x_j)<\delta(x_k, x_l)$. The converse implication can be established by contradiction. The other two equivalences are analogous.
\end{proof}

\begin{prop}\label{p101}
Let $(X,\delta)$ be an ordinal space with $|X|=n\in {\mathbb N^+}$ and $X\hookrightarrow \mathbb R^1$. Then there exists an enumeration $x_1, x_2,\ldots ,x_n$ of the points of the space $X$ satisfying the majorization property.
\end{prop}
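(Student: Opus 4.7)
My plan is to use the embedding $\Phi \colon X \to \mathbb{R}^1$ to linearly order the points of $X$ along the real line and then verify the majorization property via a telescoping argument. First, since $\Phi$ is injective by the remark following (\ref{e221})--(\ref{e223}), we may enumerate the points $x_1, x_2, \ldots, x_n$ so that the real numbers $a_i := \Phi(x_i)$ satisfy $a_1 < a_2 < \cdots < a_n$. I claim this enumeration has the majorization property.

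The crucial observation is that, with this enumeration, whenever $i \leqslant j$ we have $d(\Phi(x_i), \Phi(x_j)) = a_j - a_i \geqslant 0$, and more importantly, for any tuple $(i_0, i_1, \ldots, i_k) \in \mathcal{I}_n$ the distance telescopes:
\[
d(\Phi(x_{i_0}), \Phi(x_{i_k})) = a_{i_k} - a_{i_0} = \sum_{l=1}^{k} (a_{i_l} - a_{i_{l-1}}) = \sum_{l=1}^{k} d(\Phi(x_{i_{l-1}}), \Phi(x_{i_l})).
\]
This is the heart of the argument and uses crucially the monotonicity $i_0 \leqslant i_1 \leqslant \cdots \leqslant i_k$ granted by the definition of $\mathcal{I}_n$.

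Now suppose $(i_0, \ldots, i_k) \prec (j_0, \ldots, j_k)$ with witnessing permutation $\pi$ of $\{1, \ldots, k\}$. By (\ref{e221}) and (\ref{e222}), the $\delta$-inequalities/equalities in the definition of $\prec$ translate into the Euclidean inequalities
\[
d(\Phi(x_{i_{l-1}}), \Phi(x_{i_l})) \leqslant d(\Phi(x_{j_{\pi(l)-1}}), \Phi(x_{j_{\pi(l)}})), \quad l = 1, \ldots, k,
\]
with at least one strict (otherwise we would be in the $\sim$ case). Summing over $l$, using the telescoping identity on each side, and noting that $\pi$ is a permutation so the right-hand sum is still $a_{j_k} - a_{j_0}$, I obtain
\[
d(\Phi(x_{i_0}), \Phi(x_{i_k})) < d(\Phi(x_{j_0}), \Phi(x_{j_k})),
\]
which by (\ref{e221}) yields $\delta(x_{i_0}, x_{i_k}) < \delta(x_{j_0}, x_{j_k})$, as required.

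The case $(i_0, \ldots, i_k) \sim (j_0, \ldots, j_k)$ is analogous: each term-wise equality of $\delta$ gives, via (\ref{e222}), a term-wise equality of Euclidean distances, and summing (again using telescoping plus the permutation $\pi$) produces $a_{i_k} - a_{i_0} = a_{j_k} - a_{j_0}$, hence $\delta(x_{i_0}, x_{i_k}) = \delta(x_{j_0}, x_{j_k})$. No step is really an obstacle here; the only thing to be careful about is the monotonicity built into $\mathcal{I}_n$, which is precisely what allows the absolute values in $|a_{i_l} - a_{i_{l-1}}|$ to be dropped so that the sum telescopes. The whole argument is essentially the one-line observation that, for points on a line indexed in order, the distance between the endpoints of a subdivision equals the sum of the lengths of the pieces.
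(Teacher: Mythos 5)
Your proof is correct and takes essentially the same approach as the paper's: enumerate the points of $X$ in the order of their images under the embedding $\Phi$ in $\mathbb{R}^1$. The paper simply asserts that this enumeration ``easily'' satisfies the majorization property, whereas you supply the telescoping identity and the summation over the permutation $\pi$ that actually justify the claim --- a fuller write-up of the same idea, not a different route.
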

\begin{proof} Let $\Phi$ be an embedding mapping for $X\hookrightarrow \mathbb R^1$.
It is easy to see that the enumeration $x_1,x_2,\ldots ,x_n$, such that $x_i$, $i=1,..,k$, are the preimages of the corresponding consecutive points in $\mathbb R^1$ under the mapping $\Phi$, satisfies the majorization property.
\end{proof}

We are not able to say whether the existence of an enumeration with majorization property is also sufficient for embeddability in $\mathbb R^1$ if $n>5$, but for $n\leq 4$ we have sufficiency.
This is trivial for $n<4$. As already mentioned, for $n=3$ the existence of a unique pair of diametrical points is equivalent to embeddability in $\mathbb R^1$.

\begin{con}
Let $(X,\delta)$ be an ordinal space with $|X|=n\in {\mathbb N^+}$ and let
there exist an enumeration $x_1, x_2,\ldots ,x_n$ of the points of the space $X$ satisfying the majorization property. Then $X\hookrightarrow \mathbb R^1$.
\end{con}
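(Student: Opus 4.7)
The plan is to reformulate embeddability as a linear-feasibility question on the gap lengths along the line and to attack infeasibility via Motzkin duality, showing that every potential obstruction violates the majorization property.

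The enumeration first forces the linear order of the images. By Proposition~\ref{p102}(iii) the pair $\{x_1,x_n\}$ is uniquely diametrical in $X$, so in any embedding $\Phi\colon X\to\RR^1$ the images $\Phi(x_1)$ and $\Phi(x_n)$ must be the two extreme points of $\Phi(X)$. Each initial segment $\{x_1,\ldots ,x_k\}$ inherits the majorization property from the inclusion $\mathcal{I}_k\subseteq\mathcal{I}_n$, so iterating the extremality argument on these subspaces forces $\Phi(x_1)<\Phi(x_2)<\cdots <\Phi(x_n)$ along $\RR^1$. It therefore suffices to find positive reals $s_1,\ldots ,s_{n-1}$, intended as the consecutive gap lengths $\Phi(x_{k+1})-\Phi(x_k)$, for which the sums $L_{ij}:=\sum_{k=i}^{j-1}s_k$ reproduce the $\delta$-preorder on the unordered pairs of $X$.

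Each pair of pairs gives a linear constraint on $s=(s_1,\ldots ,s_{n-1})$: a strict inequality $L_{kl}-L_{ij}>0$ when $\delta(x_i,x_j)<\delta(x_k,x_l)$, or an equality $L_{ij}-L_{kl}=0$ when $\delta(x_i,x_j)=\delta(x_k,x_l)$; together with $s_k>0$ one obtains a finite system of strict inequalities and equalities. By Motzkin's transposition theorem the system is infeasible if and only if there exist nonnegative rationals $\lambda_\alpha$ on the strict inequalities (not all zero), reals $\mu_\beta$ on the equalities and nonnegatives $\nu_k$ on the positivity constraints producing the identity
\begin{equation*}
\sum_\alpha \lambda_\alpha(\mathbf 1_{[k_\alpha,l_\alpha)}-\mathbf 1_{[i_\alpha,j_\alpha)})+\sum_\beta \mu_\beta(\mathbf 1_{[i'_\beta,j'_\beta)}-\mathbf 1_{[k'_\beta,l'_\beta)})+\sum_k \nu_k\,\mathbf e_k=0
\end{equation*}
of functions on $\{1,\ldots ,n-1\}$. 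After clearing denominators this becomes a balanced multiset identity of intervals, which must be refuted from the structural consequences of the majorization property.

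The critical step, which I expect to be the principal obstacle, is a combinatorial decomposition lemma: any such balanced identity must be expressible as a finite union of matched pairs of non-decreasing paths $((i_0^r,\ldots ,i_{k_r}^r),(j_0^r,\ldots ,j_{k_r}^r))\in\mathcal{I}_n\times\mathcal{I}_n$ whose consecutive sub-intervals coincide as multisets, arranged so that the $\mu_\beta$-terms correspond to $\sim$-pairs and the $\lambda_\alpha$-terms (carrying strictly positive total weight) correspond to $\prec$-pairs in the sense of the majorization definition. Granting such a decomposition, every $\sim$-pair contributes a $\delta$-equality $\delta(x_{i_0^r},x_{i_{k_r}^r})=\delta(x_{j_0^r},x_{j_{k_r}^r})$ and every $\prec$-pair a strict inequality $\delta(x_{i_0^r},x_{i_{k_r}^r})<\delta(x_{j_0^r},x_{j_{k_r}^r})$; chaining these relations along the cycle dictated by the Motzkin identity would force some $\delta$-value to be strictly less than itself, the desired contradiction. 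The difficulty is that a pointwise balance of interval multiplicities is much weaker than a pairing into monotone paths with matching sub-interval multisets and correct $\lambda/\mu$ compartments --- a problem akin to an Eulerian decomposition of a balanced multigraph, but with additional sign- and type-bookkeeping that rules out any canonical procedure. This is presumably why the conjecture is presently known only for $|X|\leqslant 4$; a direct inductive approach, extending an embedding of $\{x_1,\ldots ,x_{n-1}\}$ by placing $\Phi(x_n)$ to the right of $\Phi(x_{n-1})$, looks superficially simpler but, after one tries to guarantee that the strictly decreasing new distances $t_n-t_i$ predicted by Proposition~\ref{p102}(ii) interleave correctly with the old ones, appears to reduce to essentially the same combinatorial obstruction.
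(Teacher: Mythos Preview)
The statement you are attempting to prove is not a theorem in the paper but an open \emph{conjecture}: the paper explicitly says ``We are not able to say whether the existence of an enumeration with majorization property is also sufficient for embeddability in $\mathbb{R}^1$ if $n>5$, but for $n\leq 4$ we have sufficiency,'' and the only proof given is the case-by-case verification for $|X|=4$ in Theorem~\ref{t10}. There is therefore no proof in the paper to compare your proposal against.

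Your proposal is likewise not a proof but a programme with an explicitly acknowledged gap. The reduction to a linear-feasibility problem in the gap variables $s_1,\ldots,s_{n-1}$ and the appeal to Motzkin's transposition theorem are sound and are indeed the natural way to attack the question. The genuine obstruction is exactly where you locate it: a Motzkin certificate is a \emph{pointwise} balance of interval indicator functions, whereas the majorization hypothesis only compares two monotone index sequences whose consecutive sub-intervals match \emph{as multisets via a single permutation}. Bridging this gap requires showing that every balanced integer combination of interval indicators decomposes into a cycle of such matched pairs carrying the correct $\prec/\sim$ types --- and you have given no argument for this, only the hope that it holds. Without that decomposition lemma your plan yields nothing beyond the trivial observation that the system is either feasible or has a certificate; in particular, nothing here improves on the paper's own state of knowledge. (A minor point: your forcing of the linear order is more elaborate than necessary --- once $\Phi(x_1)$ is extreme, Proposition~\ref{p102}(ii) alone orders the remaining images by their distances to $\Phi(x_1)$.)
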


The following statement sheds some more light on the case $n=4$.

\begin{thm}\label{t10}
Let $(X,\delta)$ be an ordinal space with $|X|=4$. The following conditions are equivalent:
\begin{itemize}
  \item [(i)] $X\hookrightarrow \mathbb R^1$.
  \item [(ii)] There exists an enumeration $x_1, x_2, x_3, x_4$ of the points of the space $X$ satisfying the majorization property.
  \item[(iii)] There exists an enumeration $x_1, x_2, x_3, x_4$ of the points of the space $X$ such that the following two conditions hold:
      \begin{enumerate}
        \item $\delta(x_1, x_2)<\delta(x_1, x_3)<\delta(x_1, x_4)>\delta(x_2, x_4)>\delta(x_3, x_4)$\\ and
        $\delta(x_2, x_3)<\delta(x_1, x_3),\delta(x_2, x_4)$,
        \item $(\delta(x_1, x_3)<\delta(x_2, x_4))\Leftrightarrow(\delta(x_1, x_2)<\delta(x_3, x_4))$\\ and
            $(\delta(x_1, x_3)=\delta(x_2, x_4))\Leftrightarrow(\delta(x_1, x_2)=\delta(x_3, x_4))$.
            \end{enumerate}
\end{itemize}
\end{thm}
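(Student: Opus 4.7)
The plan is to establish $(i)\Rightarrow(ii)\Rightarrow(iii)\Rightarrow(i)$. The first arrow is Proposition~\ref{p101}. For $(ii)\Rightarrow(iii)$ I will specialize Proposition~\ref{p102} to $n=4$: item~(ii) of that proposition yields the chain
$$
\delta(x_1,x_2)<\delta(x_1,x_3)<\delta(x_1,x_4)>\delta(x_2,x_4)>\delta(x_3,x_4);
$$
item~(i) applied with $(i,k,l,j)=(1,2,3,3)$ and $(2,2,3,4)$ gives $\delta(x_2,x_3)<\delta(x_1,x_3)$ and $\delta(x_2,x_3)<\delta(x_2,x_4)$; and item~(iv) with $(i,k,j,l)=(1,2,3,4)$ provides the two equivalences in~(iii)(2).

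The main step is $(iii)\Rightarrow(i)$. Any candidate embedding $\Phi\colon X\to\RR^1$ can be normalized so that $\Phi(x_1)=0$, $\Phi(x_4)=1$, $\Phi(x_2)=s$, $\Phi(x_3)=t$ with $0<s<t<1$. Introducing the gap variables $a=s$, $b=t-s$, $c=1-t$ in the open simplex $\Delta^\circ=\{(a,b,c)\in\RR^3 : a,b,c>0,\; a+b+c=1\}$, the six Euclidean distances become $d_{12}=a$, $d_{23}=b$, $d_{34}=c$, $d_{13}=a+b$, $d_{24}=b+c$, $d_{14}=1$. The elementary relations $a,b<a+b<1$ and $b,c<b+c<1$ show that any $(a,b,c)\in\Delta^\circ$ automatically satisfies~(iii)(1), and the identity $(a+b)-(b+c)=a-c=d_{12}-d_{34}$ makes the equivalences of~(iii)(2) automatic as well. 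The problem reduces to locating $(a,b,c)\in\Delta^\circ$ matching the ordinal comparisons on the remaining pairs.

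I then organize the remaining work by the trichotomy in~(iii)(2). If $p_{12}=p_{34}$, I impose $a=c=(1-b)/2$ and use $b\in(0,1)$ to realize the single surviving free comparison $p_{23}$ versus $p_{12}$, controlled by the sign of $b-1/3$. If $p_{12}<p_{34}$ (so $a<c$), then $a+c<1$ forces $a<1/2$, making $p_{12}<p_{24}$ automatic, and exactly three independent comparisons remain — $p_{12}$ vs $p_{23}$, $p_{23}$ vs $p_{34}$, and $p_{13}$ vs $p_{34}$ — which are governed respectively by $a$ vs $b$, $b$ vs $c$, and $c$ vs $1/2$. A finite case analysis, partitioning $\{a<c\}\cap\Delta^\circ$ by the lines $a=b$, $b=c$, $c=1/2$, produces in each admissible sub-case an explicit triple $(a,b,c)$. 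The mirror case $p_{12}>p_{34}$ reduces to the previous one by reversing the enumeration to $x_4,x_3,x_2,x_1$, which still satisfies the majorization property. The chief obstacle is the bookkeeping in this last step: I must verify that~(iii)(1) together with~(iii)(2) eliminate every weak ordering on the six pairs that cannot be realized by some $(a,b,c)\in\Delta^\circ$, so that no admissible configuration is missed.
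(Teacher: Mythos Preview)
Your proposal is correct and follows essentially the same route as the paper: both parametrize a putative embedding by consecutive gaps $a,b,c$ and reduce $(iii)\Rightarrow(i)$ to a finite case check over the possible total orders on $\{\delta_{ij}\}$ compatible with conditions (1) and (2). The paper uses free $a,b,c>0$ and lists all $3+11$ cases explicitly with the matching inequalities on $a,b,c$; you normalize to $a+b+c=1$, identify the three residual comparisons as being governed by the signs of $a-b$, $b-c$, $c-\tfrac12$, and invoke the reflection $x_1,x_2,x_3,x_4\mapsto x_4,x_3,x_2,x_1$ for the case $p_{12}>p_{34}$ (the paper instead says ``analogously''). One small slip: in that mirror step you are working under hypothesis~(iii), not~(ii), so you should say the reversed enumeration again satisfies condition~(iii) rather than ``the majorization property''; the symmetry of conditions (1) and (2) under index reversal is immediate.
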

\begin{proof}
The implications (i)$\Rightarrow$(ii)  and (ii)$\Rightarrow$(iii) follow immediately from Propositions~\ref{p101} and~\ref{p102}.

(iii)$\Rightarrow$(i): Let us assume the validity of (iii). Set $\delta_{ij}:=\delta(x_i, x_j)$ for $i,j\in\{1,2,3,4\}$ and for positive real numbers $a,b,c$ define a map $\Phi\colon X\to \RR^1$ by the following rule:
\begin{equation}\label{e123}
\Phi(x_1)=0, \ \Phi(x_2)=a, \ \Phi(x_3)=a+b, \ \Phi(x_4)=a+b+c.
\end{equation}
We show that $\Phi$ is an embedding if $a,b$ and $c$ are chosen in an appropriate way. For this we distinguish the several cases of relationships not being fixed by conditions (1) and (2).

If $\delta_{13}=\delta_{24}$, by (2) it follows $\delta_{12}=\delta_{34}$ and we have the following possibilities:
\begin{eqnarray}
\delta_{14}>\delta_{24}=\delta_{13}>\delta_{23}>\delta_{12}=\delta_{34},\label{d1}\\
\delta_{14}>\delta_{24}=\delta_{13}>\delta_{23}=\delta_{12}=\delta_{34},\label{d2}\\\
\delta_{14}>\delta_{24}=\delta_{13}>\delta_{12}=\delta_{34}>\delta_{23}.\label{d3}
\end{eqnarray}
The mapping $\Phi$ is an embedding if $b>a=c$, $a=b=c$, or $a=c>b$, respectively, for \eqref{d1}, \eqref{d2}, or \eqref{d3}, respectively.
Since all the relations between $a,b$ and $c$ are possible, in each of the three cases one gets embeddability.

We now consider the case $\delta_{13}>\delta_{24}$ and obtain the following possibilities:
\begin{eqnarray}
\delta_{14}>\delta_{13}>\delta_{12}>\delta_{24}>\delta_{23}>\delta_{34},\label{d4}\\
\delta_{14}>\delta_{13}>\delta_{12}>\delta_{24}>\delta_{23}=\delta_{34},\label{d5}\\
\delta_{14}>\delta_{13}>\delta_{12}>\delta_{24}>\delta_{34}>\delta_{23},\label{d6}\\
\delta_{14}>\delta_{13}>\delta_{12}=\delta_{24}>\delta_{23}>\delta_{34},\label{d7}\\
\delta_{14}>\delta_{13}>\delta_{12}=\delta_{24}>\delta_{23}=\delta_{34},\label{d8}\\
\delta_{14}>\delta_{13}>\delta_{12}=\delta_{24}>\delta_{34}>\delta_{23},\label{d9}\\
\delta_{14}>\delta_{13}>\delta_{24}>\delta_{12}>\delta_{23}>\delta_{34},\label{d10}\\
\delta_{14}>\delta_{13}>\delta_{24}>\delta_{12}>\delta_{23}=\delta_{34},\label{d11}\\
\delta_{14}>\delta_{13}>\delta_{24}>\delta_{12}>\delta_{34}>\delta_{23},\label{d12}\\
\delta_{14}>\delta_{13}>\delta_{24}>\delta_{12}=\delta_{23}>\delta_{34},\label{d13}\\
\delta_{14}>\delta_{13}>\delta_{24}>\delta_{23}>\delta_{12}>\delta_{34}.\label{d15}
\end{eqnarray}
The mapping $\Phi$ is an embedding for
\eqref{d4} iff $a>b+c$ and $b>c$,
\eqref{d5} iff $a>b+c$ and $b=c$,
\eqref{d6} iff $a>b+c$ and $b<c$,
\eqref{d7} iff $a=b+c$ and $b>c$,
\eqref{d8} iff $a=b+c$ and $b=c$,
\eqref{d9} iff $a=b+c$ and $b<c$,
\eqref{d10} iff $a<b+c$ and $a>b>c$,
\eqref{d11} iff $a<b+c$ and $a>b=c$,
\eqref{d12} iff $a<b+c$ and $b<c<a$,
\eqref{d13} iff $a=b>c$,
and \eqref{d15} iff $b>a>c$.

Again all the relations between $a,b$ and $c$ are realizable, showing embeddability in each case considered.
The case $\delta_{13}<\delta_{24}$ can by considered analogously to the case $\delta_{13}>\delta_{24}$.
\end{proof}

The proof of Theorem \ref{t10} in fact contains a listing of all nonisomorphic ordinal spaces of cardinality $4$. It particularly provides that there exist
$14$ such spaces. Here to each case with $\delta_{13}>\delta_{24}$ there is a corresponding isomorphic one with $\delta_{13}<\delta_{24}$.\vspace{3mm}

\textbf{Open problem.} \emph{How many non-isomorphic ordinal spaces $X$ with $|X|=n$ does there exist for fixed $n\in \mathbb N^+$?}\vspace{3mm}

Note that an enumeration of the points of an ordinal space is majorizing iff the inverse enumeration is, and that by Proposition \ref{p102} (i)
there are no further majorizing enumerations. The majorization property is relatively complicated since it not only compares ``intervals'' between successive points under the enumeration. The following example shows that the property cannot be expressed on the level of ``intervals'' with successive endpoints.

\begin{exa}
Let $x_1,x_2,\ldots ,x_7$ be points in ${\mathbb R}^1$ with $x_1< x_2<\ldots < x_6<x_7$ and $d(x_1,x_2)=2.00001$, $d(x_2,x_3)=2.000001$,
$d(x_3,x_4)=4.001$, $d(x_4,x_5)=1.1$, $d(x_5,x_6)=3.01$, and $d(x_6,x_7)=4.0001$, where $d$ denotes the Euclidean distance.

One easily sees that all distances between distinct points in $X=\{x_1,x_2,\ldots ,x_7\}$ are different, and with $z_{\{i,j\}}$ being the nearest integer to $d(x_i,x_j)$  for different $i,j\in\{1,2,\ldots ,7\}$ it holds $|d(x_i,x_j)-z_{\{i,j\}}|<0.2$. Moreover, $z_{\{i,j\}}=8$ iff $\{i,j\}\in\{\{1,4\},\{4,7\},\{3,6\}\}$, and $d(x_1,x_4)=8.001011$,  $d(x_4,x_7)=8.1101$, and $d(x_3,x_6)=8.111$.

Consider the semimetric $\delta$ on $X$ defined by
$$
\delta (x,x')=
\begin{cases}
d(x_4,x_7), &\text{if} \ \, \{x,x'\}=\{x_1,x_4\};\\
d(x_1,x_4), &\text{if} \ \, \{x,x'\}=\{x_4,x_7\};\\
d(x,x'), &\text{otherwise.}
\end{cases}
$$

Assuming that there exists an enumeration of the points in $X$ satisfying the majorization property with respect to $\delta$, then by Proposition \ref{p102} (i) the already given enumeration is majorizing. For this enumeration it holds
$(1,3,4)\prec (4,6,7)$ since $\delta(x_1,x_3)<\delta(x_6,x_7)$ and $\delta(x_3,x_4)<\delta(x_4,x_6)$, but
$\delta(x_1,x_4)=8.1101 > 8.001011=\delta(x_4,x_7)$, contradicting majorizing.

In order to show that the enumeration satisfies the majorization property in the weaker sense that only ``intervals'' with successive endpoints are considered, it is enough only to consider the sequences $(1,2,3,4)$ and $(4,5,6,7)$. The reason for this is that the order relation of $\delta(x_i,x_j)$ and $\delta(x_k,x_l)$ can be only different from that of $d(x_i,x_j)$ and $d(x_k,x_l)$ if
$\{\{i,j\},\{k,l\}\}=\{\{1,4\},\{4,7\}\}$. We are done by showing that neither $(1,2,3,4)\prec (4,5,6,7)$ nor $(4,5,6,7)\prec (1,2,3,4)$.

The case $(1,2,3,4)\prec (4,5,6,7)$ is impossible since $\delta(x_4,x_5), \delta(x_5,x_6),\delta(x_6,x_7)<\delta(x_3,x_4)$. Assuming $(4,5,6,7)\prec (1,2,3,4)$, it follows $\delta(x_4,x_5)<\delta(x_1,x_2)$, $\delta(x_5,x_6)<\delta(x_2,x_3)$ or $\delta(x_5,x_6)<\delta(x_1,x_2)$, $\delta(x_4,x_5)<\delta(x_2,x_3)$, which is obviously false.
\end{exa}

We finish discussing embeddability of finite ordinal spaces $(X,\delta)$ in ${\mathbb R}^1$ by looking at the system of equivalence classes  generated by the relation `$=$' on the set of unordered pairs $\{x,y\}$, $x\neq y$, $x,y \in X$ (see Remark~\ref{r1.3}), its cardinality, and the cardinality of the equivalence classes themselves. The set of these equivalence classes
is denoted below by $\Delta(X)$. For the equivalence classes $\delta_i$ and $\delta_j$ we write $\delta_i>\delta_j$ if $\delta(x,y)>\delta(z,w)$ for some $\{x,y\}\in \delta_i$ and some $\{z,w\}\in \delta_j$.

\begin{prop}\label{p6.8}
Let $(X,\delta)$ be a finite ordinal space, $|X|\geqslant 2$, and $X\hookrightarrow \RR^1$. Then the following conditions are equivalent:
\begin{enumerate}
  \item [(i)] $|\Delta(X)|=|X|-1$.
  \item [(ii)] If $\Delta(X)=\{\delta_1,\delta_2,\ldots ,\delta_k\}$ and
      $$
      \delta_1>\delta_2>\ldots >\delta_k,
      $$
      then $|\delta_k|=|X|-1$.
  \item [(iii)]  If $\Delta(X)=\{\delta_1,\delta_2,\ldots ,\delta_k\}$ and
      $$
      \delta_1>\delta_2>\ldots >\delta_k,
      $$
      then $|\delta_i|=i$ for every $i=1,\ldots ,k$.
\end{enumerate}
\end{prop}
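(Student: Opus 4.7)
The plan is to fix an embedding $\Phi\colon X\to\RR^1$ (which exists by assumption) and, via Proposition~\ref{p101}, label the points so that $\Phi(x_1)<\Phi(x_2)<\cdots<\Phi(x_n)$ and the enumeration has the majorization property. Writing $g_i=\Phi(x_{i+1})-\Phi(x_i)$ for the consecutive gaps, the $\delta$-value of a pair $\{x_i,x_j\}$ with $i<j$ is encoded by the sum $g_i+\cdots+g_{j-1}$, and two pairs lie in the same $\delta$-class iff the corresponding sums coincide. I will call $\{x_i,x_{i+t}\}$ a level-$t$ pair; there are $n-t$ such pairs, and condition (iii) amounts exactly to saying that the $\delta$-classes coincide with these levels.

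The implications $(iii)\Rightarrow(i)$ and $(iii)\Rightarrow(ii)$ will be routine, since $1+2+\cdots+k=\binom{n}{2}$ forces $k=n-1$ when $|\delta_i|=i$, and then $|\delta_k|=k=n-1$. For $(ii)\Rightarrow(iii)$ I would argue as follows. By Proposition~\ref{p102}(i), any pair of level $\geqslant 2$ strictly dominates some of its level-$1$ subpairs, so the minimum class $\delta_k$ contains only level-$1$ pairs. Since there are exactly $n-1$ such pairs, the hypothesis $|\delta_k|=n-1$ forces all level-$1$ pairs to share a single $\delta$-value; equivalently $g_1=\cdots=g_{n-1}$, so $\Phi(X)$ is an arithmetic progression. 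Then every level-$t$ pair has a common $\delta$-value depending only on $t$, and these values are strictly ordered by level by Proposition~\ref{p102}(ii); so the classes coincide with the levels and (iii) follows.

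The hard part will be $(i)\Rightarrow(iii)$, for which I would invoke (and briefly reprove) the classical fact that an $n$-element subset of $\RR$ has at least $n-1$ distinct positive pairwise differences, with equality precisely for arithmetic progressions. The lower bound is witnessed by the chain $g_1<g_1+g_2<\cdots<g_1+\cdots+g_{n-1}$. For the equality case I would compare this chain with the symmetric chain $g_{n-1}<g_{n-1}+g_{n-2}<\cdots<g_1+\cdots+g_{n-1}$; both are strictly increasing sequences of length $n-1$ and, under (i), both enumerate all distinct $\delta$-values in order, so they agree termwise and yield the palindrome identities $g_i=g_{n-i}$. A short induction then matches each inner difference $g_i$ (and subsequently each $g_i+g_{i+1}$, etc.) to some cumulative sum $g_1+\cdots+g_l$ from the chain based at $x_1$; elementary size comparisons rule out $l\geqslant 2$, forcing $g_i=g_1$ throughout. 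Thus $\Phi(X)$ is an arithmetic progression and (iii) follows as in the previous paragraph. The main obstacle is precisely this last induction: the palindrome relations alone are not sufficient, and one must exploit the interaction between the inner differences and the chain at $x_1$ to force full equispacing.
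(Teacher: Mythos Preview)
Your overall plan and your handling of $(\mathrm{iii})\Rightarrow(\mathrm{i})$, $(\mathrm{iii})\Rightarrow(\mathrm{ii})$ and $(\mathrm{ii})\Rightarrow(\mathrm{iii})$ are correct and in line with the paper, which runs the cycle $(\mathrm{iii})\Rightarrow(\mathrm{ii})\Rightarrow(\mathrm{i})\Rightarrow(\mathrm{iii})$ via the same observations (in particular, $(\mathrm{ii})$ forces $\delta_k$ to consist of all $n-1$ consecutive pairs, hence all gaps are equal).

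For $(\mathrm{i})\Rightarrow(\mathrm{iii})$, however, your route is more circuitous than needed and the decisive step is left unfinished. The palindrome identities $g_i=g_{n-i}$ are correct but superfluous, and, as you yourself note, they do not by themselves yield equispacing. Your ``short induction'' can be completed, but only by bringing in the chain from $x_2$: assuming $g_1=\cdots=g_{i-1}$, the distance $d(x_2,x_{i+1})=(i-2)g_1+g_i$ lies in $S=\{s_1,\ldots,s_{n-1}\}$ (with $s_l=g_1+\cdots+g_l$) and is strictly below $s_i$, hence $\leqslant s_{i-1}=(i-1)g_1$, giving $g_i\leqslant g_1$; combined with $g_i\geqslant\min S=g_1$ this forces $g_i=g_1$. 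The palindrome plays no role in this step, so that detour can be dropped.

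The paper achieves the same end in one pigeonhole stroke rather than by induction, and without ever invoking the arithmetic-progression characterisation. Under $(\mathrm{i})$ the $n-1$ pairwise distinct distances $d(x_1,x_j)$ fill the classes $\delta_1>\cdots>\delta_{n-1}$ exactly, so $\{x_1,x_j\}\in\delta_{n+1-j}$. The $n-2$ pairwise distinct distances $d(x_2,x_j)$ ($j=3,\ldots,n$) are all $<d(x_1,x_n)$, so by pigeonhole they fill $\delta_2,\ldots,\delta_{n-1}$ exactly, giving $\{x_2,x_j\}\in\delta_{n+2-j}$; since every remaining pair has first index $\geqslant 3$ and hence $\delta$-value $<d(x_2,x_n)$, one reads off $\delta_2=\{\{x_1,x_{n-1}\},\{x_2,x_n\}\}$. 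Iterating with the chains from $x_3,x_4,\ldots$ gives $|\delta_i|=i$ directly, sidestepping precisely the obstacle you flagged.
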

\begin{proof}
We can assume that $X=\{x_1,x_2,\ldots ,x_n\}\subset {\mathbb R^1}$ for a natural number $n$, that
$x_1<x_2<\ldots <x_n$, and that $\delta$ is defined by the Euclidean distance.

If $\Delta(X)=\{\delta_1,\delta_2,\ldots ,\delta_k\}$ with
$\delta_1>\delta_2>\ldots >\delta_k$ and $|\delta_i|=i$ for every $i=1,2,\ldots ,k$, then we have $k(k+1)/{2}=n(n-1)/2$ pairs of different points from $X$ implying $k=n-1$. This shows the implication
(iii)$\Rightarrow$(ii). If (ii) is valid, it obviously holds that $\delta_k=\{\{x_1,x_2\},\{x_2,x_3\},\ldots ,\{x_{n-1},x_n\}\}$, which shows condition (i).

(i)$\Rightarrow$(iii). Assume that condition (i) holds, and let $\Delta(X)=\{\delta_1,\delta_2,\ldots ,\delta_{n-1}\}$ with $\delta_1>\delta_2>\ldots >\delta_{n-1}$. Then the pairs
$$
\{x_1,x_2\},\dots,\{x_1,x_{n-1}\},\{x_1,x_n\}
$$
belong to different ones of the given equivalence classes, more precisely,
$$
\{x_1,x_n\} \in \delta_1,\, \ \{x_1,x_{n-1}\}\in \delta_2,\, \ \dots,\{x_1,x_{2}\}\in \delta_{n-1}.
$$
Since $\{x_1,x_n\}$ is a single diametrical pair in $X$, it is clear that
$$
\delta_1=\{\{x_1,x_{n}\}\}.
$$
Arguing as above we see that
$$
\{x_2,x_{n}\} \in \delta_2,\, \ \{x_2,x_{n-1}\}\in \delta_3,\, \ \dots,\{x_2,x_{3}\}\in \delta_{n-1}.
$$
Since for all pairs $\{x_i,x_j\}$ which are not considered above the equality $\delta(x_i,x_j)<\delta(x_2,x_{n})$ holds, we obtain that
$$
\delta_2=\{\{x_1,x_{n-1}\},\{x_2,x_{n}\}\}.
$$
Repeating this procedure with the pairs
$$
\{x_i,x_{n}\},\dots,\{x_i,x_{n-1}\},\{x_i,x_{i+1}\}
$$
for every $i=3,\ldots ,n-1$, we establish condition (iii).
\end{proof}

\begin{prop}\label{p210}
Let $(X,\delta)$ be a finite ordinal space, $|X|\geqslant 2$, and $X \hookrightarrow \RR^1$. Then the following conditions hold:
\begin{enumerate}
  \item [(i)] $|\Delta(X)|\geqslant|X|-1$.
  \item [(ii)] If $\Delta(X)=\{\delta_1,\delta_2,\ldots ,\delta_k\}$ and
      $$
      \delta_1>\delta_2>\ldots >\delta_k,
      $$
      then
      \begin{equation}\label{e211}
        |\delta_1|=1, \, \ |\delta_i|\leqslant i,\ \, i=2,\ldots ,k.
      \end{equation}
\end{enumerate}
\end{prop}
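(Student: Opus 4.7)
The plan is to use the embedding $X \hookrightarrow \RR^1$ to reduce both statements to purely combinatorial claims about a finite subset of the real line. Choose an embedding $\Phi$ and relabel so that $X = \{x_1, x_2, \ldots, x_n\}$ with $x_1 < x_2 < \ldots < x_n$, with $\delta$ induced by the Euclidean distance, so $d(x_i,x_j) = x_j - x_i$ whenever $i < j$. Since $\delta(x_i, x_j) < \delta(x_k, x_l)$ corresponds exactly to $d(x_i, x_j) < d(x_k, x_l)$, everything about the equivalence classes in $\Delta(X)$ translates into comparisons of real numbers.

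Part (i) should then be immediate: the distances $d(x_1, x_l) = x_l - x_1$ for $l = 2, \ldots, n$ are strictly increasing in $l$, so the $n-1$ pairs $\{x_1, x_l\}$ belong to $n-1$ pairwise distinct equivalence classes, whence $|\Delta(X)| \geq n-1$. For $|\delta_1| = 1$, the maximum distance is $x_n - x_1$ and is attained only by $\{x_1, x_n\}$, since $x_j - x_i < x_n - x_1$ for any other pair; hence $\delta_1 = \{\{x_1, x_n\}\}$.

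The main content is the bound $|\delta_i| \leq i$ for $i \geq 2$. Let $m = |\delta_i|$ and write $\delta_i = \{\{x_{i_l}, x_{j_l}\} : l = 1, \ldots, m\}$ with each $i_l < j_l$. The first key step is the \emph{antichain} property: two distinct pairs in the same class cannot satisfy $i \leq i'$ and $j' \leq j$, because then $x_{j'} - x_{i'} < x_j - x_i$ would contradict equality of distances. Consequently, after reordering by $i$, the indices satisfy $i_1 < i_2 < \ldots < i_m$ \emph{and} $j_1 < j_2 < \ldots < j_m$ simultaneously.

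The second (and decisive) step is an injection into a set of size $i$. Consider the auxiliary pairs $\{x_1, x_{j_l}\}$, $l = 1, \ldots, m$; these are distinct because the $j_l$'s are, and their distances $x_{j_l} - x_1$ are strictly increasing in $l$, so they lie in $m$ pairwise distinct equivalence classes. Moreover each such distance satisfies $x_{j_l} - x_1 \geq x_{j_l} - x_{i_l} = d_i$ (the common value on $\delta_i$), so the class of $\{x_1, x_{j_l}\}$ has index at most $i$. Thus $m$ distinct class indices fit into $\{1, 2, \ldots, i\}$, forcing $m \leq i$. I expect no real obstacle beyond isolating the antichain observation — once that is noted, the "reference column" $\{x_1, x_{j_l}\}$ does all the work, and the rest is bookkeeping.
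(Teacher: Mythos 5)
Your proof is correct, and it takes a genuinely different route from the paper's. The paper proves both (i) and (ii) by induction on $|X|$: it deletes the leftmost point $x_0$, invokes the induction hypothesis on $Y=X\setminus\{x_0\}$, and observes that each equivalence class of $X$ is either a class of $Y$, a singleton $\{\{x_0,x_j\}\}$, or a class of $Y$ augmented by exactly one pair $\{x_0,x_j\}$ (at most one, since the distances $x_j-x_0$ are pairwise distinct); this yields $|\delta_i(X)|\leqslant 1+|\delta_p(Y)|\leqslant 1+p\leqslant i$ with $p<i$. You instead argue directly: the antichain observation shows that the pairs of a fixed class $\delta_i$ have pairwise distinct right endpoints $j_1<\ldots<j_m$, and the reference pairs $\{x_1,x_{j_l}\}$ then lie in $m$ pairwise distinct classes each of distance at least the common distance of $\delta_i$, hence of index at most $i$, forcing $m\leqslant i$. (Strictly speaking you only need distinctness of the $j_l$'s, not the full antichain property, but the latter is how you obtain it.) Your part (i) and the identification $\delta_1=\{\{x_1,x_n\}\}$ coincide in substance with what the paper uses. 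What each approach buys: yours avoids induction entirely and makes the bound $|\delta_i|\leqslant i$ visibly a pigeonhole statement about interval right-endpoints, which is arguably more transparent; the paper's inductive peeling of the leftmost point treats (i) and (ii) uniformly and is the same mechanism it reuses in the neighbouring Proposition~\ref{p6.8}.
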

\begin{proof}
We shall prove conditions (i) and (ii) by induction on $|X|$.
If $|X|=2$, then (i) and (ii) are true. Now let $n\geqslant 2$ be such that condition (i) holds for every $X$ with $|X|\leqslant n$ and let $|X|=n+1$. We have to prove that
\begin{equation}\label{e210}
  |\Delta(X)|\geqslant n.
\end{equation}

Let $\{x_0,y_0\}$ be the diametrical pair of $X$, i.e., $\{x_0,y_0\}\in \delta_1(X)$. Consider the space $(Y,\delta)$ with $Y=X\setminus\{x_0\}$. Since $X \hookrightarrow \RR^1$, we have $Y \hookrightarrow \RR^1$ and according to assumption $|\Delta(Y)|\geqslant|Y|-1=n-1$. It is clear that $\Delta(Y)\subseteq \Delta(X)$ and $\delta_1(X)\notin \Delta(Y)$. Hence inequality~(\ref{e210}) follows.

Let $n\geqslant 2$ be such that condition (ii) holds for every $X$ with $|X|\leqslant n$ and let $|X|=n+1$.
We can assume that $X=\{x_0,x_1,\ldots,x_n\}\subset \RR^1$ with $x_0<x_1<\ldots<x_n$. Then $\{x_0,x_n\}\in \delta_1(X)$. Consider the space $(Y,\delta)$ with $Y=X\setminus\{x_0\}$. Since $X \hookrightarrow \RR^1$, we have $Y \hookrightarrow \RR^1$ and according to assumption
\begin{equation}\label{e212}
        |\delta_1(Y)|=1, \, \ |\delta_i(Y)|\leqslant i,\ \,  i=2,\ldots ,k,
\end{equation}
where $k=|\Delta(Y)|$.

It is clear that for every $\delta_Y\in \Delta(Y)$ there exists a $\delta_X \in  \Delta(X)$ such that $\delta_Y\subseteq \delta_X$.
Consider the pairs of points
$$
\{x_0,x_1\}, \{x_0,x_2\}, \ldots , \{x_0,x_n\}.
$$
Note that all these pairs belong to different classes of $\Delta(X)$. Let $N_0$ be the set of all the indices $i\in \{1,\ldots ,n\}$ for which $\delta(x_0,x_i)\neq\delta(x_p,x_q)$ whenever $1\leqslant p<q\leqslant n$. It is clear that $n\in N_0$,
$\delta_1(X)\notin \Delta(Y)$ and
\begin{equation}\label{e219}
  \delta_1(X)=\{\{x_0,x_n\}\}.
\end{equation}
Let
 $$
    \delta_1(X)>\delta_2(X)>\ldots >\delta_l(X)
 $$
be the equivalence classes of the space $X$, $l>k$.
Observe that there exists only three possibilities for the class $\delta_i(X)$, namely
$$
\delta_i(X)=
\begin{cases}
\{\{x_0,x_j\}\}, &\text{for some  }\  j\in N_0 \ \text{ or }\\
\{\{x_0,x_{j}\}\}\cup \delta_p(Y), &\text{for some  }\  j\in N \! \setminus \! N_0 \ \text{and some  }\ p<i  \ \text{ or }\\
\delta_p(Y), &\text{for some }\ p<i.
\end{cases}
$$
Hence, taking into considerations relations~(\ref{e212}) we
have
$$
|\delta_i(X)|\leqslant 1+|\delta_p(Y)|\leqslant 1+p\leqslant i.
$$
This inequality and~(\ref{e219}) establish relations~(\ref{e211}).
\end{proof}

\section{Embedding of ordinal spaces in higher dimensional Euclidean spaces}\label{hdes}

\noindent In 1928 K.~Menger proved in \cite{Me28} that in order to verify whether an abstract semimetric space $X$ is embeddable in $\RR^n$, one only needs to verify the embeddability of each of its $n+3$ point subsets. This result is a consequence of a more general theorem which is beyond the scope of the current paper.

Naturally the following question arises: Does the analog of Menger's consequence hold for ordinal spaces?

\begin{con}
Let $(X,\delta)$ be a finite ordinal space and let \mbox{$n\in \NN^+$}. Then the following conditions are equivalent:
\begin{itemize}
  \item [(i)] $X \hookrightarrow \mathbb R^n$,
  \item [(ii)] For every $A\subseteq X$ with $|A|\leqslant n+3$ it holds $A \hookrightarrow \mathbb R^n$.
\end{itemize}
\end{con}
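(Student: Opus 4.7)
The forward direction (i)$\Rightarrow$(ii) is immediate: if $\Phi\colon X\to\RR^n$ witnesses $X\hookrightarrow\RR^n$, then for any $A\subseteq X$ the restriction $\Phi|_A$ satisfies~(\ref{e221})--(\ref{e223}) on $A$, so $A\hookrightarrow\RR^n$.

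For the converse, the plan is to argue by induction on $|X|$. The base case $|X|\leqslant n+3$ is immediate from~(ii) applied with $A=X$. For the inductive step, assuming the result for ordinal spaces of cardinality $\leqslant m$ with $m\geqslant n+3$, take $|X|=m+1$, pick any $x_0\in X$, and set $Y=X\setminus\{x_0\}$. Every $(n+3)$-point subset of $Y$ is a subset of $X$ and thus embeds in $\RR^n$; the induction hypothesis then produces an embedding $\Phi_Y\colon Y\hookrightarrow\RR^n$. The whole problem reduces to exhibiting a point $p\in\RR^n$ such that extending by $\Phi(x_0)=p$ and $\Phi|_Y=\Phi_Y$ still defines an embedding $\Phi\colon X\hookrightarrow\RR^n$.

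The set $S\subseteq\RR^n$ of admissible positions for $p$ is cut out by ordinal constraints of two kinds. Comparisons of $\delta(x_0,y)$ with $\delta(x_0,z)$ for $y,z\in Y$ impose (non-)strict linear inequalities on $p$ via perpendicular bisectors of the points $\Phi_Y(y),\Phi_Y(z)$. Comparisons of $\delta(x_0,y)$ with $\delta(u,v)$ for $y,u,v\in Y$ impose that $p$ lies inside, outside, or on a Euclidean sphere centered at $\Phi_Y(y)$ of radius $\|\Phi_Y(u)-\Phi_Y(v)\|$. The hypothesis guarantees that any $(n+2)$-subset of $Y$ together with $x_0$ yields simultaneously satisfiable constraints, since the corresponding $(n+3)$-point subset of $X$ admits an embedding. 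However, transferring that embedding into constraints on $p$ requires first aligning it with $\Phi_Y$, a priori only up to a rigid motion of $\RR^n$ and perhaps only when $\Phi_Y$ is chosen in general position; this alignment is one of the delicate technical points of the argument and may force $\Phi_Y$ to be replaced by a more flexible choice supplied by the induction.

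The central obstacle is the passage from local $(n+2)$-wise realizability to global non-emptiness of $S$. The half-space constraints alone are convex, and Helly's theorem in $\RR^n$ handles them using $(n+1)$-wise intersections. The ball constraints, however, are non-convex, since the exterior of a Euclidean ball is not convex, and classical Helly does not apply. What is needed is a Helly-type theorem for families of sets defined by signed comparisons of squared Euclidean distances, with Helly number precisely $n+2$ so as to match the bound $n+3$ in the conjecture. An alternative route would parametrize the whole space of realizations of $(X,\delta)$ as a semi-algebraic subset of $\RR^{\binom{|X|}{2}}$, apply Menger's classical theorem to a suitably chosen realization, and use a compactness or connectivity argument to patch the local embeddings into a global one. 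Identifying the combinatorial mechanism that singles out $n+3$ as the critical threshold, in the absence of the Cayley--Menger rank condition that powers the classical Menger theorem, is what makes the conjecture genuinely hard.
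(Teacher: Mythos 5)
This statement is stated in the paper as a \emph{conjecture}, not a theorem: the authors explicitly remark that ``the implication (i) $\Rightarrow$ (ii) is trivial but the converse implication is not trivial, even in the case $n=1$,'' and they offer no proof of (ii) $\Rightarrow$ (i). Your proposal correctly disposes of the trivial direction, but it does not prove the converse, and you candidly say as much. The two load-bearing steps of your induction are both left open: first, the alignment problem --- the embedding of an $(n+3)$-point subset guaranteed by (ii) is a priori unrelated to the inductively constructed $\Phi_Y$, and nothing forces it to agree with $\Phi_Y$ even up to rigid motion, so the ``local'' realizability does not directly translate into constraints on the position of the new point $p$; second, the non-emptiness of the admissible region $S$, which is an intersection of half-spaces, open balls, complements of closed balls, and spheres, and for which no Helly-type theorem with Helly number $n+2$ is known or supplied. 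There is also a quantifier issue your sketch glosses over: the induction hypothesis gives you \emph{some} embedding $\Phi_Y$, but the extendability of $\Phi_Y$ by one point depends on which $\Phi_Y$ you chose, so the argument would have to range over the entire (semialgebraic, generally disconnected) space of realizations of $(Y,\delta|_Y)$ and show that at least one admits an extension --- a step your compactness/connectivity remark gestures at but does not carry out.

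That said, your diagnosis of \emph{why} the problem is hard is sound and consistent with the paper's own framing: the classical Menger argument leans on the Cayley--Menger rank conditions, which are quantitative and unavailable when only the order type of the distances is given. Your observation that the convex (bisector) constraints alone would be handled by Helly, while the spherical/antiball constraints destroy convexity, correctly isolates the combinatorial mechanism that would have to replace Cayley--Menger. As a proof the proposal is incomplete --- it establishes only what the paper already calls trivial --- but as an analysis of the obstruction it is accurate and would be a reasonable starting point for attacking the conjecture.
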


Note that the implication (i) $\Rightarrow$ (ii) is trivial but the converse implication is not trivial, even in the case $n=1$.

Denote by $\DP(X)$ the set of all diametrical pairs of points of the ordinal space $X$, i.e., pairs of points $x,y$ with maximal possible $\delta(x,y)$. Let $S$ be a set of points on the plane.
Consider line segments in this plane with endpoints $x$ and $y$, $x,y \in S$, such that the distance between $x$ and $y$ is equal to some fixed positive real number $d$. Denote by $S^d$ the geometric object which is the union of all such line segments. We shall say that $S^d$ contains a closed polygonal chain if such a chain can be obtained from $S^d$ by deleting some number of line segments.

Recall that a Reuleaux polygon is a plane convex set of constant width $d$ whose boundary consists of a finite (necessarily odd) number of circular arcs of radius $d$, where the center of each circle arc is located at the end of one of the arcs. We shall say that an odd closed polygonal chain consisting of segments of length $d$ and joining vertices of some Reuleaux polygon of width $d$ is a frame of this Reuleaux polygon, see Figure~\ref{f4}.

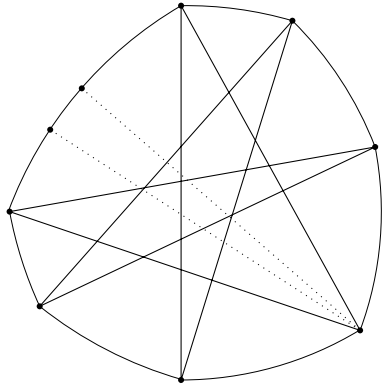
\begin{figure}[ht]
\begin{center}
\begin{tikzpicture}[scale=0.2]
\draw (2.2,5.5) -- (19,24.5) -- (11.6,0.6) -- (11.6,25.5) -- (23.5,3.9)  -- (0.2,11.8) -- (24.5,16.1) -- cycle;
\draw[dotted] (23.5,3.9) -- (5,20);
\draw[dotted] (23.5,3.9) -- (2.9,17.25);

\tkzDefPoint(11.6,25.5){A}
\tkzDefPoint(5,20){B}
\tkzDefPoint(0.2,11.8){C}
\tkzDefPoint(25,2.6){O}
\tkzDrawArc(O,A)(C)

\tkzDefPoint(19,24.5){A}
\tkzDefPoint(11.6,25.5){C}
\tkzDefPoint(12,0.6){O}
\tkzDrawArc(O,A)(C)

\tkzDefPoint(24.5,16.1){A}
\tkzDefPoint(21.8,20.8){B}
\tkzDefPoint(19,24.5){C}
\tkzDefPoint(2.2,7.5){O}
\tkzDrawArc(O,A)(C)

\tkzDefPoint(23.5,3.9){A}
\tkzDefPoint(24.9,11){B}
\tkzDefPoint(24.5,16.1){C}
\tkzCircumCenter(A,B,C)\tkzGetPoint{O}
\tkzDrawArc(O,A)(C)

\tkzDefPoint(11.6,0.6){A}
\tkzDefPoint(16.8,1.2){B}
\tkzDefPoint(23.5,3.9){C}
\tkzDefPoint(11.6,23.5){O}
\tkzDrawArc(O,A)(C)

\tkzDefPoint(11.6,0.6){C}
\tkzDefPoint(2.2,5.5){A}
\tkzDefPoint(19,26.5){O}
\tkzDrawArc(O,A)(C)

\tkzDefPoint(0.2,11.8){A}
\tkzDefPoint(1,8.6){B}
\tkzDefPoint(2.2,5.5){C}
\tkzCircumCenter(A,B,C)\tkzGetPoint{O}
\tkzDrawArc(O,A)(C)

\foreach \i in {(2.2,5.5), (19,24.5), (11.6,0.6), (11.6,25.5), (23.5,3.9), (0.2,11.8), (24.5,16.1), (5,20), (2.9,17.25)}
  \fill[black] \i circle (0.2cm);
\end{tikzpicture}
\end{center}
\caption{The frame of Reuleaux polygon is not necessarily a regular star polygon.}\label{f4}
\end{figure}

In~\cite[p.~230]{BMP05} it was stated that the maximum number of times that the largest distance occurs among $n\neq 2$ points in the plane is $n$. All such extremal configurations were described without a reference for the proof. In the next theorem formulated in terms of ordinal spaces we present an idea for an independent proof of the given assertion. It is based on the unproved  Conjectures~\ref{c3} and~\ref{c4}, which are of some particular interest beyond the aim of this paper.

\begin{thm}\label{t22}
Let $(X,\delta)$ be a finite ordinal space. If $X\hookrightarrow \RR^2$,
then $|\DP(X)|\leqslant |X|$. Moreover, if $\Phi\colon X\hookrightarrow \RR^2$ is an embedding and $|\DP(X)|=|X|$, then $S^{d}$ is the union of $2k+1$-star polygon (diameters of some Reuleaux polygon) with $n-2k-1$ line segments, where $S=\Phi(X)$, $d=\diam \Phi(X)$, and $n=|X|$.
\end{thm}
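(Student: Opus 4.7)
The plan is to translate the ordinal statement into the classical planar-distance question: among $n$ points of $\RR^2$ the maximum distance occurs at most $n$ times. Let $\Phi\colon X\to\RR^2$ be the embedding, $S=\Phi(X)$, and $d=\diam S$. By the embedding equivalences~(\ref{e221})--(\ref{e223}), a pair $\{x,y\}\subseteq X$ lies in $\DP(X)$ if and only if $\|\Phi(x)-\Phi(y)\|=d$, so $|\DP(X)|$ coincides with the number of unordered pairs of $S$ realizing the Euclidean distance $d$. I would then form the \emph{diameter graph} $G$ whose vertex set is $S$ and whose edges are precisely these extremal pairs; then $|\DP(X)|=|E(G)|$ and $S^{d}$ is the geometric realization of $G$ as a union of segments.

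My first step would be the Hopf--Pannwitz intersection lemma: any two edges of $G$ either share a vertex or meet in their relative interiors. The argument is by contradiction. If $\{a,b\}$ and $\{c,d\}$ are vertex-disjoint edges whose segments are disjoint, then since $d$ is the maximum distance each of $c,d$ lies in the intersection of the closed disks of radius $d$ centred at $a$ and $b$. A short case split on the sides of the line $ab$ occupied by $c$ and $d$, and on the relative order of the four points, always produces a pair among $\{a,b,c,d\}$ at distance strictly larger than $d$, contradicting the maximality of $d$. Having the lemma, the standard bound that a graph drawn in the plane with pairwise intersecting edges satisfies $|E|\leqslant |V|$ immediately gives $|\DP(X)|\leqslant |X|$, which is the first assertion.

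For the equality case $|\DP(X)|=|X|$, my plan is to analyse the extremal configurations of $G$. Here I would invoke Conjectures~\ref{c3} and~\ref{c4}: the combination of the pairwise-intersection property with the rigid fact that every edge has the same length $d$ forces the ``core'' of mutually crossing edges to consist of the $2k+1$ long diagonals joining the $2k+1$ vertices of a Reuleaux polygon of width $d$, i.e.\ to form its frame in the sense defined just before the theorem. The remaining $n-2k-1$ edges of $G$ must then appear as pendant segments of length $d$ attached at vertices of this polygon, yielding precisely the asserted description of $S^{d}$.

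The main obstacle is the extremal characterization. The bound $|\DP(X)|\leqslant |X|$ follows routinely once the intersection lemma is established, but identifying the mutually crossing core with the frame of a Reuleaux polygon requires a delicate rigidity argument: constant-length pairwise-crossing edges impose strong constraints on the configuration of their endpoints, and these constraints are precisely what Conjectures~\ref{c3} and~\ref{c4} encode. Ruling out non-Reuleaux extremal configurations and showing that all ``excess'' edges can only attach as pendants is the technical heart of the proof, and is left as an open problem in the paper.
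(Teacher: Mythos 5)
Your proposal is correct in outline and reaches the same conclusion, but it takes a genuinely different route to the inequality $|\DP(X)|\leqslant |X|$. The paper's sketch splits on whether the diameter graph $S^d$ contains a closed polygonal chain: if not, $S^d$ is a forest and $|E|<|V|$ gives strict inequality; if so, it invokes Conjecture~\ref{c3} to identify the chain with the frame of a Reuleaux polygon, Conjecture~\ref{c4} to show this chain is unique, and then argues that all remaining diameters are attached to the frame, from which both the bound and the extremal description follow. You instead prove the bound via the classical Hopf--Pannwitz mechanism: any two diameters share an endpoint or cross, and a geometric graph with no two disjoint edges has at most $n$ edges. This buys you something real --- your first assertion is unconditional and rests on a genuine classical theorem, whereas in the paper's version even the inequality is conditional on the two unproved conjectures whenever $S^d$ contains a cycle. (One caveat: the step from ``pairwise intersecting edges'' to $|E|\leqslant|V|$ is a known theorem but not quite ``immediate''; it needs the rotating/degree argument showing that a middle neighbour of a degree-$\geqslant 3$ vertex has degree one.) For the equality case both arguments coincide: you, like the paper, must appeal to Conjectures~\ref{c3} and~\ref{c4} to force the mutually crossing core to be a $(2k+1)$-frame of a Reuleaux polygon with the remaining $n-2k-1$ diameters attached at its vertices, and neither version supplies a complete proof of that rigidity statement --- the paper explicitly labels its argument a sketch resting on these open conjectures. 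You also leave implicit, as does the paper, the final count showing that equality forces exactly $n-2k-1$ attached segments, i.e.\ that every non-frame point of $S$ is the endpoint of exactly one diameter.
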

\begin{proof}[\textbf{Sketch of the proof}] Let $X\hookrightarrow \RR^2$. Suppose first that $S^{d}$ contains some closed polygonal chain $Ch$.
\begin{con}\label{c3}
If the diameter of a closed equilateral polygonal chain is equal to the length of a line segment of this chain, then this chain is a  frame of some Reuleaux polygon.
\end{con}
\begin{cor}
If the diameter of a closed equilateral polygonal chain is equal to the length of a line segment of this chain, then the number of line segments of this chain is odd.
\end{cor}
Since $d=\diam \Phi(X)$, we have that $d$ is the diameter of $S^d$ and according to Conjecture~\ref{c3} the chain $Ch$ is a frame of some Reuleaux polygon.
\begin{con}\label{c4}
Let $F_1$ and $F_2$  be frames of some Reuleaux polygons with $\diam (F_1)=\diam (F_2)=d$ lying nonidentically in Euclidean plane. Then $\diam (F_1 \cup F_2)>d$.
\end{con}
According to Conjecture~\ref{c4} the chain $Ch$ is a single closed polygonal chain in $S^d$. Suppose that the number of its line segments is $2k+1$, where $k$ is an integer such that $2k+1\leqslant n$. It is easy to show that for every line segment which  belongs to $S^d$ and does not belong to $Ch$ the following property holds: one of endpoints of this segment coincides with some vertex of $Ch$ and another one lies at the opposite circular arc of Reuleaux polygon having the frame $Ch$, see dotted lines at Figure~\ref{f4}. In this case the inequality $|\DP(X)|\leqslant|X|$ follows easily and the equality is attained if and only if the number of the above mentioned line segments is equal to $n-2k-1$.

Suppose now that $S^{d}$ does not contain any closed polygonal chain. In this case we can consider $S^{d}$ as a graph $G=(V,E)$ for which the vertex set $V$ coincides with the vertex set of  $S^{d}$ and the set of edges $E$ coincides with the set of line segments of $S^{d}$. It is clear that $G$ is a forest and the inequality $|E|<|V|$ holds. Hence the inequality $|\DP(X)|<|X|$ follows.
\end{proof}

The problem of finding the minimal number of different distances determined by a set of $n$ points in the plane was formulated by P. Erd\H{o}s in~\cite{Er46},  where he gave the first estimation for this value. His result was improved several times and at the present day the best estimation is found in~\cite{Ch92}. Estimations of the numbers of second largest, second smallest and smallest distances for $n$ points in the plane are found in \cite{Ve87}, \cite{Br92} and \cite{Ha75}, respectively. These results and Theorem~\ref{t22} give us necessary conditions for the embedding of ordinal spaces in $\RR^2$.

\begin{prop}\label{p24}
Let $(X,\delta)$ be a finite ordinal space, $|X|=n\geqslant 2$, $X \hookrightarrow \RR^2$, and let $\Delta(X)=
\{\delta_1,\delta_2,\ldots,\delta_k\}$ with
      $$
      \delta_1>\delta_2>\ldots >\delta_k.
      $$
Then the following conditions hold:
\begin{enumerate}
  \item [(i)] $|\Delta(X)|\geqslant n^{4/5}/(\log n)^c$ for  some $c>0$ for all $n\geqslant 10$.
  \item [(ii)] $|\delta_1|\leqslant n$.
  \item [(iii)] $|\delta_2|\leqslant \lfloor \frac32n \rfloor$.
  \item [(iv)] $|\delta_{k-1}|<  \frac{24}{7}n $.
  \item [(v)] $|\delta_k|\leqslant \lfloor 3n-\sqrt{12n-3} \rfloor$.
\end{enumerate}
\end{prop}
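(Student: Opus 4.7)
The plan is to reduce each of (i)--(v) to a known extremal result about point sets in the Euclidean plane, by transferring the ordinal structure of $X$ through an embedding $\Phi\colon X\hookrightarrow\RR^2$. First I would observe that under $\Phi$ the equivalence classes of pairs in $\Delta(X)$ correspond bijectively, in an order-preserving way, to the distinct Euclidean distances realized by the $n$-point set $S=\Phi(X)$. Concretely, for each class $\delta_i$ there is a single real $r_i=d(\Phi(x),\Phi(y))$ for any representative $\{x,y\}\in\delta_i$, with $r_1>r_2>\cdots>r_k$, and $|\delta_i|$ equals the number of unordered pairs in $S$ at distance $r_i$. Thus $|\Delta(X)|$ is the number of distinct distances in $S$, $|\delta_1|$ is the number of diameter-pairs, $|\delta_2|$ the number of second-largest-distance pairs, $|\delta_{k-1}|$ the number of second-smallest-distance pairs, and $|\delta_k|$ the number of shortest-distance pairs.

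Once this dictionary is in place, each claim follows by citing the appropriate planar combinatorial-geometry estimate. For (i) I would invoke Chung's lower bound from \cite{Ch92}, which gives at least $n^{4/5}/(\log n)^c$ distinct distances among any $n$ planar points. For (ii) I would simply refer to Theorem~\ref{t22} of the present paper, which bounds the number of diametrical pairs by $n$. For (iii) I would use Vesztergombi's bound $\lfloor 3n/2\rfloor$ from \cite{Ve87} on the number of occurrences of the second largest distance. For (iv), Brass's bound $\frac{24}{7}n$ from \cite{Br92} on the second smallest distance applies. For (v), Harborth's bound $\lfloor 3n-\sqrt{12n-3}\rfloor$ from \cite{Ha75} on the number of times the minimum distance occurs yields the claim.

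Since these external estimates are being treated as black boxes, the only nontrivial step is the first one: verifying that the embedding $\Phi$ really does set up an order-preserving bijection between $\Delta(X)$ and the set of distinct distances in $\Phi(X)$, so that cardinalities on the ordinal side match cardinalities on the metric side. This is immediate from the definition of $X\hookrightarrow\RR^2$ via \eqref{e221}--\eqref{e223}: each of the three relations $<,=,>$ on pairs of pairs is preserved, so equal $\delta$-values correspond exactly to equal Euclidean distances, and the linear order on $\Delta(X)$ is carried onto the natural order on the set of distances. I do not expect any genuine obstacle; the work is entirely in the statement-to-statement translation, and each inequality follows from one cited result.
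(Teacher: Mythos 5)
Your proposal is correct and follows exactly the route the paper intends: the paper gives no separate proof of Proposition~\ref{p24}, deriving it directly from the preceding paragraph's citations of \cite{Ch92}, \cite{Ve87}, \cite{Br92}, \cite{Ha75} together with Theorem~\ref{t22}, once the embedding identifies the classes $\delta_1>\cdots>\delta_k$ with the distinct distances of $\Phi(X)$ in decreasing order. Your explicit verification of that order-preserving dictionary is the only step the paper leaves implicit, and you have it right.
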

The asymptotic behavior of the number of distinct distances determined by a set of $n$ points in three and higher dimensional spaces were found in~\cite{Ar04}.
It was proved in~\cite{Gr56,He56,St57} that in a three-dimensional space the maximum number of largest distances among $n$ points is $2n-2$. The corresponding results provide necessary conditions for embeddability of ordinal spaces in three and higher dimensional Euclidean spaces.

A semimetric space $X$ with $|X|=n+1$ is said to be \emph{irreducibly embeddable} in $\RR^n$ if it is isometric to an independent $(n+1)$-subset of $\RR^n$.
Recall that the Cayley--Menger determinant is the polynomial
$$
D_{k}(x_0,x_1,\ldots ,x_k)=
\begin{vmatrix}
    0&1&1&\ldots &1\\
    1&0&d^{2}(x_0,x_1)&\ldots &d^{2}(x_0,x_k)\\
    1&d^{2}(x_1,x_0)&0&\ldots &d^{2}(x_1,x_k)\\
    \vdots&\vdots&\vdots&\ddots&\vdots\\
    1&d^{2}(x_k,x_0)&d^{2}(x_k,x_1)&\ldots &0\\
\end{vmatrix},
$$ where $x_0,x_1,\ldots ,x_k$ are some points of a semimetric space $(X,d)$.

To prove Proposition~\ref{p10} we shall use the following theorem of  L. Blumenthal, see \cite[p.100]{Bl}.
\begin{thm}
A necessary and sufficient condition that a semimetric space $X$ with $|X|=n+1$ and $X=\{x_0,x_1,\ldots ,x_n\}$ is irreducibly embeddable in $\RR^n$ is that
\begin{equation}\label{sgn}
\sgn D_k(x_0,x_1,\ldots ,x_k)=(-1)^{k+1}, \quad k=1,2,\ldots ,n.
\end{equation}
\end{thm}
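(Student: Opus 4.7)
My plan is to reduce the theorem to the classical Cayley--Menger identity
$$D_k(x_0,\ldots,x_k) = (-1)^{k+1}\,2^k\,(k!)^2\,V_k^2,$$
where $V_k$ denotes the $k$-dimensional volume of the simplex spanned by $x_0,\ldots,x_k$ in any Euclidean realization. This identity can be derived by elementary row and column manipulations on the Cayley--Menger matrix, reducing it to a Gram determinant of the vectors $x_i-x_0$ for $1\leq i\leq k$, whose value equals $(k!)^2 V_k^2$. Granting the identity, the sign condition $\sgn D_k=(-1)^{k+1}$ is equivalent to $V_k^2>0$, i.e., to affine independence of any realization of $\{x_0,\ldots,x_k\}$.

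For necessity, if $X$ is irreducibly embedded in $\RR^n$, the full image is affinely independent, and hence so is every one of its subsets; the squared volume of each $k$-subsimplex is then strictly positive, and the required sign of each $D_k$ follows at once from the identity.

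For sufficiency I would build an isometric embedding inductively. The base case $k=1$ is immediate: $\sgn D_1>0$ forces $d(x_0,x_1)>0$, so I would place $\tilde x_0$ at the origin and $\tilde x_1$ at distance $d(x_0,x_1)$ along the first coordinate axis of $\RR^n$. For the inductive step, suppose $\tilde x_0,\ldots,\tilde x_{k-1}$ already realize the prescribed distances and span a $(k-1)$-dimensional affine subspace of $\RR^n$ of positive $(k-1)$-volume. The sought position of $\tilde x_k$ admits a unique orthogonal projection onto this affine hull, determined by solving the linear system for its barycentric coordinates; the coefficient matrix of this system is nonsingular precisely because $D_{k-1}\neq 0$. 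A short computation using the identity shows that the square of the remaining perpendicular height equals a positive constant times $-D_k/D_{k-1}$, and the two sign conditions $\sgn D_k=(-1)^{k+1}$, $\sgn D_{k-1}=(-1)^{k}$ force this to be strictly positive. One may therefore lift $\tilde x_k$ into a fresh coordinate direction of $\RR^n$, preserving affine independence and all distances.

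The main obstacle will be the careful verification of the volume--determinant identity together with the perpendicular-height formula; once these are in hand the induction runs cleanly, and the nonvanishing of each $D_{k-1}$ is exactly what is needed both to solve uniquely for the projection and to guarantee a strictly positive squared height at every step.
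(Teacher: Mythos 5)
This statement is quoted in the paper as a classical theorem of Blumenthal, with a citation to \cite[p.~100]{Bl}; the paper supplies no proof of it, so there is nothing internal to compare your argument against. On its own terms your sketch is the standard textbook route and is sound. The identity $D_k=(-1)^{k+1}2^k(k!)^2V_k^2$, the immediate necessity argument, and the inductive placement of $\tilde x_k$ via its projection onto the affine hull of $\tilde x_0,\ldots,\tilde x_{k-1}$ followed by a lift into a fresh coordinate are exactly how the classical proof goes; your height formula is correct (with the constant $h^2=-D_k/(2D_{k-1})$), and the alternating sign hypotheses are precisely what make each $D_{k-1}$ nonzero and each $h^2$ strictly positive, so the induction closes after $n$ steps inside $\RR^n$. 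The one point that deserves explicit care, which you have already flagged, is that in the sufficiency direction the height formula must be established for the \emph{abstract} semimetric data, not merely for an already realized configuration: $h^2$ is defined as $d^2(x_k,x_0)-\|y\|^2$ with $y$ the solution of the linear system, and one must verify by a bordered-determinant (Schur complement) computation on the Cayley--Menger matrix that this rational function of the prescribed squared distances equals $-D_k/(2D_{k-1})$. Once that algebraic identity is in place, the argument is complete, and by construction $\|\tilde x_k-\tilde x_i\|^2=\|y-\tilde x_i\|^2+h^2=d^2(x_k,x_i)$ for all $i$, so the embedding is isometric and affinely independent as required.
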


\begin{prop}\label{p10}
For any ordinal space $X$ with $|X|=n+1$ there is an embedding $\Phi\colon X \to \mathbb R^n$, such that $\Phi(X)$ is an independent subset of $\RR^n$.
\end{prop}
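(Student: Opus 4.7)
The plan is to realize $(X,\delta)$ by a metric which is a small perturbation of the regular simplex and then invoke the Blumenthal theorem stated just above. The key observation is that the regular configuration of $n+1$ points with all pairwise distances equal to $1$ sits independently in $\RR^n$, so Blumenthal's theorem (in the trivial direction) guarantees that for this configuration the Cayley--Menger determinants $D_1, D_2, \ldots, D_n$ are nonzero with the prescribed signs $(-1)^{k+1}$. Each $D_k$ is a polynomial, hence continuous function, in the squared pairwise distances, so the sign pattern persists under sufficiently small perturbations of the distance function.

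First, I would enumerate the nonzero elements of $L$ in increasing order as $l_1<l_2<\cdots<l_m$ (with $m\leqslant\binom{n+1}{2}$). Given a parameter $\ve\in(0,1)$ to be chosen, pick any strictly increasing sequence $1=r_1<r_2<\cdots<r_m\leqslant 1+\ve$ and define
\[
d(x,y)=
\begin{cases}
0, & x=y, \\
r_i, & x\neq y \text{ and } \delta(x,y)=l_i.
\end{cases}
\]
By construction $d$ is symmetric, vanishes precisely on the diagonal, and has ordinal type $(X,\delta)$ in the sense of~(\ref{eq1}). Since every positive value of $d$ lies in $[1,1+\ve]\subset[1,2)$, the triangle inequality holds automatically, so $(X,d)$ is a metric realization of $(X,\delta)$.

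Second, I would apply the continuity argument. When all $r_i$ equal $1$ the matrices entering the determinants in condition~(\ref{sgn}) are exactly those of the regular simplex, for which these $n$ determinants are nonzero with signs $(-1)^{k+1}$ as explained above. Since the entries of the Cayley--Menger matrices depend continuously on the $r_i$, there exists $\ve_0>0$ such that condition~(\ref{sgn}) still holds for the metric $d$ whenever $\ve\leqslant\ve_0$. Blumenthal's theorem then gives an isometric embedding $\Phi\colon (X,d)\to\RR^n$ onto an independent $(n+1)$-point subset; because $d$ realizes $(X,\delta)$, the map $\Phi$ satisfies relations~(\ref{e221})--(\ref{e223}) and is the desired embedding of the ordinal space.

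The whole argument hinges on the simultaneous non-vanishing, with the correct signs, of the $n$ Cayley--Menger determinants at the regular simplex. This is the only potentially delicate point, but it requires no computation: the regular simplex is manifestly an independent subset of $\RR^n$, so Blumenthal's theorem in its easy direction supplies the sign conditions for free, and the perturbative argument then transfers them to any ordinal structure on $n+1$ points.
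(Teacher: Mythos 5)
Your proposal is correct and follows essentially the same route as the paper: realize $(X,\delta)$ by a metric whose nonzero distances all lie in a short interval (the paper obtains this from Proposition~\ref{p1}, you construct it directly, and in both cases the triangle inequality is automatic), and then use continuity of the Cayley--Menger determinants in the distances together with Blumenthal's criterion to get irreducible embeddability for a sufficiently small perturbation. The only divergence is at the base configuration: the paper explicitly computes $P_k(a,\ldots,a)=(-1)^{k+1}(k+1)a^{2k}$, whereas you deduce the sign pattern for the regular simplex from the necessity direction of Blumenthal's theorem, which is a legitimate (and computation-free) shortcut.
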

\begin{proof}
To prove this proposition apply Proposition~\ref{p1} to a finite $(X,\delta)$ where $X=\{x_0,x_1,\ldots ,x_n\}$. Let $(X,d)$ be a realization of  $(X,\delta)$ depending on parameters $a$ and $\ve$ as in the proof of Proposition~\ref{p1}. It is easy to see that $D_k(x_0,x_1,\ldots ,x_k)$ is a polynomial $P_k(d(\cdot,\cdot),\ldots ,d(\cdot,\cdot))$ depending on $d(x_i,x_j)$, $0\leqslant i,j\leqslant k$, $i\neq j$, and that $d(x_i,x_j)\to a$ as $\ve\to 0$ for $i\neq j$. Moreover, it is clear that $$
\lim\limits_{d(x_i,x_j)\to a}P_k(d(\cdot,\cdot),\ldots ,d(\cdot,\cdot))= P_k(a,\ldots ,a).
$$
Hence in order to prove~(\ref{sgn}) it suffices to establish the equality $\sgn P_k(a,\ldots ,a)=(-1)^{k+1}$, $k=1,2,\ldots ,n$. Now
$$
P_k(a,\ldots ,a)=
\left|
  \begin{array}{cccccc}
    0 & 1 & 1 &  \ldots & 1 \\
    1 & 0 & a^2&  \ldots & a^2\\
    1 & a^2& 0 &  \ldots & a^2\\
    \vdots &  \vdots & \vdots & \ddots & \vdots\\
    1 & a^2& a^2&  \ldots & 0 \\
  \end{array}
\right| =
\left|
  \begin{array}{cccccc}
    0 & 1 & 1 &  \ldots & 1 \\
    1 & -a^2& 0 &  \ldots & 0 \\
    1 & 0 & -a^2&  \ldots & 0 \\
    \vdots &  \vdots & \vdots & \ddots & \vdots \\
    1 & 0 & 0 &  \ldots & -a^2\\
  \end{array}
\right|
$$
$$
=\left|
  \begin{array}{cccccc}
    0 & 1 & 1 &  \ldots & 1 \\
    0 & -a^2& 0 &  \ldots & a^2\\
    0 & 0 & -a^2&  \ldots & a^2\\
    \vdots &  \vdots & \vdots & \ddots & \vdots \\
    1 & 0 & 0 &  \ldots & -a^2\\
  \end{array}
\right|=
\left|
  \begin{array}{cccccc}
    0 & 1 & 1 &  \ldots & k+1 \\
    0 & -a^2& 0 &  \ldots & 0 \\
    0 & 0 & -a^2&  \ldots & 0 \\
    \vdots &  \vdots & \vdots & \ddots & \vdots \\
    1 & 0 & 0 & \ldots & -a^2\\
  \end{array}
\right|
 = (-1)^{k+1}(k+1)a^{2k}.
$$
In other words, the realization $(X,d)$ of an ordinal space $(X,\delta)$ is irreducibly embeddable in $\RR^n$ for sufficiently small $\ve$.
\end{proof}

\section{Distance between ordinal spaces}\label{dist}

\noindent Usually, the concept of a metric is associated with the distance between points of a certain space. But in mathematics there are a lot of metrics defined not on points but on completely different mathematical objects. A large number of distances is collected in~\cite{DD16}.
Among these distances one can distinguish distances on graphs, matrices, strings and permutations, etc. Below we propose a distance defined on classes of equivalences of ordinal spaces with a fixed number of points. The most similar metric to this one is the well known Gromov--Hausdorff metric, which is also a metric on classes of spaces. The definition of the Gromov--Hausdorff distance can be found, e.\,g., in~\cite[p. 254]{BBI}.

Let $(X,\delta_X)$  and $(Y,\delta_Y)$ be ordinal spaces with $|X|=|Y|<\infty$. Define the distance between $X$ and $Y$ as
\begin{equation}\label{eq2}
d_{ord}(X,Y)=\min\limits_{f}\frac{1}{8}\Bigl|\{{(x,y,z,w)\, |\, \delta_X(x,y,z,w)}\neq\delta_Y(f(x),f(y),f(z),f(w))\}\Bigr|,
\end{equation}
where $f\colon X\to Y$ is a bijection and $(x,y,z,w)$ is an ordered set of points \mbox{$x,y,z,w \in X$.}

\begin{rem}
The geometric sense of the distance $d_{ord}$ is the minimal number of relations between two pairs of points which are not preserved under any bijection. We need the factor $\frac{1}{8}$
since
\begin{equation}
\begin{split}
\delta_X(x,y,z,w)=\delta_X(x,y,w,z)=\delta_X(y,x,z,w)=\delta_X(y,x,w,z)\\
=-\delta_X(z,w,x,y)=-\delta_X(w,z,x,y)=-\delta_X(z,w,y,x)=-\delta_X(w,z,y,x).
\end{split}
\end{equation}
\end{rem}

For an ordinal space $X$ we denote by $\operatorname{is}(X)$ the \emph{isomorphic type} of this space, i.e., $\operatorname{is}(X)$ is the class of all ordinal spaces that are isomorphic to $X$. Under the distance $d_{ord}(\is(X),\is(Y))$ between isomorphic types $\is(X)$ and $\is(Y)$  we understand the distance $d_{ord}(X,Y)$ for any $X\in \is(X)$ and any $Y\in \is(Y)$.

\begin{thm}\label{p3}
$d_{ord}$ is a metric on the set of isomorphic types of finite ordinal spaces with a fixed number of points.
\end{thm}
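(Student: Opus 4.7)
The plan is to verify the four defining properties of a metric in turn, with the triangle inequality being the substantive step and well-definedness requiring a brief check against choice of representatives.

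First, I would show that $d_{ord}$ is well-defined on isomorphy classes. Given $X\cong X'$ via a bijection $\Phi_1\colon X'\to X$ satisfying~(\ref{e2.1*}) and $Y\cong Y'$ via $\Phi_2\colon Y\to Y'$, for every bijection $f\colon X\to Y$ the composition $f'=\Phi_2\circ f\circ\Phi_1\colon X'\to Y'$ is a bijection, and the symmetry of isomorphisms under the $\delta$-relation yields
\[
\delta_X(x,y,z,w)=\delta_Y(f(x),f(y),f(z),f(w))\iff\delta_{X'}(x',y',z',w')=\delta_{Y'}(f'(x'),\ldots,f'(w')),
\]
where $x'=\Phi_1^{-1}(x)$, etc. Consequently the set whose cardinality is minimized in~(\ref{eq2}) is the same (up to the bijection $\Phi_1^{\times 4}$) for any choice of representative. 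The factor $\tfrac{1}{8}$ returns an integer since the Remark's symmetry relations partition the ``bad'' ordered 4-tuples into orbits of size $8$.

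Next, the \emph{identity of indiscernibles} is immediate from Definition~\ref{d3.1}: $d_{ord}(\is(X),\is(Y))=0$ iff some bijection $f\colon X\to Y$ preserves $\delta$ on every 4-tuple, which is precisely the condition $X\cong Y$, i.e., $\is(X)=\is(Y)$. \emph{Symmetry} follows from the observation that $f\colon X\to Y$ is a bijection iff $f^{-1}\colon Y\to X$ is, and, by axiom (iii) of Definition~\ref{d1.1} (or directly from Definition~\ref{d2.3}), the set of 4-tuples on which $f$ fails is mapped bijectively by $f^{\times 4}$ onto the set of 4-tuples on which $f^{-1}$ fails.

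For the \emph{triangle inequality}, fix representatives $X,Y,Z$ with $|X|=|Y|=|Z|$, and let $f\colon X\to Y$ and $g\colon Y\to Z$ be bijections. Consider the composition $g\circ f\colon X\to Z$ and the sets
\begin{align*}
A&=\{(x,y,z,w)\colon\delta_X(x,y,z,w)\neq\delta_Y(f(x),f(y),f(z),f(w))\},\\
B&=\{(u,v,p,q)\colon\delta_Y(u,v,p,q)\neq\delta_Z(g(u),g(v),g(p),g(q))\},\\
C&=\{(x,y,z,w)\colon\delta_X(x,y,z,w)\neq\delta_Z((gf)(x),(gf)(y),(gf)(z),(gf)(w))\}.
\end{align*}
If $(x,y,z,w)\notin A$ and $(f(x),f(y),f(z),f(w))\notin B$, then both $\delta$-equalities hold and thus $(x,y,z,w)\notin C$. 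Contrapositively $C\subseteq A\cup (f^{\times 4})^{-1}(B)$, and since $f$ is a bijection $|(f^{\times 4})^{-1}(B)|=|B|$. Hence $|C|\leqslant |A|+|B|$, so
\[
d_{ord}(X,Z)\;\leqslant\;\tfrac{1}{8}|C|\;\leqslant\;\tfrac{1}{8}|A|+\tfrac{1}{8}|B|.
\]
Choosing $f$ and $g$ to realize the minima defining $d_{ord}(X,Y)$ and $d_{ord}(Y,Z)$ yields
$d_{ord}(X,Z)\leqslant d_{ord}(X,Y)+d_{ord}(Y,Z)$.

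The main (minor) obstacle is the bookkeeping for well-definedness and the symmetry check: one must carefully track how the 4-ary relation transforms under composition with isomorphisms and under the eight-fold internal symmetry $\delta_X(x,y,z,w)=\delta_X(y,x,z,w)=\cdots=-\delta_X(w,z,y,x)$, to be sure the minimum in~(\ref{eq2}) behaves as claimed; once this is done, the triangle inequality is a routine inclusion of ``bad set'' arguments.
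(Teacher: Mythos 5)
Your proof is correct and follows essentially the same route as the paper's: both establish the triangle inequality by composing the two optimal bijections and bounding the ``bad'' set of the composition by the union of the bad set of $f$ with the pullback under $f^{\times 4}$ of the bad set of $g$, then using subadditivity of cardinality. The only difference is that you explicitly verify well-definedness on isomorphism classes and the integrality of the $\tfrac{1}{8}$ factor, points the paper leaves implicit; this is a welcome addition rather than a different argument.
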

\begin{proof}
The symmetry of $d_{ord}$ and the equivalence $((d_{ord}(\is(X),\is(Y))=0)\Leftrightarrow (\operatorname{is}(X)=\operatorname{is}(Y)))$ are almost evident.

Let us prove the triangle inequality. Let $d_{ord}(X,Y)$ and $d_{ord}(Y,Z)$ be distances with minima attained in~(\ref{eq2}) at bijections $f_{1}\colon X\to Y$ and $f_{2}\colon Y\to Z$ respectively. Define
\begin{equation*}
\Omega_X^1=\{{(x,y,z,w)\, |\, \delta_X(x,y,z,w)}\neq\delta_Y(f_1(x),f_1(y),f_1(z),f_1(w))\},
\end{equation*}
where $(x,y,z,w)$ is an ordered set of points $x,y,z,w \in X$, and define
\begin{equation*}\label{eq3}
\Omega_Y^1=\{{(x,y,z,w)\, |\, \delta_Y(x,y,z,w)}\neq\delta_Z(f_2(x),f_2(y),f_2(z),f_2(w))\},
\end{equation*}
where $(x,y,z,w)$ is an ordered set of points \mbox{$x,y,z,w \in Y$.}
It is clear that
\begin{equation}\label{eq44}
d_{ord}(X,Y)=\frac{1}{8}|\Omega_X^1|\, \ \text{ and } \, \ d_{ord}(Y,Z)=\frac{1}{8}|\Omega_Y^1|.
\end{equation}
Define
\begin{equation}\label{eq5}
\Omega=\Omega_X^1\cup f^{-1}_1(\Omega_Y^1),
\end{equation}
where
\begin{eqnarray*}
&&f^{-1}_1(\Omega_Y^1)=\\
&&\left\{\big(f^{-1}(x),f^{-1}(y),f^{-1}(z),f^{-1}(w)\big) \, | \, \delta_Y(x,y,z,w)\neq \delta_Z\big(f_2(x),f_2(y),f_2(z),f_2(w)\big)\right\}.
\end{eqnarray*}

Consider the composition $f=f_2\circ f_1\colon X \to Z$ and put
\begin{equation*}
d_f(X,Z)=\frac{1}{8}\Bigl|\{{(x,y,z,w)\, |\, \delta_X(x,y,z,w)}\neq\delta_Z(f(x),f(y),f(z),f(w))\}\Bigr|,
\end{equation*}
where $x,y,z,w \in X$.
The geometric sense of the set $\Omega$ is the following: $\Omega$ is the set of ordered groups of four points from $X$ for which the mapping $f$ in general does not preserve the relations.
Using the definition of $d_{ord}(X,Z)$,~(\ref{eq5}) and~(\ref{eq44}), we obtain
\begin{multline}
 d_{ord}(X,Z)\leqslant d_f(X,Z)\leqslant \frac{1}{8}|\Omega|\leqslant \frac{1}{8}|\Omega_X^1|+ \frac{1}{8} |f^{-1}(\Omega_Y^1)|\\
 =\frac{1}{8}|\Omega_X^1|+ \frac{1}{8} |\Omega_Y^1|=d_{ord}(X,Y)+d_{ord}(Y,Z).
\end{multline}
\end{proof}

\begin{rem}
The formulation of Theorem~\ref{p3} holds also for ordinal structure without the axioms of transitivity (iv), (v) and (vi).
\end{rem}

\section*{Acknowledgements}
The second author was supported by H2020-MSCA-RISE-2014, Project number 645672 (AMMODIT: ``Approximation Methods for Molecular Modelling and Diagnosis Tools''). The research of the second author was also partially supported by Project 0117U006353 from the Department of Targeted Training of Taras Shevchenko National University of Kyiv at the NAS of Ukraine and by the National Academy of Sciences of Ukraine within scientific research works for young scientists, Project 0117U006050.

\end{document}